\documentclass[a4paper,11pt]{amsart}

\usepackage[T1]{fontenc}	


\usepackage[utf8]{inputenc}
\usepackage[T1]{fontenc}				
\usepackage{amsmath,amssymb,amsthm,amsbsy}
\usepackage{mathtools} 					
\usepackage{microtype} 					
\usepackage{bbm}					
\usepackage{bm}						
\usepackage[shortcuts]{extdash} 			
\usepackage{color}
\usepackage{enumerate}
\usepackage{graphicx}    
\usepackage{xspace}					
\usepackage{subcaption}					
\usepackage[colorlinks=true,linkcolor=black, citecolor=black]{hyperref}  
\usepackage{mparhack}					
\usepackage{accents}
\usepackage[mode=image|tex]{standalone}

\usepackage{tikz}
  \usetikzlibrary{arrows,matrix,decorations.markings,patterns,cd}
  \tikzset{
    >=latex,
    bnode/.style={circle,fill=black,draw=black},
    middlearrow/.style={	
	  decoration={markings, mark= at position 0.55 with {\arrow{#1}}},
	  postaction={decorate}
	  },
    double arrow/.style args={#1 colored by #2 and #3}{ 
	  -stealth,line width=#1,#2, 
	  postaction={draw,-stealth,#3,line width=(#1)/3,
                shorten <=(#1)/3,shorten >=2*(#1)/3}, 
	  }
	  
  }
\tikzset{
        hatch distance/.store in=\hatchdistance,
        hatch distance=10pt,
        hatch thickness/.store in=\hatchthickness,
        hatch thickness=2pt
    }
    \makeatletter
    \pgfdeclarepatternformonly[\hatchdistance,\hatchthickness]{flexible hatch}
    {\pgfqpoint{0pt}{0pt}}
    {\pgfqpoint{\hatchdistance}{\hatchdistance}}
    {\pgfpoint{\hatchdistance-1pt}{\hatchdistance-1pt}}%
    {
        \pgfsetcolor{\tikz@pattern@color}
        \pgfsetlinewidth{\hatchthickness}
        \pgfpathmoveto{\pgfqpoint{0pt}{0pt}}
        \pgfpathlineto{\pgfqpoint{\hatchdistance}{\hatchdistance}}
        \pgfusepath{stroke}
    }
    
    \pgfdeclarepatternformonly[\hatchdistance,\hatchthickness]{flexible revhatch}
    {\pgfqpoint{0pt}{0pt}}
    {\pgfqpoint{\hatchdistance}{-\hatchdistance}}
    {\pgfpoint{\hatchdistance-1pt}{\hatchdistance-1pt}}%
    {
        \pgfsetcolor{\tikz@pattern@color}
        \pgfsetlinewidth{\hatchthickness}
        \pgfpathmoveto{\pgfqpoint{0pt}{0pt}}
        \pgfpathlineto{\pgfqpoint{\hatchdistance}{-\hatchdistance}}
        \pgfusepath{stroke}
    }



		\newcommand{\NN}{\mathbb{N}}	
			
		\newcommand{\RR}{\mathbb{R}}	
\renewcommand{\SS}{\mathbb{S}}		\newcommand{\TT}{\mathbb{T}}

		\newcommand{\ZZ}{\mathbb{Z}}

\newcommand{\CG}{\mathcal{G}}

\newcommand{\CO}{\mathcal{O}}			
			
		\newcommand{\CT}{\mathcal{T}}








 \fboxrule0.0001pt \fboxsep0pt			


\makeatletter						
\DeclareFontFamily{OMX}{MnSymbolE}{}	
\DeclareSymbolFont{MnLargeSymbols}{OMX}{MnSymbolE}{m}{n}
\SetSymbolFont{MnLargeSymbols}{bold}{OMX}{MnSymbolE}{b}{n}
\DeclareFontShape{OMX}{MnSymbolE}{m}{n}{
    <-6>  MnSymbolE5    <6-7>  MnSymbolE6   <7-8>  MnSymbolE7
   <8-9>  MnSymbolE8   <9-10> MnSymbolE9  <10-12> MnSymbolE10
  <12->   MnSymbolE12	}{}
\DeclareFontShape{OMX}{MnSymbolE}{b}{n}{
    <-6>  MnSymbolE-Bold5   <6-7>  MnSymbolE-Bold6   <7-8>  MnSymbolE-Bold7
   <8-9>  MnSymbolE-Bold8   <9-10> MnSymbolE-Bold9  <10-12> MnSymbolE-Bold10
  <12->   MnSymbolE-Bold12  }{}
\let\llangle\@undefined
\let\rrangle\@undefined
\DeclareMathDelimiter{\llangle}{\mathopen}%
                     {MnLargeSymbols}{'164}{MnLargeSymbols}{'164}
\DeclareMathDelimiter{\rrangle}{\mathclose}%
                     {MnLargeSymbols}{'171}{MnLargeSymbols}{'171}
\makeatother

\DeclarePairedDelimiter\abs{\lvert}{\rvert}		
\DeclarePairedDelimiter\norm{\lVert}{\rVert}		
\DeclarePairedDelimiter\angles{\langle}{\rangle}	
\DeclarePairedDelimiter\aangles{\llangle}{\rrangle}	
\DeclarePairedDelimiter\paren{(}{)}			
\DeclarePairedDelimiter\brackets{[}{]}			
\DeclarePairedDelimiter\braces{\{}{\}}			

\makeatletter
\let\oldabs\abs	
\def\abs{\@ifstar{\oldabs}{\oldabs*}}			
\let\oldnorm\norm
\def\norm{\@ifstar{\oldnorm}{\oldnorm*}}
\makeatother

	\newcommand{\bigparen}[1]{\paren[\big]{#1}}
	\newcommand{\Bigparen}[1]{\paren[\Big]{#1}}
\newcommand{\bigbrack}[1]{\brackets[\big]{#1}}
\newcommand{\Bigbrack}[1]{\brackets[\Big]{#1}}
\newcommand{\bigbrace}[1]{\braces[\big]{#1}}	
\newcommand{\Bigbrace}[1]{\braces[\Big]{#1}}	
\newcommand{\bigmid}{\mathrel{\big|}}			



\newcommand{\dist}[1][]{{d^{#1}}}
\newcommand{\wdist}[1][]{{\delta^{#1}_\Gamma}} 
 
\newcommand{\wSdist}[1][]{{\delta^{#1}_{\hspace{-0.15 ex} S}}} 
\newcommand{\wTdist}[1][]{{\delta^{#1}_{\hspace{-0.15 ex} T}}}

\newcommand{\jp}{jumping\=/path\xspace}
\newcommand{\jps}{jumping\=/paths\xspace}

\newcommand{\jto}[1][]{\overset{#1}{\mathbin\rightsquigarrow}}
\newcommand{\thhom}{\mathrel{\sim_\theta}}
\newcommand{\frhom}{\mathrel{\sim_{f}}}
\newcommand{\jpgrp}{J_{\hspace{-0.4 ex}S}\Pi_1}
\newcommand{\shpa}{{\rm Sh_{path}}}
\newcommand{\shjp}{{\rm Sh_{jump}}}
\makeatletter
\def\thgrp{\@ifnextchar[{\@thgrpwith}{\@thgrpwithout}} 
\def\@thgrpwith[#1]{\pi_{1,#1}}
\def\@thgrpwithout{\pi_{1,\theta}}
\def\thhom{\@ifnextchar[{\@thhomwith}{\@thhomwithout}} 
\def\@thhomwith[#1]{\mathrel{\sim_{#1}}}
\def\@thhomwithout{\mathrel{\sim_\theta}}
\makeatother

\newsavebox{\discrmapgraphic}  
\savebox{\discrmapgraphic}{
  \begin{tikzpicture}[scale=1, every node/.style={transform shape}]
      \path[clip] (-3.1 pt, -2.8 pt) rectangle (3.1 pt, 4 pt);
      \fill (0,0) circle (0.78 pt) (0,-0.6 pt) node[draw=black] {$\widehat{}$};
      \end{tikzpicture}
  }
\newcommand{\discrmap}[1][]{
  \usebox{\discrmapgraphic}
  \raisebox{0.38 ex}{$\scriptscriptstyle #1$}
  }



\DeclareMathOperator{\id}{id}				
\DeclareMathOperator{\vol}{Vol}				

\DeclareMathOperator{\fix}{Fix}				

\DeclareMathOperator{\isom}{Isom}			

\DeclareMathOperator{\aut}{Aut}				




\DeclareMathOperator{\SO}{SO}

\DeclareMathOperator{\Sl}{SL}	



\DeclareMathAlphabet{\mathbit}{OT1}{cmr}{bx}{it}  	




\theoremstyle{plain}
\newtheorem{thm}{Theorem}[section]				
\newtheorem{prop}[thm]{Proposition}		
\newtheorem{lem}[thm]{Lemma}						
\newtheorem{cor}[thm]{Corollary}

\newtheorem*{thm*}{Theorem}			\newtheorem*{theorem*}{Theorem}		
\newtheorem*{prop*}{Proposition}		\newtheorem*{proposition*}{Proposition}
\newtheorem*{lem*}{Lemma}			\newtheorem*{lemma*}{Lemma}			
\newtheorem*{cor*}{Corollary}			\newtheorem*{corollary*}{Corollary}
\newtheorem*{qu*}{Question}			\newtheorem*{question*}{Question}
\newtheorem*{conj*}{Conjecture}			\newtheorem*{conjecture*}{Question}
\newtheorem*{fact*}{Fact}
\newtheorem*{claim*}{Claim}


\theoremstyle{definition}
\newtheorem{de}[thm]{Definition}

\newtheorem*{de*}{Definition}			\newtheorem{definition*}{Definition}	
\newtheorem*{notation*}{Notation}	
\newtheorem*{conv*}{Convention}			\newtheorem*{convention*}{Convention}

\theoremstyle{remark}
\newtheorem{rmk}[thm]{Remark}						
\newtheorem{exmp}[thm]{Example}


\newcommand{\cone}{\CO} 

%

\title{Discrete fundamental groups of Warped Cones and expanders}
\author{Federico Vigolo}

\begin{document}

\title{Discrete fundamental groups of warped cones and expanders}

\begin{abstract}
In this paper we compute the discrete fundamental groups of warped cones. As an immediate consequence, this allows us to show that there exist coarsely simply\=/connected expanders and superexpanders. This also provides a strong coarse invariant of warped cones and implies that many warped cones cannot be coarsely equivalent to any box space.
\keywords{Warped cone \and discrete fundamental group \and action \and box space \and coarse equivalence \and presentation \and expanders}
\end{abstract}

\maketitle

\section{Introduction}\label{sec:intro}

The main object of study of this paper is the coarse geometry of warped cones. A warped cone is an infinite metric space with bounded geometry that is constructed starting from an action of a group $\Gamma$ on a base metric space $X$ (see Appendix~\ref{sec:appendix}) for a precise definition). This construction was first introduced by John Roe in \cite{Roe05} as a way of providing examples of metric spaces with prescribed coarse geometric properties.

While passing relatively unnoticed in the years following their introduction, warped cones have lately attracted a great deal of interest and lively research; starting with \cite{DrNo15}, many works followed in quick succession: \cite{Saw18,NoSa17,Vig18b,Saw17,SaWu17,WaWa17,dLVi18,Saw18b} and finally \cite{FNvL17}.  The main interest of warped cones stems from the fact that their coarse geometry encodes information about both the metric space $X$ and the action of the group $\Gamma$ (to the extent that, in some cases, properties of the action are equivalent to geometric properties of the associated warped cones, see \emph{e.g.} \cite{Vig18b,Saw18b,FNvL17}).

In particular, a careful choice of the action $\Gamma\curvearrowright X$ can produce warped cones with very interesting properties. This has successfully been used to provide new examples and counter\=/examples of spaces with bounded geometry that have Yu's Property (A), coarsely embed into Hilbert spaces, coarsely embed into uniformly convex Banach spaces, or satisfy some version of the Baum\textendash Connes conjecture (see \cite{Saw18,NoSa17,SaWu17,WaWa17,Saw18b}).

Another application of warped cones that is similar to the above in spirit but is worth mentioning on its own, is that if the action is expanding then the warped cone represents a family of expander graphs \cite{Vig18b}\textemdash see below for further discussion on this. In particular these techniques can be used to construct new examples of expanders and, the warped cone being a purely geometric object, the families of expanders thus obtained will tend to have a geometry which is relatively easy to describe. Moreover, such expanders will tend to share the properties of the warped cone, so that one can find `special' expanders by looking for `special' warped cones. This train of ideas is at the base of \cite{Saw17,dLVi18}.

\

An \emph{expander} is a sequence of finite graphs that have vertex sets of increasing cardinality, are sparse (\emph{i.e.} have uniformly bounded degrees), but are at the same time highly connected (\emph{i.e.} their Cheeger constant is bounded from below). Such graphs play a very important role in computer science and in the study of computational complexity and, besides their intrinsic interest, they also have important applications in geometry and topology \emph{e.g.} as means of constructing groups and spaces that do not satisfy the Baum\textendash Connes conjecture \cite{HLS02}. See the surveys \cite{HLW06} and \cite{Lub12} for a great introduction to expander graphs and their applications.

The existence of expanders was first proved by probabilistic means \cite{Pin73}, and the first explicit constructions were given only later by Margulis \cite{Mar73}. Since then, a number of ways have been devised for constructing families of expanders. The techniques used for this purpose mostly involve either representation theory, additive combinatorics or zig\=/zag products. 
As already remarked in \cite{Vig18b}, the ideas going into the construction of expanders using warped cones have much in common with older representation\=/theoretical arguments (and have some flavour of additive combinatorics as well), but this construction can provide much wider classes of examples. For example, this is used in \cite{FNvL17} to produce the first explicit examples of a continuum of quasi\=/isometrically distinct (disjoint) expanders and superexpanders (\emph{i.e.} sequences of expanders that do not admit coarsely equivalent subsequences). 

Lately \emph{superexpanders} have been defined as families of expander graphs satisfying some strong spectral/non\=/embeddability property. This stronger requirement makes them much harder to build, the main constructions being due to Lafforgue \cite{Laf08} and Mendel\textendash Naor \cite{MeNa14}. Still, the formalism of warped cones is well\=/suited for carrying over spectral information of this sort, so that it was promptly applied in \cite{Saw17,dLVi18} and later in \cite{FNvL17} to produce new examples of superexpanders (see also \cite{NoSa17}).

We will now enter into more detail on the topics that concern this paper.

\subsection{Warped cones and expanders}
We restrict for now to the study of actions on compact manifolds. Let $(M,g)$ be a compact Riemannian manifold and let $\Gamma=\angles{S}$ be a finitely generated group acting on $M$ by diffeomorphisms. Equip $M\times[1,\infty)$ with the Riemannian metric $d_{\rm cone}=t\cdot g+dt^2$. The \emph{warped cone} as introduced by J.~Roe \cite{Roe05} is the space $\cone_\Gamma(M)=M\times[1,\infty)$ equipped with the warped metric $\wdist$ defined as the maximal metric such that $\wdist\leq d_{\rm cone}$ and $\wdist\bigparen{(x,t),(s\cdot x,t)}\leq 1$ for every $x\in M, t\in [1,\infty)$ and $s\in S$. Note that the warped metric depends on the choice of the finite generating set $S$. Still, different finite generating sets produce \emph{coarsely equivalent} metrics (see Section~\ref{sec:preliminaries} for our conventions and definition of coarse equivalence).

A \emph{level set} of a warped cone is a subset $\cone_\Gamma^t(M)\coloneqq M\times \{t\}\subset\cone_\Gamma(M)$ with the induced metric. In \cite{Vig18b} it is shown how to discretise the level set $\cone_\Gamma^t(M)$ to obtain a finite graph $\CG_t\bigparen{\Gamma\curvearrowright M}$ and it is also proved that, as $t_k\to\infty$, the sequence of graphs $\CG_{t_k}\bigparen{\Gamma\curvearrowright M}$ is a family of expanders if and only if the action $\Gamma\curvearrowright M$ is expanding in measure (this is always the case if the action is measure\=/preserving and it has a spectral gap). This was later improved in \cite{Saw17,dLVi18} where it is shown that such graphs are actually superexpanders as soon as the action has a strong Banach\=/valued spectral gap. 
As explained in \cite{dLVi18}, using the strong Banach property (T) of Lafforgue \cite{Laf08}, it is possible to produce explicit examples of such actions. For instance, the action of the group $\Gamma_d\coloneqq \SO(d,\ZZ[\frac{1}{5}])$ on $\SO(d,\RR)$ by left translation with $d\geq 5$ is such an example (this action was already considered by Margulis in his solution to the Hurwitz problem \cite{Mar80}) and hence the graphs $\CG_{t_k}\bigparen{\Gamma_d\curvearrowright \SO(d,\RR)}$ are superexpanders. Further, in that paper it is also shown that the expanders thus obtained are pairwise \emph{not} coarsely equivalent as $d$ varies (they are in fact quasi\=/isometrically distinct).

\begin{rmk}\label{rmk:intro.wc.wsys}
 A word of warning: here and in the rest of the introduction we always talk about warped cones to seamlessly join the pre\=/existing literature. Still, in the setting of this paper it will be much better to work with what we call \emph{warped systems}: this is the data of the collection of level sets of a warped cone (seen as independent metric spaces) together with the generating set of homeomorphisms $S$. 
 
 The reason why we introduce warped systems is that we are mostly interested in the coarse geometry of the level sets (especially when we use them to construct expanders), and the relation between the coarse geoemtry of a warped cone and that of its level sets is not clear yet. That is, it is not clear when a quasi\=/isometry between warped cones induces uniform quasi\=/isometries between their level sets and, vice versa, it is also not clear whether uniform quasi\=/isometries between level sets of two warped cones can be combined to produce a quasi\=/isometry between the warped cones.
 
 For completeness, we will show in the appendix how to adapt the results of this paper to the settings of warped cones.
\end{rmk}

\subsection{Box spaces and expanders}
The oldest construction of expanders is due to Margulis \cite{Mar73} and is obtained \emph{via} box spaces. A \emph{box space} is a collection of Cayley graphs of finite quotients of a finitely generated group $\Lambda$. Box spaces of $\Lambda$ are denoted by $\Box_{(\Lambda_k)}\Lambda$, where $(\Lambda_k)_{k\in\NN}$ is the collection finite index normal subgroups of $\Lambda$ that we are quotienting by (see Section~\ref{sec:preliminaries} for a more detailed discussion). In particular if $\Lambda$ has property (T) then its box spaces are expanders. If $\Lambda$ has Lafforgue's strong Banach property (T) then its box spaces are superexpanders \cite{Laf08} (the latter are known as \emph{Lafforgue expanders}).

The coarse geometry of box spaces received a fair amount of attention in the past, especially with a view to the Baum\textendash Connes conjectures and in relation to their (non-)embeddability properties into Cayley graphs of groups, Hilbert spaces and Banach spaces. 
In particular various results were obtained trying to distinguish different box spaces up to coarse equivalence.

For example, it was proved in \cite{KhVa17} that if two finitely generated groups $\Gamma$ and $\Lambda$ have coarsely equivalent box spaces then $\Gamma$ and $\Lambda$ must be quasi\=/isometric (see \cite{Das15} for an improvement on this). In particular, this can immediately be used to produce countably many expanders (and superexpanders) that are not coarsely equivalent.

An even more striking result was then proved in \cite{DeKh18}, where discrete fundamental groups (see later) are used to show that if tow finitely presented groups $\Gamma$ and $\Lambda$ have coarsely equivalent box spaces then they are commensurable.

\subsection{Warped cones and box spaces}
Already Roe noted that the coarse geometries of box spaces and of warped cones tend to have very similar behaviour. This is probably due to the fact that for every fixed $r>0$, a ball of radius $r$ in a level set $\cone_\Gamma^t(M)$ with $t\gg r$ looks like a fattening of a ball in the Cayley graph ${\rm Cay}(\Gamma,S)$, and similarly, if $N\lhd\Gamma$ is small enough the ball of radius $r$ in ${\rm Cay}(\Gamma/N,S)$ will be exactly isometric to a ball in ${\rm Cay}(\Gamma,S)$. In this sense, warped cones and box spaces have very similar `local coarse geometries'.

As such, it is natural to try to adapt techniques developed for the study of box spaces to the setting of warped cones. The strategy used in \cite{KhVa17} has a very local flavour to it and it is hence relatively simple to translate in the language of warped cones. This was done in \cite{dLVi18} and it is used to prove a `stable rigidity' result. Specifically, if two essentially free isometric actions on manifolds $\Gamma\curvearrowright M$ and $\Lambda\curvearrowright N$ yield coarsely equivalent warped cones then $\Gamma\times\ZZ^{{\rm dim}(M)}$ and $\Lambda\times\ZZ^{{\rm dim}(N)}$ must be quasi\=/isometric. A similar result was independently obtained in \cite{Saw18b}.

This paper adapts the techniques of \cite{DeKh18} to study warped cones. Still, the results that we obtain are very different from those holding in the case of box spaces. This is because the discrete fundamental group concerns the `global coarse geometry' of metric spaces and it turns out that box spaces and warped cones tend to have very different global geometric properties.

\begin{rmk}
In some sense, warped cones can be thought of as generalisations of box spaces. In fact, it was shown in \cite{Saw18} that every box space is isometric to a warped cone (over a totally disconnected base space). Still, one of the main applications of this paper is to show that in many cases warped cones and box spaces have very different\textemdash even incompatible\textemdash coarse geometry.
\end{rmk}

\subsection{Discrete fundamental groups}
In \cite{BCW14} Barcelo, Capraro and White defined the \emph{discrete fundamental group at scale $\theta$} of a metric space $X$ as the group $\thgrp(X)$ described as the analogue of the fundamental group of $X$ where continuous loops are replaced by closed $\theta$\=/paths (\emph{i.e.} finite sequences of points with $d(x_i,x_{+1})\leq\theta$) which are considered up to $\theta$\=/homotopies\footnote{In the context of simplicial complexes, a similar definition was given in \cite{BKLW01}.} (see Section~\ref{sec:discrete.fund.groups} for a detailed discussion).

From our perspective, the usefulness of the discrete fundamental groups is that the study of the groups $\thgrp(X)$ for (families of) metric spaces can provide some strong coarse invariants. Indeed, even if it is not true in general that $\thgrp(X)$ is invariant under coarse equivalences, it is easy to show that a coarse equivalence $X\to Y$ induces a homomorphism of $\thgrp(X)$ into $\thgrp[\theta'](Y)$ where the parameter $\theta'$ is explicitly bounded in terms of $\theta$ and the constants of the coarse equivalence. 
This information can sometimes be enough to prove that such a coarse equivalence cannot exist.

The main idea of \cite{DeKh18} is to use discrete fundamental groups to study box spaces. What they manage to prove is that if $\Lambda$ is a finitely presented group such that the length of the relations are small with respect to a parameter $\theta$ and $N\lhd\Lambda$ is a small enough normal subgroup, then $\thgrp\bigparen{{\rm Cay}(\Lambda/N)}\cong N$. 
From this result they can deduce that if two box spaces $(\Lambda/\Lambda_k)_{k\in\NN}$ and $(\Gamma/\Gamma_k)_{k\in\NN}$ are coarsely equivalent, then $\Lambda_k\cong\Gamma_k$ for every $k\in\NN$ large enough.

\subsection{Main results}
The idea is to study the discrete fundamental groups of warped cones of actions of the free group $F_S$ generated by the finite set $S$. The case of more general finitely generated groups will be a corollary, as the metric of a warped cone only depends on the set $S$ and hence it does not intrinsically matter whether the acting group is free or not. 
For the sake of simplicity we will limit the current discussion to actions by homeomorphisms on a compact Riemannian manifold $M$, but all the following results are true in much more general settings.

Given a set $S$ of homeomorphisms of $M$, we define the \emph{jumping\=/fundamental group} as the group $\jpgrp(M)$ of closed jumping\=/paths up to homotopy, where a jumping\=/path is a finite sequence of continuous paths whose endpoints differ by an element of $S$. That is, a jumping\=/path is an analogue of a continuous path where we are allowed to `jump' from any point $x\in M$ to a different point $s\cdot x$ using one one of the homeomorphisms $s\in S$. 
It turns out that the jumping\=/fundamental group is isomorphic to a semi\=/direct product $\jpgrp(X)\cong \pi_1(M)\rtimes F_S$. Such an isomorphism can be described explicitly (but it is not natural in general, as it depends on some choices).

Afterwards, we note that given a parameter $\theta\geq 1$, every jumping\=/path can be discretised to obtain a $\theta$\=/path in $M$ (equipped with the warped metric). This yields a (natural) surjection from the jumping\=/fundamental group to the discrete fundamental group at scale $\theta$. The kernel of such surjection can be described explicitly and hence we can completely describe the discrete fundamental group as a quotient of $\pi_1(M)\rtimes F_S$.

At this point, with some care it is possible to find a rather satisfactory description of the discrete fundamental groups of level sets of warped cones. Specifically, we prove the following result (see Theorem~\ref{thm:disc.fund.group.of.warped.cones} for the precise statement).

\begin{thm}\label{thm:intro.discr.fund.grp.wc}
 For every $\theta\geq 1$ there exists a $t_0$ large enough so that for every $t\geq t_0$ we have
 \[
  \thgrp\bigparen{\cone_{F_S}^t(M)}\cong\bigparen{\pi_1(M)\rtimes F_S}\big/\aangles{K_\theta}
 \]
 where $K_\theta$ can be described explicitly and depends on the set of elements $w\in F_S$ for which the homeomorphism $w\colon M\to M$ has fixed points.
\end{thm}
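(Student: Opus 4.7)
The plan is to construct an explicit surjection $\Phi_t \colon \jpgrp(M) \twoheadrightarrow \thgrp\bigparen{\cone_{F_S}^t(M)}$ for $t$ large, and then identify its kernel with the normal closure of an explicit set $K_\theta$. The construction of $\Phi_t$ is by discretisation: on the level set $\cone_{F_S}^t(M)$ the ``cone part'' of the warped metric comes from $t\cdot g$, so Riemannian distances on $M$ are scaled by $\sqrt{t}$, whereas each jump by a generator $s\in S$ has warped length $\leq 1\leq \theta$. Given a jumping\=/loop, which by definition is an alternating concatenation of continuous paths $\gamma_1,\dots,\gamma_n$ and generator\=/jumps, I would subdivide each $\gamma_i$ into segments of Riemannian length at most $\theta/\sqrt{t}$, sample a point per segment, and concatenate with the generator\=/jumps to produce a closed $\theta$\=/path in $\cone_{F_S}^t(M)$. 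For $t$ large enough, the elementary moves that generate jumping\=/homotopies (reparametrisation, small homotopies of continuous pieces, insertion/deletion of trivial backtracks $ss^{-1}$) are sent to $\theta$\=/triangle moves, so $\Phi_t$ descends to a well\=/defined homomorphism on $\jpgrp(M)\cong \pi_1(M)\rtimes F_S$.

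Next I would show surjectivity: a closed $\theta$\=/path in $\cone_{F_S}^t(M)$ is a sequence of steps, each of which, by definition of the warped metric, can be realised as a concatenation of short Riemannian motion in $M$ together with at most $\theta$ generator\=/jumps. Packaging these steps produces a jumping\=/loop whose discretisation is $\theta$\=/homotopic to the original path, so $\Phi_t$ is onto. I would then exhibit the ``obvious'' relations forming $K_\theta$: if $w=s_k\cdots s_1\in F_S$ fixes some point $x\in M$, then the pure\=/jumping loop at $x$ following $w$ is sent by $\Phi_t$ to a $\theta$\=/loop of diameter zero at $x$, hence to the identity; together with the ``small loop'' elements of $\pi_1(M)$ arising from continuous subpaths that can be swept away in a Riemannian ball of radius $\theta/\sqrt{t}$, these loops generate $K_\theta$, giving the inclusion $\aangles{K_\theta}\subseteq \ker\Phi_t$.

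The main obstacle is the reverse inclusion $\ker\Phi_t\subseteq \aangles{K_\theta}$. The strategy would be to reverse the discretisation: given a $\theta$\=/null\=/homotopy of a $\theta$\=/loop, build a jumping\=/homotopy modulo $K_\theta$ by filling each elementary $\theta$\=/triangle $(p,q,r)$ in $\cone_{F_S}^t(M)$ with a jumping\=/disc. The crucial geometric observation I would need is that a $\theta$\=/triangle, for $t$ sufficiently large, admits a canonical decomposition into short Riemannian segments and at most a bounded number of generator\=/jumps, where the total ``word content'' along the three edges is controlled. Whenever the word content along the boundary of such a triangle gives a nontrivial element $w\in F_S$, the fact that the triangle closes up in $M$ forces $w$ to move some point $p\in M$ by a Riemannian distance $O(\theta/\sqrt{t})$; by a compactness argument on $M$, for $t$ large enough this means $p$ is uniformly close to a genuine fixed point of $w$, and the triangle can therefore be filled by a jumping\=/homotopy that uses only the corresponding generator of $K_\theta$ together with relations already holding in $\jpgrp(M)$. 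Once this triangle\=/filling lemma is established, gluing the fillings over an arbitrary $\theta$\=/null\=/homotopy shows that the original jumping\=/loop is trivial modulo $\aangles{K_\theta}$, completing the proof.

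I expect the technical heart of the argument to be the triangle\=/filling step: balancing the scale $\theta$, the constant $t_0$, and the injectivity radius of $M$ so that every $\theta$\=/triangle whose ``word content'' fails to come from a fixed\=/point element actually separates points in $M$, and hence cannot appear, while every triangle that does admit such word content is uniformly close to a fixed point and so falls within the scope of a $K_\theta$\=/relation.
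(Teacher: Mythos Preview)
Your strategy is essentially the paper's: build a discretisation surjection from $\jpgrp(M)\cong\pi_1(M)\rtimes F_S$ onto $\thgrp\bigparen{\cone_{F_S}^t(M)}$, exhibit the fixed\=/point loops as kernel elements, and for the hard inclusion fill the elementary cells of a $\theta$\=/homotopy by a compactness argument forcing short words with almost\=/fixed points to have genuine fixed points. The paper organises this in two stages\textemdash first proving in full generality that the kernel is normally generated by the set ${\rm FT}_\theta$ of jumping\=/loops decomposable into four pieces of length $\leq\theta$ (your ``triangles'' are their ``squares''), and only then, for level sets of warped cones, reducing ${\rm FT}_\theta$ to the explicit $K_\theta$ via the sets $C_w^{(i)}(x,r)$ that track how a short jumping\=/path spreads under successive generators.

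Two imprecisions to fix. First, the pure\=/jumping loop at a fixed point $x$ of $w=s_k\cdots s_1$ does \emph{not} have diameter zero: it passes through $s_1(x),\,s_2s_1(x),\ldots$, which are typically far from $x$. What makes it $\theta$\=/null\=/homotopic is that its warped length is $|w|$, and one needs $|w|\leq 4\theta$ for the obvious filling to work; this is exactly why the paper restricts to ${\rm Ell}_\theta=\{w:|w|\leq 4\theta,\ \fix(w)\neq\emptyset\}$. Your triangle\=/filling step only ever produces words of bounded length, so the restriction emerges naturally, but it should appear in your definition of $K_\theta$. Second, the ``small loop'' elements of $\pi_1(M)$ you add to $K_\theta$ are unnecessary: once $t$ is large enough that $\theta/\sqrt{t}$ is below the semi\=/local simple connectedness scale of $M$, every such loop is already null\=/homotopic in $M$ and contributes nothing. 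The paper's $K_\theta$ accordingly contains only the fixed\=/point elements $\bigparen{[\beta w(\beta^*)\alpha_w^*],w}$ with $w\in{\rm Ell}_\theta$.
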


The above result has various immediate consequences. For example, we have the following:

\begin{cor}\label{cor:intro.free.action.simply.connected.mfld}
 If $\Gamma$ is finitely presented, $\Gamma\curvearrowright M$ is a free action and $\pi_1(M)=\{0\}$, then 
 \[  
  \thgrp\bigparen{\cone_\Gamma^t(M)}\cong\Gamma
 \]
 for every $\theta$ and $t$ large enough.
\end{cor}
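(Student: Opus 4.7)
The plan is to deduce the corollary directly from Theorem~\ref{thm:intro.discr.fund.grp.wc} by unpacking the kernel $K_\theta$ under the given hypotheses. Fix a finite presentation $\Gamma=\langle S\mid R\rangle$. The first observation I would make is that the warped metric $\wdist$ depends only on the generating set $S$ and not on the relations of $\Gamma$, so as metric spaces $\cone_\Gamma^t(M)=\cone_{F_S}^t(M)$. Applying Theorem~\ref{thm:intro.discr.fund.grp.wc} and using $\pi_1(M)=\{0\}$, one gets
\[
\thgrp\bigparen{\cone_\Gamma^t(M)}\cong F_S\big/\aangles{K_\theta}
\]
for $\theta,t$ large enough.

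Next I would identify which words $w\in F_S$ act on $M$ with fixed points, since these govern $K_\theta$. Freeness of the $\Gamma$-action says that $\gamma\cdot x=x$ forces $\gamma=e$ in $\Gamma$, so a word $w\in F_S$ has a fixed point if and only if its image in $\Gamma$ is trivial, i.e.\ if and only if $w$ lies in the normal closure $\aangles{R}$ of $R$ in $F_S$. Conversely, every element of $\aangles{R}$ acts on $M$ as the identity homeomorphism, hence fixes all of $M$. Thus the words contributing generators to $K_\theta$ are precisely those in $\aangles{R}$.

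The final step is to check that $\aangles{K_\theta}=\aangles{R}$ as normal subgroups of $F_S$, once $\theta$ and $t$ are chosen large enough. Because $\Gamma$ is finitely presented, $R$ may be taken finite; I would then pick $\theta$ larger than the maximum word\=/length of a relator in $R$ and $t$ correspondingly large, and use the explicit description of $K_\theta$ from Theorem~\ref{thm:intro.discr.fund.grp.wc} to verify that each $r\in R$ (acting trivially, hence with ``fixed point'' at every $x\in M$) contributes $r$ itself as a generator of $K_\theta$, giving $R\subseteq\aangles{K_\theta}$. The reverse inclusion follows because the previous paragraph forces every generator of $K_\theta$ to be a word in $\aangles{R}$, and hence the relation it contributes is already a consequence of $R$. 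Combining the two inclusions yields
\[
\thgrp\bigparen{\cone_\Gamma^t(M)}\cong F_S\big/\aangles{R}=\Gamma.
\]

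The main obstacle I expect is this last step: one must read off the explicit generators of $K_\theta$ from Theorem~\ref{thm:intro.discr.fund.grp.wc} carefully, and ensure both that the finitely many relators in $R$ appear among them (so that the parameter $\theta$ can be calibrated in a finite way) and that no additional relations slip into $K_\theta$ beyond the normal closure of $R$. Everything else is a combination of the metric equality $\cone_\Gamma^t(M)=\cone_{F_S}^t(M)$, the hypothesis $\pi_1(M)=\{0\}$, and the dichotomy ``fixed point $\Leftrightarrow$ trivial in $\Gamma$'' supplied by freeness.
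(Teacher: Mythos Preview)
Your proposal is correct and follows essentially the same route as the paper. The paper's precise version (Theorem~\ref{thm:disc.fund.group.of.warped.cones} and Corollary~\ref{cor:discr.fund.grp.of.free.Gamma.warped.cone}) makes explicit what you anticipate: with $\pi_1(M)=\{0\}$ the generators of $K_\theta$ reduce to the words $w\in F_S$ of length at most $4\theta$ having a fixed point, which by freeness is exactly the set $R_\theta=\{r\in\aangles{R}:|r|\le 4\theta\}$; then finite presentability gives $\aangles{R_\theta}=\aangles{R}$ once $4\theta$ exceeds the longest relator length, and hence $F_S/\aangles{K_\theta}\cong\Gamma$.
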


We also prove more precise statements (see Corollary~\ref{cor:discr.fund.grp.of.free.Gamma.warped.cone.exact.sequence}) that allow us to describe $\thgrp\bigparen{\cone_\Gamma^t(M)}$ for free actions on manifolds that are not simply\=/connected by using a short exact sequence.

\

Since the graphs obtained as discretisations of level sets of a warped cone are coarsely equivalent to the level sets themselves, their discrete fundamental group will (roughly) be the same as that of the level sets. In particular, if $d\geq 3$ is odd and $S\subset\SO(d,\RR)$ is a finite subset so that the action $F_S\curvearrowright \SS^{d-1}$ has a spectral gap, it follows from \cite{Vig18b} that the level sets are expanders. Theorem~\ref{thm:intro.discr.fund.grp.wc} hence implies:

\begin{cor}
 The graphs $\CG_{t_k}\bigparen{F_S\curvearrowright \SS^{d-1}}$ are a family of coarsely simply\=/connected expander graphs. That is,
 \[
  \thgrp\Bigparen{\CG_{t_k}\bigparen{F_S\curvearrowright \SS^{d-1}}}=\{0\}
 \]
 for every $\theta\geq 1$.
\end{cor}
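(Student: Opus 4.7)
The approach is a direct application of Theorem~\ref{thm:intro.discr.fund.grp.wc} to $M=\SS^{d-1}$, combined with a transfer to the discretisations via coarse equivalence.

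First I would specialise Theorem~\ref{thm:intro.discr.fund.grp.wc} using that $\SS^{d-1}$ is simply connected for $d\geq 3$, so $\pi_1(\SS^{d-1}) = \{0\}$ and
\[
\thgrp\bigparen{\cone_{F_S}^t(\SS^{d-1})} \cong F_S\big/\aangles{K_\theta}
\]
for every $t$ large enough (with respect to $\theta$). It therefore suffices to prove $\aangles{K_\theta} = F_S$. The key input is a linear\=/algebraic fact that is specific to odd $d$: every matrix $A\in\SO(d,\RR)$ admits $+1$ as an eigenvalue. Indeed, complex eigenvalues come in conjugate pairs of product $1$ and the remaining real eigenvalues lie in $\{\pm 1\}$; since $d$ is odd the number of real eigenvalues is odd, and since $\det(A) = +1$ the multiplicity of $-1$ is even, forcing the multiplicity of $+1$ to be odd and in particular positive. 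In particular each generator $s\in S$ has a fixed point $x_s\in\SS^{d-1}$.

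From the explicit description of $K_\theta$ promised by Theorem~\ref{thm:disc.fund.group.of.warped.cones}, this fixed\=/point hypothesis is exactly what is needed to place (the class of) each generator $s$ into $\aangles{K_\theta}$ once $t$ is large enough with respect to $\theta$: the relevant closed jumping\=/loop realises the jump by $s$ starting at $x_s$ and is degenerate because $s\cdot x_s = x_s$, so it discretises to a trivial $\theta$\=/loop. Applying this to every generator gives $F_S \subset \aangles{K_\theta}$, so $\thgrp\bigparen{\cone_{F_S}^t(\SS^{d-1})} = \{0\}$ for $t$ sufficiently large.

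Finally, the graph $\CG_{t_k}\bigparen{F_S\curvearrowright \SS^{d-1}}$ is coarsely equivalent to the level set $\cone_{F_S}^{t_k}(\SS^{d-1})$ with constants independent of $k$, and such a coarse equivalence induces a map on discrete fundamental groups losing only a controlled factor in the scale parameter. Given $\theta\geq 1$, one picks $\theta'$ absorbing this loss and chooses $k$ large enough that the previous step applies at scale $\theta'$ and level $t_k$; this transfers triviality to $\thgrp\bigparen{\CG_{t_k}\bigparen{F_S\curvearrowright \SS^{d-1}}}$. The main obstacle is precisely this last bookkeeping: verifying that the small jumping\=/loops based at fixed points of the generators really fit inside the discrete $\theta$\=/scale after discretisation, and that the interaction of the coarse\=/equivalence constants, the scale $\theta$, and the level $t_k$ can be arranged simultaneously for all generators.
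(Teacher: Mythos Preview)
Your proposal is correct and follows exactly the reasoning the paper has in mind; the corollary is stated in the introduction as an immediate consequence of Theorem~\ref{thm:intro.discr.fund.grp.wc} with no separate proof, relying on precisely the two facts you isolate: $\pi_1(\SS^{d-1})=\{0\}$ for $d\geq 3$, and every element of $\SO(d,\RR)$ with $d$ odd has $+1$ as an eigenvalue and hence a fixed point on the sphere (so each generator lies in ${\rm Ell}_\theta$ for any $\theta\geq 1$). Your concern about the ``bookkeeping'' in the transfer to the graphs is unwarranted: the uniform coarse equivalence between $\CG_{t_k}$ and the level sets together with Lemma~\ref{lem:qi.induce.maps.on.disc.fund.grp} handles this routinely, and since the target group is already trivial there is nothing delicate to track.
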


From \cite{dLVi18} we also deduce:

\begin{cor}
 If a finitely generated subgroup $\Gamma=\angles{S}\subset\SO(d,\RR)$ has Lafforgue's strong Banach property (T) and $d$ is odd, then the graphs $\CG_{t_k}\bigparen{F_S\curvearrowright \SS^{d-1}}$ are a family of coarsely simply\=/connected superexpanders.
\end{cor}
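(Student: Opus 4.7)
The plan is to combine two results already established or cited in the paper: the preceding corollary provides the coarse simple connectedness of the graphs $\CG_{t_k}\bigparen{F_S\curvearrowright\SS^{d-1}}$, while \cite{dLVi18} upgrades their expander property to a superexpander property once $\Gamma=\angles{S}$ is assumed to have Lafforgue's strong Banach property (T). The oddness hypothesis on $d$ plays essentially the same role in both halves, and the main work has already been done in the two previous corollaries.

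For the superexpander conclusion I would invoke the main theorem of \cite{dLVi18}: strong Banach property (T) of $\Gamma$ yields a strong Banach\=/valued spectral gap for the translation action of $\Gamma$ on $\SO(d,\RR)$, which descends to the homogeneous space $\SS^{d-1}\cong\SO(d,\RR)/\SO(d-1,\RR)$ because $L^p$\=/functions on the quotient pull back isometrically to the $\SO(d-1,\RR)$\=/invariant subspace of $L^p\bigparen{\SO(d,\RR)}$. Feeding this back into the construction of \cite{dLVi18} then says that the discretised level sets $\CG_{t_k}\bigparen{F_S\curvearrowright\SS^{d-1}}$ form a superexpander.

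For coarse simple connectedness I would apply the preceding corollary directly, whose proof rests on Theorem~\ref{thm:intro.discr.fund.grp.wc}. The sphere $\SS^{d-1}$ is simply connected since $d\geq 3$, and because $d$ is odd a standard parity argument on eigenvalues shows that every non\=/trivial element of $\SO(d,\RR)$ has $+1$ as an eigenvalue and hence fixes a pair of antipodal points on $\SS^{d-1}$. Thus every element of $F_S$ acts with fixed points, so the normal closure $\aangles{K_\theta}$ appearing in Theorem~\ref{thm:intro.discr.fund.grp.wc} absorbs all of $\pi_1\bigparen{\SS^{d-1}}\rtimes F_S=F_S$ once $\theta$ and $t$ are large enough, making $\thgrp$ of the level set trivial. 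Transferring this triviality to the discretised graph $\CG_{t_k}$ via the functorial behaviour of $\thgrp$ under coarse equivalences then completes the argument.

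The main point requiring care is bookkeeping: one must check that the threshold $t_0(\theta)$ coming from Theorem~\ref{thm:intro.discr.fund.grp.wc} is compatible with the sequence $t_k\to\infty$ chosen to produce the (super)expander, so that for each fixed $\theta\geq 1$ eventually every $\CG_{t_k}$ has trivial $\thgrp$. Since this is exactly the bookkeeping already performed for the preceding corollary, no new difficulty appears, and the only genuinely delicate point is the descent of the strong Banach spectral gap from $\SO(d,\RR)$ to $\SS^{d-1}$ within the precise framework of \cite{dLVi18}.
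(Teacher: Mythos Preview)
Your proposal is correct and matches the paper's approach: the paper states this corollary without proof, simply writing ``From \cite{dLVi18} we also deduce,'' so the intended argument is exactly the combination you describe---coarse simple connectedness from the preceding corollary, and the superexpander property from \cite{dLVi18}. One small sharpening: since every element of $\SO(d,\RR)$ with $d$ odd has a fixed point on $\SS^{d-1}$, already each generator $s\in S$ lies in ${\rm Ell}_\theta$ for any $\theta\geq 1$, so $\aangles{K_\theta}=F_S$ for \emph{every} $\theta\geq 1$ (not just $\theta$ large enough), which is why the preceding corollary is stated for all $\theta\geq 1$.
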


\begin{rmk}
 Note that it follows from the work of Delabie\textendash Khukhro \cite{DeKh18} that expanders coming from box spaces cannot be coarsely simply\=/connected (see the discussion in Section~\ref{sec:wc.and.box}).
\end{rmk}

From Theorem~\ref{thm:intro.discr.fund.grp.wc} it follows that, once $\theta$ is fixed, the discrete fundamental group of the level set $\cone_{F_S}^{t}(M)$ does not depend on $t$ (as soon as it is large enough). In many instances, it is also the case that (the limit for large $t$) of the $\theta$\=/group $\thgrp\bigparen{\cone_{F_S}^{t}(M)}$ does not even depend on $\theta\gg 1$. 
When this happens, we say that the warped cone has \emph{stable discrete fundamental group} and we denote such limit by $\thgrp[\infty]\bigparen{F_S\curvearrowright M}$ (in general, we can define this group as the direct limit of the discrete fundamental groups).
As an example, we have that if a finitely generated group $\Gamma$ acts freely on $M$, then the warped cone has stable discrete fundamental group if and only if $\Gamma$ is finitely presented (Lemma~\ref{lem:stable.iff.finitely.presented(free.action)}).

This discrete fundamental group `at scale infinity' can be used as a coarse invariant. In fact we have:

\begin{thm}\label{thm:intro.infinite.discr.grp.is.invariant}
 If (the level sets of) two warped cones $\cone_\Gamma(M)$ and $\cone_\Lambda(N)$ are coarsely equivalent and $\cone_\Gamma(M)$ has stable discrete fundamental group, then $\cone_\Lambda(N)$ has stable discrete fundamental group as well and
 \[
  \thgrp[\infty]\bigparen{\Gamma\curvearrowright M}\cong \thgrp[\infty]\bigparen{\Lambda\curvearrowright N}.
 \]
\end{thm}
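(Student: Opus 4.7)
The plan is to lift the standard observation that a coarse equivalence $X\to Y$ with constants $(L,C)$ induces, for every $\theta\geq 1$, a homomorphism $\pi_{1,\theta}(X)\to\pi_{1,\psi(\theta)}(Y)$ with $\psi(\theta)$ depending only on $\theta,L,C$, to a statement about the $\theta\to\infty$ direct limits, and then to bootstrap stability from the $\Gamma$-side to the $\Lambda$-side.

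First I would unpack the hypothesis: for $t$ sufficiently large the level sets come equipped with uniformly $(L,C)$-coarsely equivalent maps $f_t\colon\cone_\Gamma^t(M)\to\cone_\Lambda^{\alpha(t)}(N)$ and $g_t\colon\cone_\Lambda^t(N)\to\cone_\Gamma^{\alpha'(t)}(M)$ whose compositions are uniformly close to the respective identities. Each of these induces group homomorphisms between the corresponding discrete fundamental groups that are compatible with the transition maps of the $\theta$-direct systems, and\textemdash invoking Theorem~\ref{thm:intro.discr.fund.grp.wc} to canonically identify $\pi_{1,\theta}(\cone_\Gamma^t(M))$ across large $t$\textemdash they descend to maps $F\colon\pi_{1,\infty}(\Gamma\acts M)\to\varinjlim_\theta\pi_{1,\theta}(\cone_\Lambda^t(N))$ and $G$ in the opposite direction. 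The relations $g\circ f\simeq\id$ and $f\circ g\simeq\id$ then force $G\circ F$ and $F\circ G$ to equal the canonical transition maps of the two direct systems; stability on the $\Gamma$-side makes the former an isomorphism for $\theta$ large, while the latter is automatically an isomorphism in the direct limit by construction. This already yields the isomorphism
\[
 \pi_{1,\infty}(\Gamma\acts M)\;\cong\;\varinjlim_\theta\pi_{1,\theta}(\cone_\Lambda^t(N)).
\]

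To promote this direct-limit isomorphism to genuine stability on the $\Lambda$-side I set $H\coloneqq\pi_{1,\infty}(\Gamma\acts M)$. For $\theta$ large, the identity $g_{*,\psi(\theta)}\circ f_{*,\theta}=\id_H$ exhibits $f_{*,\theta}\colon H\hookrightarrow\pi_{1,\psi(\theta)}(\cone_\Lambda^{\alpha(t)}(N))$ as a split injection with retraction $g_{*,\psi(\theta)}$, so the structural map $\pi_{1,\psi(\theta)}(\cone_\Lambda)\to\pi_{1,\psi^3(\theta)}(\cone_\Lambda)$, being equal to $f_*\circ g_*$, has image $f_*(H)\cong H$ and kernel $\ker g_{*,\psi(\theta)}$. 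Using the explicit description of $\pi_{1,\theta}(\cone_\Lambda^t(N))$ as a quotient of $\pi_1(N)\rtimes F_T$ furnished by Theorem~\ref{thm:intro.discr.fund.grp.wc}, elements of $\ker g_*$ are represented by finitely many explicit classes; each must become trivial in $H$ under $F^{-1}$, and lifting these trivialisations back shows they already hold in $\pi_{1,\psi(\theta)}(\cone_\Lambda)$ once $\theta$ is large enough. Hence $\ker g_{*,\psi(\theta)}=0$ and $\pi_{1,\psi(\theta)}(\cone_\Lambda^t(N))\cong H$, giving the desired stability.

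The main obstacle is precisely this last step: upgrading the abstract direct-limit isomorphism to a genuine stabilisation of the $\Lambda$-side cannot be done by purely formal direct-system manipulation\textemdash in general the kernels $\ker g_{*,\theta}$ need not vanish at any finite stage\textemdash so this part of the argument must exploit the concrete presentation of discrete fundamental groups of warped cones from Theorem~\ref{thm:intro.discr.fund.grp.wc}. A secondary technical issue, which I would handle at the outset, is coordinating the parameter shifts $\theta\mapsto\psi(\theta)$ and $t\mapsto\alpha(t),\alpha'(t)$ so that every level invoked in the argument lies simultaneously inside the range where Theorem~\ref{thm:intro.discr.fund.grp.wc} can be applied on both sides.
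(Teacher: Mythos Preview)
Your setup is essentially the paper's, but the final step\textemdash showing stability on the $\Lambda$-side\textemdash contains a genuine gap and is also unnecessarily hard.

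The gap is the claim that ``elements of $\ker g_*$ are represented by finitely many explicit classes'' and can then be killed by ``lifting trivialisations back''. Theorem~\ref{thm:intro.discr.fund.grp.wc} gives $\pi_{1,\theta}\bigl(\cone_\Lambda^t(N)\bigr)$ as a quotient of $\pi_1(N)\rtimes F_T$, but it says nothing about $\ker g_*$ being finitely (normally) generated, and there is no mechanism provided for pulling relations from the direct limit back to a fixed finite scale. As you yourself note, this sort of bootstrap cannot be done by formal direct-system manipulation; the replacement you offer is not actually an argument.

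The point you are missing\textemdash and which makes the paper's proof almost immediate\textemdash is that all the maps in your zig-zag are already \emph{surjective}. Since the warped level sets are $1$-geodesic, $(\id)_*$ between scales is surjective, and for an $(L,A)$-quasi-isometry between $1$-geodesic spaces the induced map $f_*$ is surjective whenever the source scale is at least $L+A$ (Lemma~\ref{lem:qi.induce.maps.on.disc.fund.grp}). So for suitable $\theta<\theta'<\theta''$ one has a chain of surjections
\[
 \pi_{1,\theta}(X,\delta_S^{t_k})\twoheadrightarrow
 \pi_{1,\theta'}(Y,\delta_T^{s_k})\twoheadrightarrow
 \pi_{1,L\theta'+A}(X,\delta_S^{t_k})\twoheadrightarrow
 \pi_{1,\theta''}(Y,\delta_T^{s_k})\twoheadrightarrow
 \pi_{1,L\theta''+A}(X,\delta_S^{t_k}).
\]
By stability on the $\Gamma$-side and Theorem~\ref{thm:disc.fund.group.of.warped.cones}, for $k$ large the outer composite is $(\id_X)_*$ and is an isomorphism; a chain of surjections composing to an isomorphism consists entirely of isomorphisms. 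In particular the middle composite $\pi_{1,\theta'}(Y)\to\pi_{1,\theta''}(Y)$, which is $A$-close to the identity and hence equals $(\id_Y)_*$, is an isomorphism. That is exactly stability on the $\Lambda$-side, and the isomorphism of the $\pi_{1,\infty}$ groups then drops out for free. No kernel analysis, no finiteness, no lifting is needed.
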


Finally, the fact that the discrete fundamental groups of $\cone_{F_S}^{t}(M)$ do not depend on $t$ should be contrasted with the theorem of Delabie\textendash Khukhro \cite{DeKh18} stating that for box spaces of finitely presented groups we have $\thgrp\bigparen{{\rm Cay}(\Lambda/\Lambda_k)}\cong \Lambda_k$ for every $k$ large enough. This provides strong evidence that (level sets of) warped cones and box spaces are generally quasi\=/isometrically distinct. In particular, we can prove the following:

\begin{thm}\label{thm:intro.wc.ce.to.box.spaces}
 If a box space $\Box_{(\Lambda_k)}\Lambda$ is coarsely equivalent to a sequence of level sets of a warped cone $\cone_\Gamma(M)$, then $\cone_\Gamma(M)$ has stable discrete fundamental group if and only if $\Lambda$ is finitely presented. When this is the case, $\Lambda_k\cong \thgrp[\infty]\bigparen{\Gamma\curvearrowright M}$ for every $k\in\NN$ large enough.
\end{thm}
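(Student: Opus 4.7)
The plan is to combine a coarse invariance principle for the stable discrete fundamental group with Delabie\textendash Khukhro's computation of $\thgrp$ for box spaces. First, I would verify that Theorem~\ref{thm:intro.infinite.discr.grp.is.invariant} can be upgraded to the following general statement: if two sequences of bounded geometry metric spaces are uniformly coarsely equivalent, then one has stable discrete fundamental group if and only if the other does, in which case their stable limits coincide. This is not specific to warped cones; coarse equivalences induce maps on discrete fundamental groups with an explicitly controlled shift of the scale parameter, so passing to the direct limit $\theta\to\infty$ already gives both the isomorphism of the stable values and the equivalence of whether stability holds in the first place.

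Second, I would invoke the Delabie\textendash Khukhro analysis and extend it slightly beyond the finitely presented case. Writing $\Lambda=\angles{S\mid R}$ and $R_\theta=\{r\in R:|r|\leq\theta\}$, the same argument they use should give, for every $\theta\geq 1$ and every $k$ large enough that the injectivity radius of $\cay(\Lambda/\Lambda_k)$ greatly exceeds $\theta$,
\[
\thgrp\bigparen{\cay(\Lambda/\Lambda_k)}\cong \ker\bigparen{F_S/\aangles{R_\theta}\to\Lambda/\Lambda_k}.
\]
When $\Lambda$ is finitely presented this kernel equals $\Lambda_k$ for all $\theta$ above the longest relation length, recovering their theorem; when $\Lambda$ is not finitely presented, the set $R_\theta$ keeps growing, the quotients $F_S/\aangles{R_\theta}$ never stabilise, and hence neither do the groups $\thgrp\bigparen{\cay(\Lambda/\Lambda_k)}$, so the box space fails to have stable discrete fundamental group.

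Combining these, suppose the warped cone has stable discrete fundamental group $G$. The invariance principle transfers stability to the box space, and the second step then forces $\Lambda$ to be finitely presented and identifies $G$ with the eventual value of $\Lambda_k$. Conversely, if $\Lambda$ is finitely presented, Delabie\textendash Khukhro yields $\theta$-stability of the box space at each fixed $k$, while Theorem~\ref{thm:intro.discr.fund.grp.wc} combined with the coarse equivalence forces the $\theta$-stable value $\Lambda_k$ to be eventually constant in $k$; together these give full stability, which transfers back to the warped cone by invariance. In both cases the identification $\Lambda_k\cong\thgrp[\infty]\bigparen{\Gamma\curvearrowright M}$ is simply the equality of the coincident stable limits.

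The main obstacle I anticipate is the careful coordination of scales. When transferring stability across a coarse equivalence the parameter $\theta$ drifts by amounts depending on the coarse\=/equivalence constants, and Theorem~\ref{thm:intro.discr.fund.grp.wc} only yields $t$-stability for $t$ above a threshold depending on $\theta$. One must verify that as $\theta\to\infty$ the required level $t_k$ and the required index $k$ (controlling the injectivity radius of $\cay(\Lambda/\Lambda_k)$) can be chosen compatibly, so that the stability windows on the two sides overlap in a way that survives both limits. A secondary technical point is making the displayed formula above rigorous in the general, not necessarily finitely presented, case, since Delabie\textendash Khukhro state their theorem only under that hypothesis.
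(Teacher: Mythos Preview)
Your overall strategy---the refined Delabie--Khukhro formula together with the surjection diagrams coming from the coarse equivalence---is exactly the paper's approach (Theorem~\ref{thm:box.space.qi.warpcone.constant.groups}, resting on Theorem~\ref{thm:DeKh}). Your anticipated obstacles are also the ones that actually arise, and your ``secondary technical point'' is precisely the content of the paper's Theorem~\ref{thm:DeKh}.

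There is, however, a genuine issue with Step~1 as you state it. You cannot upgrade Theorem~\ref{thm:intro.infinite.discr.grp.is.invariant} to a principle for arbitrary sequences, because the notion of ``stable discrete fundamental group'' does not make sense there. For a \emph{single} bounded metric space $Y_k$ the direct limit $\varinjlim_\theta \thgrp(Y_k)$ is trivial (the paper's Remark after Theorem~\ref{thm:coarse.fund.group.of.warped.systems}), so ``passing to the direct limit $\theta\to\infty$'' gives nothing. The meaningful object $\thgrp[\infty]\bigparen{F_S\curvearrowright X}$ exists only because, for warped systems, $\thgrp\bigparen{X,\wSdist[t]}$ is eventually \emph{constant in $t$} for each fixed $\theta$ (Theorem~\ref{thm:disc.fund.group.of.warped.cones}), and one then takes the $\theta$-limit of these asymptotic values. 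For a box space the groups $\thgrp\bigparen{{\rm Cay}(\Lambda/\Lambda_k)}\cong(\Lambda_k)_\theta$ are \emph{not} eventually constant in $k$, so there is no asymptotic value of which to take a $\theta$-limit, and hence no ``stability'' to transfer. The paper therefore does not define stability for box spaces at all; it argues directly with the surjection diagram to show that $\id_*\colon\thgrp[\theta']\bigparen{{\rm Cay}(\Lambda/\Lambda_k)}\to\thgrp[\theta'']\bigparen{{\rm Cay}(\Lambda/\Lambda_k)}$ is an isomorphism for large $k$, identifies this with the natural map $(\Lambda_k)_{\theta'}\to(\Lambda_k)_{\theta''}$ via Theorem~\ref{thm:DeKh}, and exhibits an element of the kernel when $\Lambda$ is not finitely presented.

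This also affects your converse. Your sentence ``Theorem~\ref{thm:intro.discr.fund.grp.wc} combined with the coarse equivalence forces the $\theta$-stable value $\Lambda_k$ to be eventually constant in $k$'' reads as circular: that $\Lambda_k$ is eventually constant is part of the \emph{conclusion}, and it only follows once stability of the warped system has been established. The correct (and what the paper calls ``analogous'') argument runs the surjection diagram in the other direction: since $\Lambda$ is finitely presented, $\id_*\colon\thgrp[\theta]\bigparen{{\rm Cay}(\Lambda/\Lambda_k)}\to\thgrp[L\theta''+A]\bigparen{{\rm Cay}(\Lambda/\Lambda_k)}$ is an isomorphism for $\theta$ large and $k$ large, hence the sandwiched map $\id_*\colon\thgrp[\theta']\bigparen{X,\wSdist[t_k]}\to\thgrp[\theta'']\bigparen{X,\wSdist[t_k]}$ is an isomorphism; now Theorem~\ref{thm:disc.fund.group.of.warped.cones} identifies these with the $t$-independent groups $\thgrp[\theta']\bigparen{F_S\curvearrowright X}$ and $\thgrp[\theta'']\bigparen{F_S\curvearrowright X}$, giving stability directly. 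The eventual constancy of $\Lambda_k$ then follows from Lemma~\ref{lem:coarse.fund.group.invariant.of.sequences}, not the other way around.
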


This result allows us to show that for various classes of groups $\Lambda$ (\emph{e.g.} free; with fixed price $p>1$; lattices in simple Lie groups not locally isomorphic to $\Sl(2,\RR)$) no box space of $\Lambda$ can be coarsely equivalent to a warped cone of this sort.

Vice versa, (level sets of) a warped cone $\cone_\Gamma(M)$ with stable discrete fundamental group cannot be coarsely equivalent to any box space if $\thgrp[\infty]\bigparen{\cone_\Gamma(M)}$ is \emph{e.g.} finite; not finitely presented; not residually finite; a lattice in a higher rank Lie group or, more generally, \emph{finitely co\=/Hopfian} (see \cite{vLi17} for a definition and discussion of these groups). Moreover, if the $\Gamma$\=/action is free and $\pi_1(M)$ is finite, then the level sets cannot be coarsely equivalent to a box space if $\Gamma$ has polynomial growth, has property (T) or is hyperbolic.

Using a rigidity result from \cite{dLVi18} (see also \cite{Saw18b}), we manage to extend (a consequence of) \cite[Theorem 5.11]{dLVi18}, proving the following.

\begin{thm}
 The superexpanders $\CG_{t_k}\bigparen{\Gamma_d\curvearrowright\SO(d,\RR)}$ are not coarsely equivalent to any box space.
\end{thm}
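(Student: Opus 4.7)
The plan is to argue by contradiction, assuming that the graphs $\CG_{t_k}\bigparen{\Gamma_d \curvearrowright \SO(d,\RR)}$ are coarsely equivalent to a box space $\Box_{(\Lambda_k)} \Lambda$. Since $\CG_{t_k}$ is uniformly quasi\=/isometric to the level set $\cone_{\Gamma_d}^{t_k}(\SO(d,\RR))$, and since by \cite{Saw18} every box space is isometric to a warped cone over a totally disconnected (hence $0$\=/dimensional) base, this assumption translates into a coarse equivalence between the level sets of two warped cones of free isometric actions. From here I would combine two independent pieces of information: the computation of the stable discrete fundamental group via Theorem~\ref{thm:intro.wc.ce.to.box.spaces}, and the geometric rigidity result of \cite{dLVi18}.

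The first step is to compute $\thgrp[\infty]\bigparen{\Gamma_d \curvearrowright \SO(d,\RR)}$. The left translation action $\Gamma_d \curvearrowright \SO(d,\RR)$ is free and isometric with respect to any bi\=/invariant metric, and $\Gamma_d$ is finitely presented (being arithmetic), so Corollary~\ref{cor:discr.fund.grp.of.free.Gamma.warped.cone.exact.sequence} gives a short exact sequence
\[
 1 \longrightarrow \pi_1\bigparen{\SO(d,\RR)} \longrightarrow G \longrightarrow \Gamma_d \longrightarrow 1,
\]
where $G \coloneqq \thgrp[\infty]\bigparen{\Gamma_d \curvearrowright \SO(d,\RR)}$. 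Since $\pi_1(\SO(d,\RR)) \cong \ZZ/2$ for $d \geq 3$, the group $G$ is a finite extension of $\Gamma_d$ and in particular is quasi\=/isometric to $\Gamma_d$. Applying Theorem~\ref{thm:intro.wc.ce.to.box.spaces} then forces $\Lambda$ to be finitely presented and $\Lambda_k \cong G$ for every $k$ large enough; as $\Lambda_k$ has finite index in $\Lambda$, this yields a quasi\=/isometry between $\Lambda$ and $\Gamma_d$.

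For the second step I would apply the rigidity result of \cite{dLVi18}: two coarsely equivalent warped cones of free isometric actions $\Lambda \curvearrowright X$ and $\Gamma_d \curvearrowright \SO(d,\RR)$ force a quasi\=/isometry $\Lambda \times \ZZ^{\dim X} \sim \Gamma_d \times \ZZ^{d(d-1)/2}$. With $\dim X = 0$ this simplifies to $\Lambda$ being quasi\=/isometric to $\Gamma_d \times \ZZ^N$ for $N \coloneqq d(d-1)/2 > 0$. Combining this with the first step produces a quasi\=/isometry $\Gamma_d \sim \Gamma_d \times \ZZ^N$, which must then be ruled out. The cleanest route is via asymptotic dimension: $\Gamma_d$ is an $S$\=/arithmetic lattice and hence has finite asymptotic dimension, while $\mathrm{asdim}(\Gamma_d \times \ZZ^N) = \mathrm{asdim}(\Gamma_d) + N > \mathrm{asdim}(\Gamma_d)$, contradicting the quasi\=/isometric invariance of asymptotic dimension.

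The main difficulty I anticipate is not in any single step but in matching up the hypotheses so that the two black boxes really apply to the same coarse equivalence after the translation to warped\=/cone language; in particular one must verify that the warped\=/cone presentation of $\Box_{(\Lambda_k)}\Lambda$ coming from \cite{Saw18} realises a free isometric action in a form that makes the rigidity theorem of \cite{dLVi18} (or the companion result in \cite{Saw18b}) applicable, possibly after a small extension of that theorem to totally disconnected bases. A plausible fallback, if the asymptotic dimension conclusion is unavailable for $\Gamma_d$, is to invoke the strong quasi\=/isometric rigidity of higher\=/rank $S$\=/arithmetic lattices (Eskin--Farb, Kleiner--Leeb), by which any finitely generated group quasi\=/isometric to $\Gamma_d$ must be commensurable to it up to finite kernels, and hence cannot be of the form $\Gamma_d \times \ZZ^N$ for $N > 0$.
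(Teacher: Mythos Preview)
Your overall approach coincides with the paper's: show that $\thgrp[\infty]\bigparen{\Gamma_d\curvearrowright\SO(d,\RR)}$ is virtually isomorphic to $\Gamma_d$ (the paper obtains this from Corollary~\ref{cor:discr.fund.grp.of.free.Gamma.warped.cone.exact.sequence} or, more directly, from Theorem~\ref{thm:virtual.isomorphism.through.coverings}), apply Theorem~\ref{thm:box.space.qi.warpcone.constant.groups} to conclude that $\Lambda$ is virtually isomorphic to $\Gamma_d$, invoke the rigidity result of \cite{dLVi18} to get that $\Lambda$ is quasi\=/isometric to $\Gamma_d\times\ZZ^{\dim\SO(d,\RR)}$, and hence that $\Gamma_d$ is quasi\=/isometric to $\Gamma_d\times\ZZ^N$ with $N>0$, a contradiction.

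Two points of divergence. First, your detour through \cite{Saw18} is unnecessary and is precisely the source of the difficulty you anticipate: the rigidity theorem of \cite{dLVi18} (quoted in the paper immediately before the relevant corollary) is already stated for a warped system coarsely equivalent to a \emph{box space} and directly yields $\Lambda\sim_{QI}\Gamma_d\times\ZZ^{\dim M}$, so there is no need to realise $\Box_{(\Lambda_k)}\Lambda$ as a warped cone over a $0$\=/dimensional base and no issue of transferring hypotheses. Second, for the final contradiction the paper simply records that $\Gamma_d$ has property~(T), which is one of the three alternatives listed in the corollary preceding the theorem; your route via asymptotic dimension also works (though the product formula $\mathrm{asdim}(G\times\ZZ^N)=\mathrm{asdim}(G)+N$ deserves a reference), and your fallback through quasi\=/isometric rigidity of higher\=/rank $S$\=/arithmetic lattices is the most robust of the three.
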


Despite all this evidence, in some cases warped cones and box spaces \emph{are} coarsely equivalent. Most notably this happens for actions by rotations on tori and quotients of $\ZZ^d$. There also are examples of warped cones yielding expanders that are coarsely equivalent to box spaces. For such an example (see Example~\ref{exmp:wc.qi.box.expander}) we are indebted to Wouter van Limbeek for the discussions we had at the INI in Cambridge.

\subsection{Historical note}The main results of this paper have been known to the author since
the beginning of 2017, with the core of the paper (including
Theorem~\ref{thm:intro.discr.fund.grp.wc}) already written as of March
the 1$^{\rm st}$, and discussed with a number of participants in the
programme “Non-positive curvature, group actions and cohomology”, held at the Isaac Newton Institute for Mathematical Sciences (Cambridge, UK). The author likewise
explained such results in a seminar talk in Neuch\^{a}tel on the 2$^{\rm
nd}$ of May.

Since then, this work has been in a dormant state, as the author was looking for further consequences and applications. This quiet period was abruptly ended by the sudden and very much unexpected appearance of a preprint of the independent work of Fisher\textendash Nguyen\textendash van Limbeek \cite{FNvL17}.

From what the author could see, discrete fundamental groups of warped cones play a central role in \cite{FNvL17} as well. The results of that paper are focused on the restricted setting of free actions by isometries on manifolds. Moreover, their study uses a somewhat different (and complementary) point of view, as their investigation is more reminiscent of the theory of universal covers (whilst ours is more explicit and hands on). It appears to us that their results do reprove part of our work in the restricted setting they consider (most notably Corollary~\ref{cor:intro.free.action.simply.connected.mfld}), but then they go further and prove a very strong rigidity result that allows them to produce a continuum of non\=/coarsely equivalent warped cones.

\subsection{Organisation of the paper} 
As announced in Remark~\ref{rmk:intro.wc.wsys}, for the development of this paper it is much more convenient to talk about warped systems (Definition \ref{def:warped.system}) rather than warped cones. Such systems are only properly defined in Section~\ref{sec:limit.of.fund.grps} as everything preceding it is a more general discussion.

In Section \ref{sec:preliminaries} we introduce some notation and recall some notions of topology and coarse geometry that will be used throughout. In Section~\ref{sec:discrete.fund.groups}, \ref{sec:jumping.fund.grp} and \ref{sec:discretisations} we properly define the discrete fundamental group and the jumping\=/fundamental group and we prove that discrete fundamental groups can be computed using discretisations of jumping\=/paths. 
Section \ref{sec:criteria.for.computations} only contains some lemmas and criteria to help to compute discrete fundamental groups for warped metrics. With the exception of Proposition~\ref{prop:discr.fund.group.is.semidirect.Gamma}, these results are not necessary for the rest of the paper.

In Section \ref{sec:coarse.fund.grp.wc} we prove Theorem~\ref{thm:intro.discr.fund.grp.wc} and we begin deducing the first corollaries. In Section~\ref{sec:limit.of.fund.grps} we define the coarse fundamental groups $\thgrp[\infty]\bigparen{F_S\curvearrowright M}$, we discuss stable discrete fundamental groups and prove Theorem~\ref{thm:intro.infinite.discr.grp.is.invariant} and its corollaries. Finally, in Section~\ref{sec:wc.and.box} we use these newly\=/developed techniques to study the relations between box spaces and warped systems: we prove Theorem~\ref{thm:intro.wc.ce.to.box.spaces}, we discuss its above mentioned consequences and we also describe the examples of warped systems coarsely equivalent to box spaces.

In the appendix we deal with actual warped cones as defined by Roe and we show that discrete fundamental groups can be similarly used to produce coarse invariants.

\subsection{acknowledgements}
I wish to thank my advisor Cornelia Dru\c{t}u for her continuous support and encouragement and
David Hume, Richard Wade and Gareth Wilkes for helpful discussions and for pointing out the example in Remark~\ref{rmk:non.fin.pres.acting}. I should also thank Wouter van Limbeek for drawing my attention to Example~\ref{exmp:wc.qi.box.expander}. 

The author was funded by the EPSRC Grant 1502483 and the J.T.Hamilton Scholarship. This work was also supported by the NSF under Grant No. DMS-1440140 and the GEAR Network (DMS 1107452, 1107263, 1107367) while the author was in residence at the MSRI in Berkeley during the Fall 2016 semester. We also thank the Isaac Newton Institute for Mathematical Sciences, Cambridge (EPSRC grant no EP/K032208/1), for support and hospitality during the programme “Non-positive curvature, group actions and cohomology”.

\section{Conventions and preliminaries}\label{sec:preliminaries}

\subsection{Topology}
In what follows, $(X,d)$ will always be a metric space. We will sometimes drop the distance function from the notation when no confusion can arise.

The space $X$ is always assumed to be path\=/connected and locally path\=/connected. The extra requirements we need on $X$ will be specified as they vary throughout the paper. They generally include (some version of) either:  rectifiability of paths, geodesicity, compactness and/or semi\=/local simple connectedness.

Let us recall that a continuous path $\gamma\colon[0,1]\to X$ is \emph{rectifiable} if the supremum
\[
 \abs{\gamma}\coloneqq\sup\Bigbrace{\sum_{i=1}^n d\bigparen{\gamma\paren{i-1},\gamma(i)}}
\]
is finite, where the supremum is taken over any finite sequence of times $0=t_0<\cdots<t_n=1$. When this is the case, we say that $\abs{\gamma}$ is the \emph{length} of $\gamma$.

A metric space is \emph{geodesic} if for every couple of points $x,y\in X$ there exists a continuous rectifiable path $\gamma$ connecting them such that $\abs{\gamma}=d(x,y)$. We will need variations on this concept later on (see Definition~\ref{de:theta.geodesic} and~\ref{de:jumping.geodesic}).

The space $X$ is \emph{semi\=/locally simply connected} if every point has a neighbourhood $U\subseteq X$ so that every path contained in $U$ is null\=/homotopic in $X$ (but not necessarily null\=/homotopic in $U$). Recall that this very mild condition is a necessary requirement for the standard proof that a topological space admits a universal cover.

\subsection{Coarse geometry}
Given a constant $C\geq 0$, we say that two maps  $f,g\colon (X,d_X)\to(Y,d_Y)$ between metric spaces are \emph{$C$\=/close} if $d_Y(f(x),g(x))\leq C$ for every $x\in X$.

\begin{de}
 Let $f\colon (X,d_X)\to (Y,d_Y)$ be a (possibly discontinuous) map between metric spaces and let $A,L\geq 0$ be constants. We say that the map $f$ is 
 \begin{itemize}
  \item \emph{$(L,A)$\=/coarsely Lipschitz} if
    \[
    d_Y(f(x),f(x'))\leq Ld_X(x,x')+A   
    \]
    for every $x,x'\in X$;
  \item a \emph{$(L,A)$\=/quasi\=/isometric embedding} if 
    \[
    L^{-1}d_X(x,x')-A\leq d_Y(f(x),f(x'))\leq Ld_X(x,x')+A   
    \] 
    for every $x,x'\in X$;
  \item a \emph{$(L,A)$\=/quasi\=/isometry} if it is $(L,A)$\=/coarsely Lipschitz and there exists an $(L,A)$\=/coarsely Lipschitz quasi\=/inverse $\bar f\colon (Y,d_Y)\to (X,d_X)$ so that $f\circ \bar f$ and $\bar f\circ f$ are $A$\=/close to $\id_Y$ and $\id_X$ respectively.
 \end{itemize}
\end{de}

\begin{rmk}
 This definition of quasi\=/isometry is slightly different from the usual one (coarse surjective quasi\=/isometric embedding). This difference is very much inessential and it only changes the actual values of the constants $A$ and $L$. We choose this definition as it fits more nicely with Lemma~\ref{lem:qi.induce.maps.on.disc.fund.grp}.
\end{rmk}

In general the relation of \emph{coarse equivalence} between metric spaces (see \emph{e.g.} \cite{Roe03}) is weaker than quasi\=/isometry. Still, all the metric spaces considered in this paper will be roughly geodesic (in the sense of Definition~\ref{de:theta.geodesic}) and in this setting coarse equivalences and quasi\=/isometries coincide. With a slight abuse of notation, we will hence continue using the unspoken convention used in the introduction and reserve the term `coarsely equivalent' for families of spaces that are uniformly quasi isometric:

\begin{de}
 We say that two sequences of metric spaces $(X_n,d_{X_n})_{n\in \NN}$ and $(Y_n,d_{Y_n})_{n\in \NN}$ are \emph{coarsely equivalent} if $(X_n,d_{X_n})$ and $(Y_n,d_{Y_n})$ are \emph{uniformly} quasi\=/isometric. That is, if there exist constants $L$ and $A$ such that $(X_n,d_{X_n})$ and $(Y_n,d_{Y_n})$ are $(L,A)$\=/quasi\=/isometric for every $n\in\NN$.
 
 We say that they are \emph{coarsely distinct} if no subsequence of $(X_n,d_{X_n})$ is coarsely equivalent to a subsequence of $(Y_n,d_{Y_n})$.
\end{de}

\subsection{Actions and warped metrics}
In what follows, $S$ will always be a finite set of homeomorphisms of $X$. In particular, the choice of $S$ induces an action by homeomorphisms $F_S\curvearrowright X$ of the free group generated by $S$.

Even when we talk about generic actions of groups $\Gamma\curvearrowright X$ we implicitly assume that a finite generating set $S$ of $\Gamma$ is fixed, and hence this action is just a quotient action of the action of the free group $F_S\curvearrowright X$. 

\begin{rmk}
 Throughout the paper we think of the elements of $S$ as being distinct non\=/trivial homeomorphisms. This is not a requirement, as the formalism that we use still makes sense if $S$ contains duplicates or other trivialities; still we felt useful to point this out as in other works (\emph{e.g.} \cite{Vig18b}) $S$ is used to denote a symmetric generating set of a group $\Gamma$ and is also assumed to contain the identity element.
\end{rmk}

\begin{de}
  Let $S$ be a finite set of homeomorphisms of $(X,d)$. The \emph{warped metric} induced on $X$ by $S$ is the largest metric $\wSdist$ such that 
  \begin{itemize}
   \item $\wSdist(x,y)\leq d(x,y)$ for every $x,y\in X$;
   \item $\wSdist(x,s(x))\leq 1$ for every $s\in S$.
  \end{itemize}
\end{de}

Note that the warped metric $\wSdist$ trivially coincides with the warped metric $\wdist$ (or $d_\Gamma$) as already defined in literature, but makes explicit the dependence on $S$.

From Section~\ref{sec:coarse.fund.grp.wc} on, we will use the following notation: given $t\geq1$, we denote by $\dist[t]=t\cdot \dist$ the distance $\dist$ scaled by $t$ and, given an action $F_S\curvearrowright X$, we let $\wSdist[t]$ denote the warping of the metric $\dist[t]$ induced by the action. The spaces $(X,\wSdist[t])$ are the spaces that we are really going to be interested in and they represent the level sets the warped cone $\cone_S(X)$. See the beginning of Appendix~\ref{sec:appendix} for a discussion on the original definition of warped cones and the relation with their level sets.

\subsection{Box spaces}
Given a residually finite group $\Lambda$, a \emph{residual filtration} is a nested sequence of finite index normal subgroups $\Lambda_{k+1}\lhd\Lambda_k\lhd\Lambda$ such that $\bigcap_{k\in\NN}\Lambda_k=\{e\}$.

\begin{de}
 A \emph{box space} $\Box_{(\Lambda_k)}\Lambda$ of a finitely generated group $\Lambda$ is the family of Cayley graphs of the finite quotients $\bigparen{ {\rm Cay}(\Lambda/\Lambda_k,S)}_{k\in\NN}$, where $(\Lambda_k)_{k\in\NN}$ is a resdual filtration of $\Lambda$.
\end{de}

According to our convention, a sequence $(X_k)_{k\in\NN}$ of metric spaces is coarsely equivalent to a box space $\Box_{(\Lambda_k)}\Lambda$ if and only if $X_k$ is quasi\=/isometric to ${\rm Cay}(\Lambda/\Lambda_k)$ for every $k\in\NN$ and these quasi\=/isometries have uniform constants as $k$ varies.

\begin{rmk}
 In literature, box spaces are often made into a single metric space (as opposed to a family of spaces) by giving the disjoint union of Cayley graphs ${\rm Cay}(\Lambda/\Lambda_k)$ a metric by imposing that the distance between two components be larger than the sum of their diameters. One can hence study the coarse geometry of a box space treating it as a single object.
 
 As long as one is coherent, it does not matter what viewpoint is taken. Indeed, it is shown in \cite{KhVa17} that if two box spaces $\Box_{(\Lambda_k)}\Lambda$ and $\Box_{(\Gamma_k)}\Gamma$ are coarsely equivalent (when seen as metric objects) then, possibly discarding a finite number of indices, one has that ${\rm Cay}(\Lambda/\Lambda_k)$ is uniformly quasi\=/isometric to ${\rm Cay}(\Gamma/\Gamma_k)$.
\end{rmk}

\section{Discrete fundamental group}\label{sec:discrete.fund.groups}
Let $(X,d)$ be a metric space and $\theta>0$ be a fixed a parameter. A \emph{discrete path} at \emph{scale $\theta$} (or \emph{$\theta$\=/path}) is a $\theta$\=/Lipschitz map $Z\colon [n]\to X$ where $[n]\coloneqq \{0,1,2,\ldots,n\}\subset \RR$. Equivalently, $Z$ can be seen as an ordered sequence of points $(z_0,\ldots,z_n)$ in $X$ with $d(z_i,z_{i+1})\leq\theta$; we will use both notations throughout the paper. The space $(X,d)$ is said to be \emph{$\theta$\=/connected} if any two points of $X$ are connected via a $\theta$\=/path. 
\begin{de}\label{de:theta.geodesic}
  A $\theta$\=/connected metric space is \emph{$\theta$\=/geodesic} if any two points $z,z'\in X$ are connected via a $\theta$\=/path $(z_0,\ldots,z_n)$ such that $d(z,z')=d(z_0,z_1)+\cdots+d(z_{n-1},z_n)$.  
\end{de}

A $\theta$\=/path $Z'\colon[m]\to X$ is a \emph{lazy version} (or \emph{lazification}) of $Z$ if it is obtained from it by repeating some values, \emph{i.e.} if $m>n$ and $Z'=Z\circ f$ where $f\colon [m]\to[n]$ is a surjective monotone map. 

Given two $\theta$\=/paths $Z_1 $ and $Z_2$ of the same length $n$, a \emph{free $\theta$\=/grid homotopy} between them is a $\theta$\=/Lipschitz map $\widehat H\colon [n]\times[m]\to X$ such that $\widehat H(\;\!\cdot\;\!,0)=Z_1$ and $\widehat H(\;\!\cdot\;\!,m)=Z_2$. Here the product $[n]\times[m]$ is equipped with the $\ell^1$ metric. A \emph{$\theta$\=/grid homotopy} is a free $\theta$\=/grid homotopy so that $\widehat H(0,t)=Z_1(0)=Z_2(0)$ and $\widehat H(n,t)=Z_1(n)=Z_2(n)$ for every $t\in[m]$. 

\begin{de}
 Two $\theta$\=/paths $Z_1$ and $Z_2$ are \emph{$\theta$\=/homotopic} (denoted by $Z_1\thhom Z_2$) if they are equivalent under the equivalence relation induced by lazifications and $\theta$\=/grid homotopies. Equivalently, $Z_1\thhom Z_2$ if and only if there exist $Z_1'$ and $Z_2'$ lazy versions of $Z_1$ and $Z_2$ which are homotopic via a single $\theta$\=/grid homotopy.
\end{de}

\begin{de}[\cite{BCW14}]
 Let $(X,d)$ be a $\theta$\=/connected metric space and $x_0\in X$ a fixed base point. The \emph{discrete fundamental group at scale $\theta$} is the group $\thgrp\bigparen{(X,d),x_0}$ of $\theta$\=/homotopy classes of closed $\theta$\=/paths with endpoints $x_0$, equipped with the operation of concatenation. When the metric is clear from the context, we will drop it from the notation and simply write $\thgrp(X,x_0)$.
\end{de}

Just as for the usual fundamental group, the isomorphism class of the discrete fundamental group does not depend on the choice of the base point. Moreover, if $x_0$ and $x_0'$ are points at distance at most $\theta$, then the map sending a $\theta$\=/path $(x_0,x_1,\ldots,x_0)$ to the $\theta$\=/path $(x_0',x_0,x_1,\ldots,x_0,x_0')$ induces an isomorphism $I_{(x_0,x_0')}\colon\thgrp(X,x_0)\xrightarrow{\cong}\thgrp(X,x_0')$.

\

If $f\colon (X,d_X)\to (Y,d_Y)$ is a $(L,A)$\=/coarse Lipschitz map and $\theta'\geq L\theta+A$, then $f$ induces a homomorphism $f_*\colon\thgrp(X,x_0)\to\thgrp[\theta'](Y,f(x_0))$ by composition: $f_*([Z])\coloneqq[f\circ Z]$. We collect some basic results about discrete fundamental groups in the following lemma (some of these statements are also proved in \cite{BCW14}). 

\begin{lem}\label{lem:qi.induce.maps.on.disc.fund.grp}
 Let $(X,d_X)$ and $(Y,d_Y)$ be $1$\=/geodesic metric spaces; $x_0\in X$ a fixed base point; $f\colon (X,d_X)\to (Y,d_Y)$ an $(L,A)$\=/coarse Lipschitz map; $\theta,\theta'>1$ constants with $\theta'\geq L\theta+A$ and $f_*\colon\thgrp(X,x_0)\to\thgrp[\theta'](Y,f(x_0))$ the induced homomorphism. Then the following hold.
 \begin{enumerate}[(i)]
  \item\label{item:lem:theta'.close} If $g\colon (X,d_X)\to (Y,d_Y)$ is $(L,A)$\=/coarse Lipschitz and $\theta'$\=/close to $f$, then $g_*=I_{(f(x_0),g(x_0))}\circ f_*$.
  
  \vspace{0.6 ex}
  \item\label{item:lem:surjective} If $f$ is an $(L,A)$\=/quasi\=/isometry, and $\theta\geq L+A$
  then $f_*$ is surjective.
  
  \vspace{0.6 ex}
  \item\label{item:lem:change.parameter} If $\theta'\geq \theta$ then $(\id_X)_*\colon\thgrp(X,x_0)\to\thgrp[\theta'](X,x_0)$ is surjective.
  
  \vspace{0.6 ex}
  \item\label{item:lem:isomorphism} If $f$ is an $(L,A)$\=/quasi isometry, $\theta\geq L+A$ and $(\id_X)_*\colon\thgrp(X,x_0)\to\thgrp[L\theta'+A](X,x_0)$ is an isomorphism, then $f_*$ is an isomorphism.
 \end{enumerate}
\end{lem}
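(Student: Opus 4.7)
The plan is to prove the four parts in the order (i), (iii), (ii), (iv), since (iii) is used in (ii) and both (i) and (ii) feed into (iv); throughout, all homotopies are produced as explicit two-row $\theta'$-grid homotopies.

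For (i), given a closed $\theta$-path $Z=(x_0,x_1,\ldots,x_n=x_0)$, I would lazify $g\circ Z$ by repeating its basepoint once at each end and compare the resulting length-$(n+2)$ loop with the length-$(n+2)$ loop $(g(x_0),f(x_0),f(x_1),\ldots,f(x_n),g(x_0))$ representing $I_{(f(x_0),g(x_0))}(f_*([Z]))$. Both are $\theta'$-paths, and the vertex-by-vertex vertical distances are $0$ at the two endpoints and $\le\theta'$ at the interior vertices (where $f,g$ are $\theta'$-close by hypothesis), so the two-row grid is a valid $\theta'$-grid homotopy. For (iii), given a $\theta'$-loop $W=(x_0,\ldots,x_n=x_0)$ I would replace each edge $(x_i,x_{i+1})$ by a $1$-geodesic $P_i$ in $X$ and let $Z$ be their concatenation; being a $1$-path it is also a $\theta$-path. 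The two-row grid with $Z$ on the bottom and the obvious lazification $W^*$ on top (constantly $x_i$ over the range of positions covered by $P_i$) is a $\theta'$-grid homotopy, the crucial point being that any vertex $p$ of $P_i$ satisfies $d(x_i,p)\le d(x_i,x_{i+1})\le\theta'$ by the $1$-geodesic property.

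For (ii), the naive attempt of setting $\tilde x_i=\bar f(y_i)$ and joining consecutive lifts by $1$-geodesics fails: when one tries to compare $f(Z)$ with $W$ the $Y$-distance between $f$-images of intermediate vertices and the corresponding $y_i$ can be of order $L^2\theta'$, too large for a $\theta'$-grid homotopy. The workaround is to \emph{refine first in $Y$, then lift}: apply the construction of (iii) inside $Y$ to obtain a $1$-path $W^\dagger$ in $Y$ with $W\thhom[\theta']W^\dagger$, and only then set $\tilde x_j=\bar f(W^\dagger(j))$. Consecutive $\tilde x_j$ are then $L+A\le\theta$ apart by the coarse-Lipschitz bound on $\bar f$, so they form a genuine $\theta$-path; prepending and appending an edge of length $\le A\le\theta$ between $x_0$ and $\tilde x_0=\bar f(f(x_0))$ turns it into a $\theta$-loop $Z$ based at $x_0$. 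A two-row grid then compares $f(Z)$ with the obvious lazification of $W^\dagger$ at vertex-distance $\le A\le\theta'$, using that $f\circ\bar f$ is $A$-close to $\id_Y$, and yields $f_*([Z])=[f(Z)]=[W]$.

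Once (ii) is in place, (iv) is a short bootstrap. Surjectivity is (ii). For injectivity, if $f_*([Z])=0$ then applying the $(L,A)$-coarse-Lipschitz map $\bar f$ to the null-$\theta'$-homotopy of $f\circ Z$ produces a null-$(L\theta'+A)$-homotopy of $\bar f\circ f\circ Z$, so $(\bar f\circ f)_*([Z])=0$ in $\thgrp[L\theta'+A](X,\bar f(f(x_0)))$. Since $\bar f\circ f$ is $A$-close to $\id_X$ and the inequality $L\theta'+A\ge L^2\theta+LA+A$ follows from $\theta'\ge L\theta+A$, part (i) gives $(\bar f\circ f)_*=I_{(x_0,\bar f(f(x_0)))}\circ(\id_X)_*$; since $I$ is an isomorphism this forces $(\id_X)_*([Z])=0$ in $\thgrp[L\theta'+A](X,x_0)$, and the injectivity of $(\id_X)_*$ hypothesised in (iv) then yields $[Z]=0$. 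The main obstacle throughout is the parameter bookkeeping, especially in (ii), where one must arrange the refinement and lift so that the $X$-side path lives at the strict scale $\theta$ while the $Y$-side comparisons happen at the laxer scale $\theta'$; the two hypotheses $\theta\ge L+A$ and $\theta'\ge L\theta+A$ are tightly designed for this refine-then-lift strategy to close up.
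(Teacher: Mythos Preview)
Your proof is correct and follows essentially the same approach as the paper: refine $\theta'$-paths to $1$-paths using $1$-geodesicity, lift via the quasi-inverse $\bar f$, and compare via two-row grid homotopies exploiting $A$-closeness. The only cosmetic difference is that the paper proves (ii) first and derives (iii) as the special case $f=\id_X$, while you prove (iii) directly and reuse its refinement step inside (ii); the parameter bookkeeping and the injectivity argument in (iv) via $(\bar f\circ f)_*=I\circ(\id_X)_*$ are identical.
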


\begin{proof}
 (\ref{item:lem:theta'.close}): Given a closed $\theta$\=/path $Z$ with endpoints $x_0$, the path $I_{(g(x_0),g(x_0))}\circ g(Z)$ is a lazification of $g(Z)$. It is now enough to notice that the paths $I_{(f(x_0),g(x_0))}\circ f(Z)$ and $I_{(g(x_0),g(x_0))}\circ g(Z)$ are $\theta'$\=/grid homotopic because the maps $f$ and $g$ are $\theta'$\=/close.
 
 (\ref{item:lem:surjective}): Since $(Y,d_Y)$ is $1$\=/geodesic, any $\theta'$\=/path $Z'$ in $Y$ is $\theta'$\=/homotopic to a $1$\=/path. Indeed, one can concatenate the $1$\=/paths realising the distance between consecutive points $Z'(i),Z'(i+1)$ obtaining this way a $1$\=/path that is $\theta'$\=/homotopic to $Z'$. 
 
 Given an element $[Z']\in \thgrp[\theta'](Y,f(x_0))$, we can assume that $Z'$ is a $1$\=/path. Let $\bar f$ be a quasi\=/inverse of $f$, then $\bar f(Z')$ is a $(L+A)$\=/path and hence a $\theta$\=/path. It is then easy to see that $I_{(\bar f(f(x_0)),x_0)}(\bar f (Z'))$ is a closed $\theta$\=/path based at $x_0$ whose image under $f$ is $\theta'$\=/homotopic to $Z'$. 
 
 (\ref{item:lem:change.parameter}): This is a special case of (\ref{item:lem:surjective}).
 
 (\ref{item:lem:isomorphism}): The quasi\=/inverse $\bar f$ induces a homomorphism $\bar f_*\colon \thgrp[\theta'](Y,f(x_0))\to\thgrp[L\theta'+A](X,\bar f\circ f(x_0))$. As $\bar f\circ f$ is $A$\=/close to $\id_X$ and therefore $\theta$\=/close, by (\ref{item:lem:theta'.close}) we have that $(\bar f\circ f)_*$ coincides with $I_{(x_0,\bar f\circ f(x_0))}\circ (\id_X)_*$ and it is hence an isomorphism. The claim follows because $f_*$ is surjective by (\ref{item:lem:surjective}).
\end{proof}

\section{Jumping-fundamental group}\label{sec:jumping.fund.grp}

Let $S$ be a finite set and let $F_S$ denote the free group freely generated by $S$. Given an action by homeomorphisms $F_S\curvearrowright (X,d)$, let $\wSdist$ denote the \emph{warped metric} induced on $X$ \emph{i.e.} the maximal metric such that $\wSdist\leq d$ and $\wSdist(x,s(x))\leq 1$ for every $s\in S$. In what follows we will always assume that $X$ is path\=/connected.

\begin{de}\label{de:jumping.geodesic}
 A \emph{\jp} in $X$ is a finite sequence of continuous paths $\gamma_0,\ldots,\gamma_n\colon [0,1]\to X$ and elements $\vec{s}_1,\ldots,\vec s_n$ with $\vec s_i=s_i^\pm$ for some $s_i\in S$, such that $\gamma_{i}(0)=\vec s_i\cdot\gamma_{i-1}(1)$ for every $i=1,\ldots,n$. Such \jp will be denoted by
 \[
  \vec{\bm\gamma} \coloneqq \gamma_0 \jto[\vec s_1] \gamma_1 \jto [\vec s_2]\cdots\jto[\vec s_n]\gamma_n.
 \]
 The \emph{jumping pattern} of $\vec{\bm\gamma}$ is the ordered sequence $(\vec s_1,\ldots,\vec s_n)$.
\end{de}

We define the \emph{length} of the \jp $\vec{\bm\gamma}$ as
\[
 \norm{\vec{\bm\gamma}}\coloneqq n+\sum_{i=0}^n\norm{\gamma_i},
\]
where $\norm{\gamma_i}$ is the length of the path $\gamma_i$ in $(X,d)$. Note that if $X$ is a geodesic space the warped distance can then be expressed as
 \[
  \wSdist(x,y)=\inf\big\{\norm{\vec{\bm\gamma}}\bigmid \vec{\bm\gamma}\text{ \jp between $x$ and $y$}\big\}. 
 \]

\begin{de}
 A \jp $\vec{\bm\gamma}$ between two points $x,y\in X$ is \emph{geodesic} if it realises their distance $\norm{\vec{\bm\gamma}}=\wSdist(x,y)$. The warped metric space $(X,\wSdist)$ is \emph{jumping\=/geodesic} if $(X,d)$ is a geodesic metric space and every two points in $(X,\wSdist)$ are joined by a geodesic \jp.
\end{de}

\begin{rmk}
 If $(X,d)$ is a proper geodesic metric space, then the warped space $(X,\wSdist)$ is jumping\=/geodesic. Note also that if $(X,\wSdist)$ is jumping\=/geodesic then it is a $1$\=/geodesic metric space.
\end{rmk}

For notational convenience, if one of the internal paths $\gamma_i$ of a \jp is constant, we will omit it from the notation and simply write $\jto[s_i]\jto[\smash{s_{i+1}}]$. If the constant path is the initial (or terminal) one, we will keep its value in the notation: $x_0\jto[s_1]\cdots$ (or $\cdots\jto[s_n]x_n$). We will denote the concatenation of \jps simply by $\vec{\bm\gamma}_1\vec{\bm\gamma}_2$. 

If a \jp $\vec{\bm\gamma}=\vec{\bm\gamma}_1\!\jto[\vec s]\!\jto[\hspace{1 pt}\vec s^{\hspace{1 pt}-1}]\vec{\bm\gamma}_2$ has two consecutive jumps such that one is the opposite of the other, we say that the \jp $\vec{\bm\gamma}'=\vec{\bm\gamma}_1\vec{\bm\gamma}_2$ obtained skipping those jumps is a \emph{contraction} of $\vec{\bm\gamma}$. Vice versa, adding a consecutive pair of opposite jumps is an \emph{extension}.

\begin{de}
 Two \jps with the same jumping pattern $\gamma_0\jto[\vec s_1]\cdots\jto[\vec s_n]\gamma_n$ and $\gamma'_0\jto[\vec s_1]\cdots\jto[\vec s_n]\gamma'_n$ are \emph{spatially\=/homotopic} if there exist free homotopies $H_i$ between $\gamma_i$ and $\gamma_i'$ so that for every $t\in[0,1]$ and $i=1,\ldots,n$
 \begin{itemize}
  \item $H_0(0,t)=\gamma_0(0)=\gamma'_0(0)$, 
  \item $H_n(1,t)=\gamma_n(1)=\gamma'_n(1)$, 
  \item $H_i(0,t)=\vec s_i\cdot H_{i-1}(1, t)$.
 \end{itemize}  
 Two \jps are \emph{homotopic} if you can transform one to the other with a finite number of space\=/homotopies, contractions and extensions.
\end{de}

The operation of concatenation between \jps is compatible with homotopies. Given a continuous path $\gamma$, we denote by $\gamma^*$ the path obtained walking along $\gamma$ in the opposite direction. Similarly, if $\vec{\bm \gamma}=\gamma_0\jto[\vec s_1]\cdots\jto[\vec s_n]\gamma_n$ is a \jp, we define $\vec{\bm \gamma}^{*}$ as 
\[
 \vec{\bm \gamma}^{*}\coloneqq \gamma_n^{*}\jto[\vec s_n^{\hspace{1 pt}-1}]\cdots\jto[\vec s_0^{\hspace{1 pt}-1}]\gamma_0^{*}.
\]
Note that the concatenation $\vec{\bm \gamma}\vec{\bm\gamma}^{*}$ is homotopic to the constant \jp; we can therefore give the following:

\begin{de}
 The \emph{jumping\=/fundamental group} is the group $\jpgrp(X,x_0)$ of homotopy classes of closed \jps based at a fixed point $x_0\in X$, equipped with the operation of concatenation.
\end{de}
 
The fundamental observation allowing us to compute jumping\=/fundamental groups is the following:

\begin{lem}\label{lem:paths.and.jumps.commute}
 The \jps $\gamma\jto[\vec s_1]x_1$ and $x_0\jto[\vec s_1]\vec s_1(\gamma)$ are homotopic.
\end{lem}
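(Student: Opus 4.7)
The plan is to produce a \emph{spatial} homotopy between the two \jps directly, without resorting to contractions or extensions. This is possible because both \jps involve a single jump with the same label $\vec s_1$, so they share the same jumping pattern and a spatial homotopy is the natural candidate.

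First I would fix notation: set $y_0\coloneqq\gamma(1)$, so that $x_1=\vec s_1(y_0)$. Writing out the data according to Definition~\ref{de:jumping.geodesic}, the first \jp has $\gamma_0=\gamma$ and $\gamma_1$ constant equal to $x_1$, while the second has $\gamma_0$ constant equal to $x_0$ and $\gamma_1=\vec s_1\circ\gamma$. In both cases the jumping pattern is $(\vec s_1)$ and both \jps run from $x_0$ to $x_1$.

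Next I would construct the two free homotopies required by the definition of spatial\=/homotopy. The idea is to gradually ``slide'' the non\=/constant piece $\gamma$ across the jump by shortening it on the left of the jump while it grows on the right. Concretely, set
\[
    H_0(u,t)\coloneqq \gamma\bigparen{u(1-t)},
    \qquad
    H_1(u,t)\coloneqq \vec s_1\bigparen{\gamma(1-t+tu)}.
\]
Then $H_0(\cdot,0)=\gamma$ and $H_0(\cdot,1)$ is the constant path at $x_0$, while $H_1(\cdot,0)$ is the constant path at $x_1$ and $H_1(\cdot,1)=\vec s_1\circ\gamma$. These are continuous since $\vec s_1$ is a homeomorphism.

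Finally I would check the three compatibility conditions for a spatial\=/homotopy: (i) $H_0(0,t)=\gamma(0)=x_0$ for every $t$, so the common starting point is preserved; (ii) $H_1(1,t)=\vec s_1(\gamma(1))=x_1$ for every $t$, so the common endpoint is preserved; (iii) $\vec s_1\cdot H_0(1,t)=\vec s_1(\gamma(1-t))=H_1(0,t)$, which is exactly the consistency across the jump. This confirms that $(H_0,H_1)$ is a spatial homotopy between the two \jps, which gives the desired homotopy.

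I do not anticipate a genuine obstacle: the statement is really a verification that the natural parametrisation works. The only place to be slightly careful is condition (iii), since it is the one encoding that sliding $\gamma$ across the jump via the homeomorphism $\vec s_1$ preserves the jump relation\textemdash once the interpolations $H_0$ and $H_1$ are chosen compatibly (as above), this identity holds automatically.
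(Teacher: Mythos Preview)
Your proof is correct and follows exactly the same approach as the paper: the paper defines the identical homotopies $H_0(r,t)=\gamma(r(1-t))$ and $H_1(r,t)=\vec s_1\bigl(\gamma(1-t+rt)\bigr)$ and simply asserts they form a space\=/homotopy, whereas you additionally spell out the verification of the three compatibility conditions.
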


\begin{proof}
 The functions
 \begin{align*}
  &H_0(r,t)\coloneqq\gamma(r(1-t))  \quad\text{and}\quad H_1(r,t)\coloneqq\vec s_1\big(\gamma(1-t+rt)\big)
 \end{align*}
 define a space\=/homotopy between them.
\end{proof}

\begin{cor}\label{cor:paths.jump.first}
 Every \jp is homotopic to a \jp where all the jumps are performed last: $\gamma\jto[\vec s_1]\jto[\vec s_2]\cdots\jto[\vec s_n]x_0$.
\end{cor}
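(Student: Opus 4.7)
The plan is to use Lemma~\ref{lem:paths.and.jumps.commute} iteratively to push every non\=/initial continuous path $\gamma_i$ to the left of all preceding jumps. Inducting on the number of indices $i\in\{1,\ldots,n\}$ for which $\gamma_i$ is non\=/constant keeps the bookkeeping clean, since constant paths trivially pose no obstruction. The base case (no internal non\=/constant path) is exactly the target form.

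For the inductive step, take the leftmost non\=/constant internal path, say $\gamma_1$ for notational simplicity, and focus on the local segment $\gamma_0\jto[\vec s_1]\gamma_1$. Writing $\gamma_1=\vec s_1\cdot\bigparen{\vec s_1^{-1}(\gamma_1)}$, Lemma~\ref{lem:paths.and.jumps.commute} applied to the jumping\=/path $\vec s_1^{-1}(\gamma_1)\jto[\vec s_1]\gamma_1(1)$ (which starts at $\vec s_1^{-1}\cdot\gamma_1(1)$ rather than at $\gamma_0(0)$, but this is harmless because we are applying the lemma inside the larger concatenation) provides a homotopy with $\gamma_0(1)\jto[\vec s_1]\gamma_1$. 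Concatenating with $\gamma_0$ on the left and with $\jto[\vec s_2]\gamma_2\jto[\vec s_3]\cdots\jto[\vec s_n]\gamma_n$ on the right, we obtain a homotopic jumping\=/path
\[
 \bigpar{\gamma_0\cdot\vec s_1^{-1}(\gamma_1)}\jto[\vec s_1]\jto[\vec s_2]\gamma_2\jto[\vec s_3]\cdots\jto[\vec s_n]\gamma_n,
\]
where the path sitting between $\jto[\vec s_1]$ and $\jto[\vec s_2]$ is constant and has been omitted from the notation in accordance with the convention introduced just after Definition~\ref{de:jumping.geodesic}. The number of non\=/constant internal paths has strictly dropped, so the inductive hypothesis applies and yields the desired form.

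There is no genuine obstacle: the entire argument is a careful accounting of concatenations and of the transformation $\gamma\mapsto\vec s^{-1}(\gamma)$ that the lemma imposes each time a path is pulled across a jump. The only point that deserves a brief remark is that, after iterating, the initial path of the resulting jumping\=/path is explicitly $\gamma_0\cdot\vec s_1^{-1}(\gamma_1)\cdot(\vec s_1\vec s_2)^{-1}(\gamma_2)\cdots(\vec s_1\cdots\vec s_n)^{-1}(\gamma_n)$, and the terminal point is $\gamma_n(1)$ (which equals $x_0$ in the closed case of interest for $\jpgrp(X,x_0)$), so the endpoints are automatically correct.
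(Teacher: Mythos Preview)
Your proof is correct and is exactly the argument the paper intends: the corollary is stated there without proof, as an immediate consequence of iterating Lemma~\ref{lem:paths.and.jumps.commute}. One tiny bookkeeping remark: when the leftmost non\=/constant path is $\gamma_k$ with $k>1$, a \emph{single} application of the lemma only shifts it to position $k-1$ rather than merging it into $\gamma_0$, so the count of non\=/constant internal paths does not yet drop\textemdash but iterating $k$ times (as your closing explicit formula $\gamma_0\cdot\vec s_1^{-1}(\gamma_1)\cdots(\vec s_1\cdots\vec s_n)^{-1}(\gamma_n)$ already makes clear you understand) completes the step.
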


Let $x_0\in X$ be a fixed base point. For every $s\in S$ choose a continuous path $\alpha_s$ joining $x_0$ to $s\cdot x_0$. Then $\alpha_s\jto[s^{-1}]x_0$ is a closed \jp and we can hence define a homomorphism $\psi_S$ of the free group by extension:
\[
\begin{tikzcd}[row sep=0ex]
\psi_S\colon F_S \arrow[r] & \jpgrp(X,x_0) \\
 \hspace{2 em} s \arrow[r, mapsto] & \bigbrack{\alpha_s\jto[s^{-1}]x_0}.
\end{tikzcd}
\]

Recall that in $\jpgrp(X,x_0)$ we have 
\[
 \Bigbrack{\alpha_s\jto[s^{-1}]x_0}^{-1}=\Bigbrack{\bigparen{\alpha_s\jto[s^{-1}]x_0}^* }=\Bigbrack{ x_0\jto[s]\alpha_s^*}.
\]
Let $ \alpha_{s^{-1}}$ be the path $s^{-1}( \alpha_s^*)$. Then by Lemma~\ref{lem:paths.and.jumps.commute} we have
\[
\Bigbrack{x_0\jto[s]\alpha_s^*} =
\Bigbrack{s^{-1}(\alpha_s^*)\jto[s]x_0} = 
\Bigbrack{ \alpha_{s^{-1}}\jto[s]x_0}
\]
and therefore $\psi_S(s^{-1})=\bigbrack{\alpha_{s^{-1}}\jto[s]x_0}$. We can hence continue to use the notation $\vec s$ to denote an element in $S\cup S^{-1}$ and we have $\psi_S(\vec s)=\bigbrack{ \alpha_{\vec s}\jto[\vec s^{\,-1}]x_0}$. 

\begin{convention*}
 Given a word $w=\vec s_1\cdots\vec s_n$ of elements of $S\cup S^{-1}$, we use the shorthand $\jto[w]$ to denote the concatenation $\jto[\vec s_1]\cdots\jto[\vec s_n]$. We denote by $w_{\rm rev}$ the reverse word $w_{\rm rev}\coloneqq \vec s_n\cdots \vec s_1$, so that $\jto[w_{\rm rev}]$ is short for $\jto[\vec s_n]\cdots\jto[\vec s_1]$.
\end{convention*}

If $w=\vec s_1\cdots\vec s_n$ is a word of elements of $S\cup S^{-1}$, we can define the path $\alpha_w$ in $X$ as the concatenation
\[
 \alpha_w\coloneqq \big(\alpha_{\vec s_1}\big)
    \big(\vec s_{1}(\alpha_{\vec s_{2}})\big) \cdots
    \big(\vec s_{1}\circ \cdots \circ \vec s_{n-1}(\alpha_{\vec s_n})\big).
\]
Note that we have $\alpha_{w_1w_2}=\alpha_{w_1}w_1(\alpha_{w_2})$ and $\alpha_{w^{-1}}=w^{-1}\paren{\alpha_w^*}$.

Lemma~\ref{lem:paths.and.jumps.commute} now implies that $\psi_S(w)$ is the homotopy class $\big[\alpha_w\jto[w_{\rm rev}^{-1}]x_0\big]$.

\begin{rmk}
 Here $w$ is denoting both a word with letters in $S\cup S^{-1}$ and the corresponding element in the free group $F_S$. This ambiguity does not cause any trouble because paths associated with equivalent words are homotopic.
\end{rmk}

The choice of the paths $\alpha_s$ also induces a homomorphism $\phi_S\colon F_S \rightarrow \aut\bigparen{\pi_1(X,x_0)}$ where $\phi_S(s)$ is the automorphism given by
\[
 \phi_S(s)[\gamma]\coloneqq \big[(\alpha_{s}) s(\gamma)(\alpha_{s}^*)\big]
\]
for every $[\gamma]\in\pi_1(X,x_0)$. Just as before, it is easy to check that for every word $w=\vec s_1\cdots\vec s_n$ we have
\[
 \phi_S(w)[\gamma]= \big[\alpha_{w} w(\gamma)\alpha_{w}^*\big].
\]

\begin{thm}\label{thm:jump.fund.grp_iso_semidirect.prod}
The choice of paths $\alpha_s$ induce an isomorphism of groups 
\[
\begin{tikzcd}[row sep=0ex]
  \hspace{2 em}
\Phi_S\colon \pi_1(X,x_0)\rtimes_{\phi_S}F_S \arrow[r] & \jpgrp(X,x_0), \\
\end{tikzcd}
\]
where the map $\Phi_S$ sends $([\gamma],w)$ to $\big[\gamma\alpha_w\jto[w_{\rm rev}^{-1}]x_0\big]$.
\end{thm}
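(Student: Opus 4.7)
The plan is to show $\Phi_S$ is a bijective group homomorphism by exhibiting an explicit inverse $\Psi_S$ and verifying both composition identities.

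\textbf{Well-definedness and homomorphism property of $\Phi_S$.} First I would observe that a $\pi_1$-homotopy $\gamma \sim \gamma'$ yields a spatial homotopy of the corresponding closed \jps, so $\Phi_S$ descends to the semidirect product. For the homomorphism property, the key computation is to push a continuous path past a sequence of jumps. Starting from
\[
\Phi_S([\gamma_1], w_1)\,\Phi_S([\gamma_2], w_2) = \Bigbrack{\gamma_1\,\alpha_{w_1} \jto[w_{1,\rm rev}^{-1}] \gamma_2\,\alpha_{w_2} \jto[w_{2,\rm rev}^{-1}] x_0},
\]
I would iterate Lemma~\ref{lem:paths.and.jumps.commute} to move $\gamma_2\,\alpha_{w_2}$ to the left of the jump block $\jto[w_{1,\rm rev}^{-1}]$, at the cost of replacing it by $w_1(\gamma_2\,\alpha_{w_2})$. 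Merging the two jump sequences and using the identity $\alpha_{w_1 w_2} = \alpha_{w_1}\, w_1(\alpha_{w_2})$ together with the definition of $\phi_S$ identifies this expression with $\Phi_S\bigparen{[\gamma_1]\,\phi_S(w_1)[\gamma_2],\, w_1 w_2}$, as desired.

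\textbf{Construction of the inverse and surjectivity.} By Corollary~\ref{cor:paths.jump.first}, every class in $\jpgrp(X, x_0)$ has a representative of the form $\gamma \jto[u] x_0$ with $u$ a word in $S \cup S^{-1}$; closedness of the \jp forces $\gamma$ to end at $u_{\rm rev}^{-1}\cdot x_0$. I would set $w \coloneqq u_{\rm rev}^{-1}$ and $[\eta] \coloneqq [\gamma\,\alpha_w^*] \in \pi_1(X, x_0)$, and define a candidate inverse $\Psi_S([\vec{\bm\gamma}]) \coloneqq ([\eta], w)$. By construction one has $\Phi_S \circ \Psi_S = \id$ at the level of representatives, which in particular establishes surjectivity of $\Phi_S$.

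\textbf{Injectivity via well-definedness of $\Psi_S$.} The main obstacle is showing that $\Psi_S$ is independent of the chosen representative. For the $F_S$-coordinate, I would introduce the homomorphism $\rho \colon \jpgrp(X, x_0) \to F_S$ that sends a \jp to the forward product of its jumping pattern: this is well-defined because spatial homotopies preserve the pattern, and contractions/extensions correspond precisely to free reductions in $F_S$. Since $\rho \circ \Phi_S$ is the bijection $([\gamma], w) \mapsto w_{\rm rev}^{-1}$ on the $F_S$-factor, the component $w$ is indeed determined by the class. Invariance of $[\eta]$ in $\pi_1(X, x_0)$ is more delicate, and I would verify it operation by operation: spatial homotopies of representatives of the form $\gamma \jto[u] x_0$ plainly preserve $[\gamma\,\alpha_w^*]$, while each elementary contraction or extension alters both $\gamma$ and $u$ in a compensating way, using Lemma~\ref{lem:paths.and.jumps.commute} to absorb a jump pair $\jto[\vec s]\jto[\vec s^{-1}]$ into a null-homotopic modification of $\gamma$ involving $\alpha_{\vec s}$ and $\alpha_{\vec s}^*$. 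This last check is the technical heart of the argument and requires careful bookkeeping of how $\gamma$ is modified under each elementary operation; once it is in place, $\Psi_S \circ \Phi_S = \id$ is immediate and we conclude that $\Phi_S$ is an isomorphism.
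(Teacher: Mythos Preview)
Your proposal is correct and rests on the same ingredients as the paper (Lemma~\ref{lem:paths.and.jumps.commute}, Corollary~\ref{cor:paths.jump.first}, and the observation that the jumping pattern is well-defined in $F_S$ modulo free reduction). The organization differs in two places. For the homomorphism property, the paper verifies the single conjugation identity $\psi_S(s)[\gamma]\psi_S(s)^{-1}=\phi_S(s)[\gamma]$ and then invokes the universal property of the semidirect product, rather than pushing through the full product formula as you do. For injectivity, the paper checks directly that $\ker\Phi_S$ is trivial: your homomorphism $\rho$ is exactly what forces $w=e$ for any element of the kernel, and then one only needs that the natural inclusion $\pi_1(X,x_0)\hookrightarrow\jpgrp(X,x_0)$ is injective, which the paper dispatches in one line.

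Your explicit-inverse route is equivalent but heavier, and there is one point to be careful about: a chain of elementary moves between two normal-form representatives $\gamma\jto[u]x_0$ need not remain in normal form (an extension may insert a jump pair inside $\gamma$, splitting it). So checking invariance of $[\eta]$ ``operation by operation on normal-form representatives'' is not literally sufficient; you must either extend $\Psi_S$ to arbitrary jumping-paths (re-normalizing via Corollary~\ref{cor:paths.jump.first} after each move and tracking the effect on $[\eta]$), or argue that any such chain can be replaced by one that stays in normal form. This is exactly the ``careful bookkeeping'' you flag, and it is avoided entirely by the paper's kernel argument.
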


\begin{proof}
The fundamental group $\pi_1(X,x_0)$ is a natural subgroup of $\jpgrp(X,x_0)$ and we already noted that $\psi_S\colon F_S\to\jpgrp(X,x_0)$ is a homomorphism. From Lemma~\ref{lem:paths.and.jumps.commute} it follows that 
 \begin{align*}
  \Bigbrack{\psi_S(s) \gamma\psi_S(s^{-1})} 
    &= \Bigbrack{\alpha_s\jto[s^{-1}] \gamma \alpha_{s^{-1}}\jto[s]x_0 } \\
    &= \Bigbrack{ \alpha_s s(\gamma)s(\alpha_{s^{-1}})\jto[s]\jto[s^{-1}]x_0}\\
    &=  \phi_S(s)[\gamma].
 \end{align*}

 Since $S$ is a generating set, we deduce that $\bigbrack{\psi_S(w)\gamma\psi_S(w^{-1})}=\phi_S(w)\brackets{\gamma}$ for every $w\in F_S$ and hence $\psi_S$ and the inclusion homomorphism induce a homomorphism of groups $\Phi_S\colon \pi_1(X,x_0)\rtimes_{\phi_S}F_S \to \jpgrp(X,x_0)$.
 The expression for $\Phi_S$ follows from the previous discussion, as $\Phi_S([\gamma],w)=\Phi_S([\gamma],e)\Phi_S(e,w)=[\gamma]\psi_S(w)$.
 
 We can promptly show that the map $\Phi_S$ is injective: by our definition of homotopy of \jps, if a \jp $\vec{\bm\gamma}$ is homotopic to a constant path then its jumping pattern must eventually reduce to the trivial word via cancellation of consecutive opposite jumps. Therefore, if $\Phi_S([\gamma],w)=e$, the word $w$ must represent the trivial element in $F_S$; and it is easy to see that $\Phi_S([\gamma],e)=e$ if and only if $[\gamma]$ is also trivial. 
 
 Surjectivity also follows from Lemma~\ref{lem:paths.and.jumps.commute}. Indeed, by Corollary~\ref{cor:paths.jump.first} we know that every \jp is homotopic to a \jp where all the jumps are performed last $\vec{\bm\gamma}=\gamma\jto[\vec s_1]\jto[\vec s_2]\cdots\jto[\vec s_n]x_0$. Then, letting $w=\vec s_1^{\, -1}\cdots\vec s_n^{\, -1}$ we have
 \[
  \bigbrack{\vec{\bm\gamma}}=\bigbrack{\gamma\alpha_w^*\alpha_w\jto[w_{\rm rev}^{-1}]x_0}=\Phi_S\bigparen{[\gamma\alpha_w^*],w}.
 \]
\end{proof}

\section{Discretisations of jumping-paths}\label{sec:discretisations}

From now on we will assume that the parameter $\theta$ is greater than or equal to $1$ and that every continuous path in $(X,d)$ is homotopic to a path of finite length (we say that $(X,d)$ has \emph{homotopy rectifiable paths}). 

Given a continuous path $\gamma\colon [0,1]\to X$, a \emph{$\theta$\=/discretisation} for it is a $\theta$\=/path $\widehat\gamma$ of the form $\bigparen{\gamma(t_0), \gamma(t_1),\ldots,\gamma(t_n)}$ where the times $t_i$ are chosen in such a way that $t_0=0$, $t_n=1$ and $\gamma([t_{i-1},t_i])$ is contained in the closed ball $\overline B\bigparen{\gamma(t_{i-1}),\theta}$ for every $i=1,\ldots,n$.

\begin{rmk}
 It would be notationally more correct to denote $\theta$\=/discretisations by $\widehat\gamma^{\scriptscriptstyle \theta}$ (as we do in \cite{Vig18c}). Still, in most of what follows the parameter $\theta$ will be fixed and hence we decided to drop it from the notation for aesthetic reasons.  
\end{rmk}

It is proved in \cite{Vig18c} (and it is implicit in \cite{BCW14}) that the $\theta$\=/homotopy class of $\widehat\gamma$ does not depend on the specific choice of the times $t_i$. Moreover, if two continuous paths $\gamma$ and $\gamma'$ are (freely) homotopic then any two $\theta$\=/discretisations $\widehat\gamma$ and $\widehat\gamma'$ are (freely) $\theta$\=/homotopic.

Let $F_S\curvearrowright (X,d)$ be a continuous action. Since $\theta\geq 1$, any jump $x\jto[\vec s] \vec s\cdot x$ can be seen as a $\theta$\=/path of length one in $(X,\wSdist)$. We can therefore define the discretisation of a \jp $\vec{\bm \gamma}=\gamma_0\jto[\vec s_1]\cdots\jto[\vec s_n]\gamma_n$ as the concatenation of the discretisations of its continuous pieces and jumps:
\[
 \widehat{\bm \gamma}\coloneqq\widehat\gamma_0\jto[\vec s_1]\cdots\jto[\vec s_n]\widehat\gamma_n.
\]
Note that here the discretisation of the continuous pieces is done using the metric $d$ and not the warped distance $\wSdist$, as we find the geometry of the former more intuitive.

It is still true that the $\theta$\=/homotopy class of a $\theta$\=/discretisation only depends on the homotopy class of the \jp:

\begin{lem}\label{lem:homotopic.paths_homotopic.discretisations}
If two \jps $\vec{\bm\gamma}=\gamma_0\jto[\vec s_1]\cdots\jto[\vec s_n]\gamma_n$ and $\vec{\bm\gamma}'=\gamma_0'\jto[\vec s_1]\cdots\jto[\vec s_m]\gamma_m'$ are homotopic then their discretisations $\widehat{\bm \gamma}$ and $\widehat{\bm \gamma}'$ are $\theta$\=/homotopic.
\end{lem}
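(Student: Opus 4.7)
The plan is to verify the claim on the generators of the homotopy equivalence relation on jumping-paths, namely spatial-homotopies, contractions, and extensions. Since contractions and extensions are mutually inverse, it suffices to treat spatial-homotopies and contractions.

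\textbf{Contractions.} Suppose $\vec{\bm\gamma}$ contains a fragment $\jto[\vec s]\gamma_i\jto[\vec s^{\hspace{1 pt}-1}]$ in which $\gamma_i$ is the constant path at $\vec s\cdot a$, so that the contracted $\vec{\bm\gamma}'$ has the adjacent continuous pieces glued at $a$. The $\theta$-discretisation of the fragment contributes (up to a lazification of the constant $\gamma_i$) the sub-$\theta$-path $(a,\vec s\cdot a,a)$, while the discretisation of $\vec{\bm\gamma}'$ can be chosen so as to contribute the single vertex $(a)$ at the same spot (by \cite{Vig18c} the resulting $\theta$-homotopy class does not depend on this choice). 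Since $\wSdist(a,\vec s\cdot a)\leq 1\leq\theta$, the grid
\[
\widehat H(0,\cdot)\equiv a,\quad \widehat H(2,\cdot)\equiv a,\quad \widehat H(1,0)=\vec s\cdot a,\quad \widehat H(1,1)=a
\]
is a $\theta$-grid homotopy between $(a,\vec s\cdot a,a)$ and the lazification $(a,a,a)$ of $(a)$. As nothing outside the fragment changes, the two full discretisations are $\theta$-homotopic.

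\textbf{Spatial homotopies.} I would discretise each of the free homotopies $H_i\colon [0,1]^2\to X$ directly into a $\theta$-Lipschitz grid map $\widehat H_i\colon [n_i]\times [m]\to X$, using a single time-partition $0=\tau_0<\tau_1<\cdots<\tau_m=1$ common to all $i$ and independently chosen space-partitions refined enough that the $\theta$-Lipschitz condition holds. The bottom and top rows of $\widehat H_i$ are then $\theta$-discretisations of $\gamma_i$ and $\gamma_i'$ respectively, and by the invariance result of \cite{Vig18c} we may take them to coincide with our chosen $\widehat\gamma_i$ and $\widehat\gamma_i'$. The compatibility $H_i(0,t)=\vec s_i\cdot H_{i-1}(1,t)$ says that the left column of $\widehat H_i$ is the $\vec s_i$-translate of the right column of $\widehat H_{i-1}$ at every height $\tau_k$. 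Placing the $\widehat H_i$ side by side and inserting, between consecutive ones, a single column of $\vec s_i$-jumps (horizontal warped steps of length~$1$) produces a grid map that is $\theta$-Lipschitz in both directions. The first and last columns are constant thanks to $H_0(0,t)=\gamma_0(0)$ and $H_n(1,t)=\gamma_n(1)$, hence this is a genuine $\theta$-grid homotopy between $\widehat{\bm\gamma}$ and $\widehat{\bm\gamma}'$.

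The main obstacle lies in the spatial-homotopy case and is essentially bookkeeping: one needs a common vertical height $m$ for all $H_i$, one wants the bottom and top rows to match the prescribed discretisations $\widehat\gamma_i$ and $\widehat\gamma_i'$, and the inserted jump columns must plug cleanly into a uniformly $\theta$-Lipschitz grid. Both issues dissolve by exploiting the flexibility granted by \cite{Vig18c}: since the $\theta$-homotopy class of a discretisation is independent of the chosen sampling times, one may pick whatever partitions make the combinatorics work, and the jump columns impose no additional constraint as their horizontal and vertical transitions all have warped length at most $1\leq\theta$.
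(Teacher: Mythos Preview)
Your proof is correct and follows essentially the same approach as the paper. The paper dismisses the contraction/extension case as ``clear'' and then, for spatial homotopies, discretises each $H_i$ on a uniform $[N]\times[N]$ grid (a single $N$ chosen by compactness for all $i$ simultaneously) before concatenating via the jump columns; you allow different horizontal widths $n_i$ with a common height $m$, which is a harmless variation of the same construction.
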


\begin{proof}
 It is clear that contractions and expansions of \jps produce $\theta$\=/homotopic \jps. It is therefore enough to prove the statement when $m=n$ and $\vec{\bm\gamma}$ and $\vec{\bm\gamma}'$ are spatially\=/homotopic.

 Let $H_i\colon[0,1]^2\to X$ be the homotopy between $\gamma_i$ and $\gamma_i'$. By compactness, it is easy to see (see \cite{Vig18c} for details) that there exists an $N\in\NN$ large enough so that the maps $\widehat H_i\colon [N]^2\to X$ defined by
 \[
  \widehat H_i(p,q)\coloneqq H_i\left(\frac{p}{N},\frac{q}{N}\right)
 \]
 are free $\theta$\=/grid homotopies between the extremal $\theta$\=/paths $\widehat H_i(\;\!\cdot\;\!,0)\colon [N]\to X$ and $\widehat H_i(\;\!\cdot\;\!,N)\colon[N]\to X$, and moreover the $\theta$\=/paths $\widehat H_i(\;\!\cdot\;\!,0)$ and $\widehat H_i(\;\!\cdot\;\!,N)$ are $\theta$\=/discretisations of $\gamma_i$ and $\gamma_i'$ respectively.
 
 Note that the $\theta$\=/paths $\widehat H_i(\;\!\cdot\;\!,0)$ are concatenated via the jumps $\vec s_i$ (which can be seen as a step of a $\theta$\=/path) and the same goes for the $\theta$\=/paths $\widehat H_i(\;\!\cdot\;\!,N)$ and the $\theta$\=/grid homotopies $\widehat H_i$ as well. 
 Therefore, by concatenation we obtain two $\theta$\=/paths 
 $\widehat H_0(\;\!\cdot\;\!,0)\jto[\vec s_1]\widehat H_1(\;\!\cdot\;\!,0)\jto\cdots\jto\widehat H_N(\;\!\cdot\;\!,0)$ and 
 $\widehat H_0(\;\!\cdot\;\!,N)\jto[\vec s_1]\widehat H_1(\;\!\cdot\;\!,N)\jto\cdots\jto\widehat H_N(\;\!\cdot\;\!,N)$ which are $\theta$\=/homotopic $\theta$\=/discretisations of $\vec{\bm\gamma}$ and $\vec{\bm\gamma}'$. Then the lemma easily follows from the fact that different discretisations of the same continuous path are $\theta$\=/homotopic.
\end{proof}

From Lemma~\ref{lem:homotopic.paths_homotopic.discretisations} it follows that the discretisation procedure induces a well\=/defined discretisation map $\discrmap\colon\jpgrp(X,x_0)\to\thgrp\bigparen{(X,\wSdist),x_0}$. As it is clear that the discretisation of a concatenation of \jps is $\theta$\=/homotopic to the concatenation of their discretisations, the discretisation map $\discrmap$ is a homomorphism.

\

Let $\CT_\theta$ be the set of closed \jps which are compositions of $4$\textemdash non necessarily closed\textemdash \jps of length at most $\theta$:
\[
  \CT_\theta\coloneqq\bigbrace{\vec{\bm\gamma}\bigmid 
    \vec{\bm \gamma}\text{ closed, }\vec{\bm \gamma}=\vec{\bm \gamma}_0\vec{\bm \gamma}_1\vec{\bm \gamma}_2\vec{\bm \gamma}_3
    \text{ with } \norm{\vec{\bm\gamma}_i}\leq\theta
    }
\]
and let ${\rm FT}_\theta\subseteq \jpgrp(X,x_0)$ be the set of homotopy classes of the \jps freely homotopic to \jps in $\CT_\theta$:
\[
 {\rm FT}_\theta\coloneqq\bigbrace{\bigbrack{\vec{\bm\gamma}}\bigmid 
    \vec{\bm\gamma}\frhom \vec{\bm \gamma}'\text{ for some }\vec{\bm\gamma}'\in\CT_\theta},
\]
where a \emph{free homotopy} (denoted $\frhom$) is a homotopy of closed \jps where the space\=/homotopies are not required to keep the endpoints fixed as long as $H_0(0,t)=H_n(1,t)$ for every $t\in[0,1]$. Moreover, cyclic contractions and extensions are allowed as well (\emph{i.e.} $x_0\jto[\vec s]\vec{\bm \gamma}\jto[\vec s^{\, -1}]x_0$ is freely homotopic to $\vec{\bm \gamma}$).

Note that the set ${\rm FT}_\theta$ is invariant under conjugation in $\jpgrp(X,x_0)$.

\begin{thm}\label{thm:discrete.fund.group_iso_quotient.of.jump.fund.grp}
 Assume that $(X,d)$ has homotopy rectifiable paths and that the warped space $(X,\wSdist)$ is jumping\=/geodesic. Then the discretisation homomorphism $\discrmap$ is surjective and its kernel is generated by ${\rm FT}_\theta$. Therefore we have
 \[
  \thgrp\bigparen{(X,\wSdist),x_0}\cong \jpgrp(X,x_0)/\angles{{\rm FT}_\theta}.
 \]
\end{thm}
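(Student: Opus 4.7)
The plan is to prove surjectivity of $\discrmap$ and the two containments $\langle {\rm FT}_\theta \rangle \subseteq \ker(\discrmap) \subseteq \langle {\rm FT}_\theta \rangle$ separately. The main technical device throughout is a ``star contraction'' lemma: any $\theta$-path $(z,y_1,\ldots,y_{k-1},z')$ whose vertices all lie in the $\wSdist$-ball of radius $\theta$ about $z$ is $\theta$-homotopic to the single step $(z,z')$, via the grid whose $t$-th row is $(z,\ldots,z,y_{t+1},\ldots,y_{k-1},z')$. This applies in particular to the $\theta$-discretisation of any jumping-path of length $\leq \theta$, since every vertex of such a discretisation lies within $\wSdist$-distance $\theta$ of the starting point.

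For surjectivity, given a closed $\theta$-path $Z=(z_0,\ldots,z_n)$ at $x_0$, I would use jumping-geodesicity to lift each step $(z_i,z_{i+1})$ to a jumping-geodesic $\vec{\bm\eta}_i$ of length $\leq\theta$ and concatenate into a closed jumping-path $\vec{\bm\gamma}$; star contraction on each $\widehat{\vec{\bm\eta}}_i$ then gives $\widehat{\vec{\bm\gamma}}\thhom Z$. For the inclusion $\langle {\rm FT}_\theta\rangle\subseteq\ker(\discrmap)$, given $\vec{\bm\gamma}=\vec{\bm\gamma}_0\vec{\bm\gamma}_1\vec{\bm\gamma}_2\vec{\bm\gamma}_3\in \CT_\theta$ with junction points $z_0,z_1,z_2,z_3,z_4=z_0$, star contraction reduces $\widehat{\vec{\bm\gamma}}$ to the $4$-cycle $(z_0,z_1,z_2,z_3,z_0)$. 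Since $\wSdist(z_0,z_2)\leq 2\theta$ and $(X,\wSdist)$ is $1$-geodesic, I pick $m\in X$ with $\wSdist(z_0,m),\wSdist(m,z_2)\leq\theta$; the three-row grid with successive rows $(z_0,z_1,z_2,z_3,z_0)$, $(z_0,z_0,m,z_0,z_0)$, $(z_0,z_0,z_0,z_0,z_0)$ is then a valid $\theta$-null-homotopy of this $4$-cycle. Together with the conjugation-invariance of ${\rm FT}_\theta$ in $\jpgrp(X,x_0)$ (free homotopy realises basepoint-change conjugation), this yields the containment.

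For the reverse inclusion, given $[\vec{\bm\gamma}]\in\ker(\discrmap)$, I would pick a $\theta$-grid null-homotopy $\widehat H\colon [n]\times[m]\to X$ of a lazification of $\widehat{\vec{\bm\gamma}}$, with top and side rows constant at $x_0$, and lift $\widehat H$ to the jumping-world. Concretely, for each vertex $(i,j)$ I choose a ``spoke'' $\vec{\bm\sigma}_{(i,j)}$, a jumping-path from $x_0$ to $\widehat H(i,j)$, insisting that $\vec{\bm\sigma}_{(i,0)}$ equals the initial segment of $\vec{\bm\gamma}$ up to the corresponding vertex and that $\vec{\bm\sigma}_{(i,j)}$ is trivial whenever $\widehat H(i,j)=x_0$; for each grid edge I choose a jumping-geodesic of length $\leq\theta$, with bottom-row edges taken to coincide with the actual pieces (or jumps) of $\vec{\bm\gamma}$. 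Each unit square $Q$ then yields a ${\rm FT}_\theta$-class $[\vec{\bm\lambda}_Q]$, namely the $\vec{\bm\sigma}$-conjugate of the $4$-piece closed jumping-path around $\partial Q$. A standard van Kampen telescoping argument, multiplying the $[\vec{\bm\lambda}_Q]$ in an order that cancels each interior edge against its opposite in the neighbouring square, produces $[\vec{\bm\gamma}]=\prod_Q [\vec{\bm\lambda}_Q]\in\langle {\rm FT}_\theta\rangle$.

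The main obstacle is this final step. The delicate points are (a) choosing the spokes and edge lifts consistently so that the bottom row exactly reconstructs $\vec{\bm\gamma}$ rather than some other preimage under $\discrmap$, and (b) verifying the combinatorial cancellation identity in $\jpgrp$, which is the jumping-world analogue of the familiar fact that the boundary of a rectangle factors as the ordered product of the boundaries of its unit subsquares.
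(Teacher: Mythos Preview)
Your overall architecture is exactly the paper's: lift each step of a $\theta$-path to a jumping-geodesic to get surjectivity, collapse the $4$-cycle to kill $\CT_\theta$, and for the reverse inclusion fill a $\theta$-grid null-homotopy square by square with jumping-geodesics, reading off each unit square as an element of ${\rm FT}_\theta$ (the paper organises the same telescoping row by row rather than with spokes, but that is cosmetic). Your two ``delicate points'' (a) and (b) are genuinely routine.

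There is, however, one real gap, and it is not among the obstacles you list. When you assign to a bottom-row edge ``the actual piece (or jump) of $\vec{\bm\gamma}$'' between two consecutive discretisation points, that piece is only guaranteed to lie in a $\wSdist$-ball of radius $\theta$ (this is all the definition of $\theta$-discretisation gives you); its \emph{length} may be arbitrarily large. Hence the boundary of a bottom-row square need not decompose as four jumping-paths of length $\leq\theta$, and your claim ``each unit square $Q$ yields a ${\rm FT}_\theta$-class'' is unjustified precisely along the bottom row. This is exactly where the hypothesis that $(X,d)$ has homotopy rectifiable paths is used, and you never invoke it: one first replaces $\vec{\bm\gamma}$ by a homotopic representative whose continuous pieces are rectifiable, and then subdivides each bottom-row piece $\beta_{j-1}^j$ into finitely many sub-paths of length $\leq\theta$, producing extra small cells that are honestly in $\CT_\theta$. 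The paper carries this out explicitly (see the paragraph about $\beta_{j-1}^j$ and Figure~\ref{fig:subdividing.geodesic}).

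As a minor remark, your midpoint $m$ in the null-homotopy of the $4$-cycle is unnecessary: the paper uses the intermediate row $(z_0,z_1,z_1,z_0,z_0)$, which already has all required distances $\leq\theta$ without appealing to $1$-geodesicity.
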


\begin{proof}
 Let $z\colon [n]\to (X,\wSdist)$ be any $\theta$\=/path. As $(X,\wSdist)$ is jumping\=/geodesic, we can choose for every $i=1,\ldots,n$ a geodesic \jp $\vec{\bm\zeta}_i$ between $z_{i-1}$ and $z_i$. We denote the composition of these paths by $z_{geo}\coloneqq\vec{\bm\zeta}_1\cdots\vec{\bm\zeta}_n$. Note that $z\thhom \widehat z_{geo}$ via the map sending the last point of $\widehat{\bm\zeta}_i$ to $z_i$ and all the others to $z_{i-1}$. This proves the surjectivity of $\discrmap$.
 
 We now have to study the kernel. Let $\vec{\bm\gamma}=\vec{\bm\gamma}_0\vec{\bm\gamma}_1\vec{\bm\gamma}_2\vec{\bm\gamma}_3$ be a \jp in $\CT_\theta$ and for each $0\leq i\leq 3$ let $z_i=\vec{\bm\gamma}_i(0)\in X$ be the starting point of $\vec{\bm\gamma}_i$. Let also $z_4\coloneqq z_0$; then the sequence $(z_0,\ldots,z_4)$ is a closed $\theta$\=/path and it is easy to see that it is $\theta$\=/homotopic to a constant path as
 \[
  (z_0,z_1,z_2,z_3,z_4=z_0)\thhom(z_0,z_1,z_1,z_0,z_0)\thhom(z_0,z_0,z_0,z_0,z_0)
 \]
 are all $\theta$\=/grid homotopies. As above, we also have $\widehat{\bm\gamma}_i\thhom(z_i,z_{i+1})$ and therefore the discretised path $\widehat{\bm\gamma}$ is itself $\theta$\=/homotopic to the constant path.
 
 If a \jp $\vec{\bm\gamma}$ is freely homotopic to $\vec{\bm\gamma}'\in\CT_\theta$, tracing the movement of the base point under the free homotopy we obtain a \jp $\vec{\bm\beta}$ joining $x_0$ to $\vec{\bm\gamma}'(0)$ and we see that $\vec{\bm\gamma}$ is genuinely homotopic to $\vec{\bm\beta}\vec{\bm\gamma}'\vec{\bm\beta}^*$. We deduce that 
 \[
  \widehat{\bm\gamma}\thhom \widehat{\bm\beta}\widehat{\bm\gamma}'\widehat{\bm\beta}^*
    \thhom \widehat{\bm\beta}\widehat{\bm\beta}^* \thhom x_0
 \]
 and therefore ${\rm FT}_\theta\subseteq\ker(\widehat\Phi_S)$.

For the inverse inclusion, we begin by showing that if two $\theta$\=/paths are $\theta$\=/homotopic via a $1$\=/step $\theta$\=/grid homotopy, then their `geodesifications' differ by a product of paths in $\rm TF_\theta$. Specifically, let $z,z'\colon [n]\to (X,\wSdist)$ be closed $\theta$\=/paths with base point $x_0$ and so that $\wSdist(z_i,z'_i)\leq\theta$ for every $i\in[n]$. 
As above, let $z_{geo}\coloneqq\vec{\bm\zeta}_1\cdots\vec{\bm\zeta}_n$ and $z'_{geo}\coloneqq\vec{\bm\zeta}'_1\cdots\vec{\bm\zeta}'_n$ be concatenations of geodesic \jps. Choose geodesic \jps $\vec{\bm\varepsilon}_i$ joining $z_i$ to $z'_i$ and let $ \vec{\bm\xi}_i\coloneqq\vec{\bm\zeta}^*_i\vec{\bm\varepsilon}_{i-1}\vec{\bm\zeta}'_i\vec{\bm\varepsilon}_{i}^*$; then the \jp $z_{geo}'$ is homotopic to the composition $\vec{\bm\zeta}_1\vec{\bm\xi}_1\cdots\vec{\bm\zeta}_n\vec{\bm\xi}_n$ (see Figure~\ref{fig:geodesifications.of.homotopy}).

\begin{figure}
 \centering
   \begin{tikzpicture}[scale=0.98]
    \draw [thick,
	    postaction={decorate}, decoration={markings,
	      mark=between positions 0.1 and 0.95 step 0.204 with {\arrow{stealth};}}
	  ]
	  (0,0) ..controls (1.5,-0.9) and (6.5,-0.9).. (8,0)
	  node[pos=0,bnode,inner sep=0.6]{} node[pos=0,left]{$x_0=z_0=z_0'$}
	  node[pos=0.23,bnode,inner sep=0.6](l1){}node[pos=0.23,below]{$z_1$}
	  node[pos=0.11,below]{$\vec{\bm\zeta}_1$}
	  node[pos=0.41,bnode,inner sep=0.6](l2){}node[pos=0.41,below]{$z_2$}
	  node[pos=0.32,below]{$\vec{\bm\zeta}_2$}
	  node[pos=0.59,bnode,inner sep=0.6](l3){}node[pos=0.59,below]{$\vphantom{z}\cdots$}
	  node[pos=0.77,bnode,inner sep=0.6](l4){}node[pos=0.77,below]{$z_{n-1}$}
	  node[pos=0.89,below]{$\vec{\bm\zeta}_n$}
	  node[pos=1,bnode,inner sep=0.6]{}node[pos=1,right]{$x_0=z_n=z_n'$};
    
    \draw [thick,
	    postaction={decorate}, decoration={markings,
	      mark=between positions 0.1 and 0.95 step 0.204 with {\arrow{stealth};}}
	  ]
	  (0,0) ..controls (1.5,0.9) and (6.5,0.9).. (8,0)
	  node[pos=0,bnode,inner sep=0.6]{}
	  node[pos=0.23,bnode,inner sep=0.6](u1){}node[pos=0.23,above]{$z'_1$}
	  node[pos=0.11,above]{$\vec{\bm\zeta}_1'$}
	  node[pos=0.41,bnode,inner sep=0.6](u2){}node[pos=0.41,above]{$z'_2$}
	  node[pos=0.32,above]{$\vec{\bm\zeta}_2'$}
	  node[pos=0.59,bnode,inner sep=0.6](u3){}node[pos=0.59,above]{$\vphantom{z}\cdots$}
	  node[pos=0.77,bnode,inner sep=0.6](u4){}node[pos=0.77,above]{$z'_{n-1}$}
	  node[pos=0.89,above]{$\vec{\bm\zeta}_n'$}
	  node[pos=1,bnode,inner sep=0.6]{};
    
    \draw [middlearrow=stealth] (l1.center)..controls +(0.1,0.3) and +(0.1,-0.3)..(u1.center)node[pos=0.5,left]{$\vec{\bm\varepsilon}_1$}
	(u1.center)node[pos=0.5,xshift=0.8 cm]{$\vec{\bm\xi}_2$};
    \draw [middlearrow=stealth] (l2.center)..controls +(0.1,0.3) and +(0.1,-0.3)..(u2.center)node[pos=0.5,right]{$\vec{\bm\varepsilon}_2$};
    \draw [middlearrow=stealth] (l4.center)..controls +(0.1,0.3) and +(0.1,-0.3)..(u4.center)node[pos=0.5,left]{$\vec{\bm\varepsilon}_{n-1}$};
    
      \path (l2) +(-0.1,0.1)node(x1){}
	  (l1) +(0.1,0.1)node(x2){}
	  (u1) +(0.1,-0.1)node(x3){}
	  (u2) +(-0.1,-0.1)node(x4){};
      \draw [thick, densely dotted,->,>=stealth,rounded corners] (l2.north west)--
      (x2.center)..controls +(0.05,0.25) and +(0.05,-0.25)..
      (x3.center)--
      (x4.center) ..controls +(0.05,-0.25) and +(0.05,0.25)..
      (x1.center);
   \end{tikzpicture}
   \caption{Geodesifications filling in a discrete homotopy.}
   \label{fig:geodesifications.of.homotopy}
\end{figure}
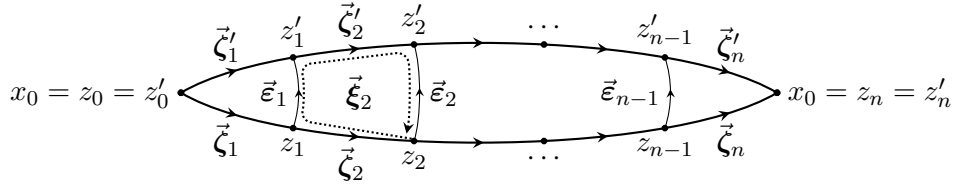

Let now $\vec{\bm\beta}_i$ be a \jp joining $x_0$ to $z_i$; then in $\jpgrp(X,x_0)$ we have
\[
 [z_{geo}]=\Bigbrack{\vec{\bm\zeta}_1\cdots\vec{\bm\zeta}_n}
    =\Bigbrack{\vec{\bm\zeta}_1\vec{\bm\beta}_1^*}\Bigbrack{\vec{\bm\beta}_1\vec{\bm\zeta}_2\vec{\bm\beta}_2^*}\cdots
	\Bigbrack{\vec{\bm\beta}_{n-1}\vec{\bm\zeta}_n}
\]
and
\[
 [z'_{geo}]=\Bigbrack{\vec{\bm\zeta}'_1\cdots\vec{\bm\zeta}'_n}
    =\Bigbrack{\vec{\bm\zeta}_1\vec{\bm\beta}_1^*}\Bigbrack{\vec{\bm\beta}_1\vec{\bm\xi}_1\vec{\bm\beta}_1^*}
      \Bigbrack{\vec{\bm\beta}_1\vec{\bm\zeta}_2\vec{\bm\beta}_2^*}\cdots
      \Bigbrack{\vec{\bm\beta}_{n-1}\vec{\bm\zeta}_n}\Bigbrack{\vec{\bm\xi}_n}.
\]
Note that $\vec{\bm\xi}_i\in\CT_\theta$, and therefore $\vec{\bm\beta}_i\vec{\bm\xi}_i\vec{\bm\beta}_i^*\in{\rm FT}_\theta$.
As ${\rm FT}_\theta$ is invariant under conjugation, it follows that $[z_{geo}]\equiv[z'_{geo}] \pmod{\rm FT_\theta}$.

Let $\vec{\bm\gamma}$ be any closed \jp and $\widehat{\bm\gamma}\colon[n]\to X$ its discretisation; we claim that $\bigbrack{\vec{\bm\gamma}}\equiv\bigbrack{\widehat{\bm\gamma}_{geo}} \pmod{\rm FT_\theta}$\textemdash note that we do not claim that $\vec{\bm\gamma}$ and $ \widehat {\bm\gamma}_{geo}$ be homotopic. By hypothesis we can assume that $\vec{\bm\gamma}$ is composed of rectifiable paths. Let $\beta_{j-1}^j$ be the sub\=/path of $\vec{\bm\gamma}$ going from $\widehat{\bm\gamma}(j-1)$ to $\widehat{\bm\gamma}(j)$\textemdash it could either be a continuous path or a single jump. We will now argue as above to conclude that $\beta_{j-1}^j$ and the geodesic between $\widehat{\bm\gamma}(j-1)$ and $\widehat{\bm\gamma}(j)$ differ by a product of loops in $\CT_\theta$. This is clearly doable if $\beta_{j-1}^j$ is a single jump; while if $\beta_{j-1}^j$ is a continuous path it is can be subdivided in finitely many pieces of length at most $\theta$ (because it is rectifiable) and the argument can be applied when seeing $\beta_{j-1}^j$ as the concatenation of these smaller pieces (see Figure \ref{fig:subdividing.geodesic}). 

\begin{figure}
 \centering
 \begin{tikzpicture}
  \path[clip](-2,-1.8) rectangle (5,3.5);
  \draw[dotted] (0,0) circle (4);
  \draw[thick,dashed](0,0) node[bnode,inner sep=0.7]{}node[left]{$\widehat{\bm \gamma}(j-1)$} 
      --(4,0)node[bnode,inner sep=0.7]{}node[right]{$\widehat{\bm \gamma}(j)$}
      (2.3,2)node{$\beta_{j-1}^{j}$};
  \draw plot[smooth, tension=0.7] coordinates { (0,0)(0.1,2)(0.3,2)(0.5,-1.5)(1.5,2)(2.5,-1.7)(3,1.2)(4,0) }
	  [postaction={decorate}, 
	  decoration={markings, mark=between positions 0.09 and 0.99 step 0.12 with {\draw (0,-2 pt) --(0,2pt);}},
	  ];
  \end{tikzpicture}
 \caption{Subdividing a piece of a continuous paths.}
 \label{fig:subdividing.geodesic}
\end{figure}
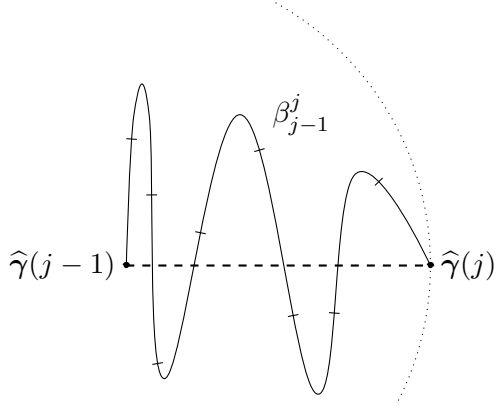

Assume now that $\vec{\bm\gamma}$ and $\vec{\bm\gamma}'$ are \jps with $\theta$\=/homotopic discretisations; then there exists a $\theta$\=/grid homotopy $\widehat H\colon [n]\times [m]\to X$ between lazified versions of $\widehat{\bm\gamma}$ and $\widehat{\bm\gamma}'$. Let $\widehat{\bm\gamma}^{(k)}\coloneqq \widehat H(\;\!\cdot\;\!,k)$, note that $\widehat{\bm\gamma}_{geo}=\widehat{\bm\gamma}^{(0)}_{geo}$ and $\widehat{\bm\gamma}'_{geo}=\widehat{\bm\gamma}^{(m)}_{geo}$ as lazifying does not modify the actual paths. 
By the above discussion, for every $k=1,\ldots,m$ we have $\bigbrack{\widehat{\bm\gamma}^{(k-1)}_{geo}}\equiv\bigbrack{\widehat{\bm\gamma}^{(k)}_{geo}}\pmod{\rm FT_\theta}$ and, as we know that $\bigbrack{\vec{\bm\gamma}}\equiv \bigbrack{\widehat{\bm\gamma}_{geo}}$ and $\bigbrack{\vec{\bm\gamma}'}\equiv\bigbrack{ \widehat{\bm\gamma}'_{geo}} \pmod{\rm FT_\theta}$, this concludes the proof of the theorem.
\end{proof}

\begin{rmk}
This is a sort of continuous version of \cite[Theorem 2.7]{BKLW01}. The techniques we use here are also similar in spirit to what we use in the proof of the main result of \cite{Vig18c}, but we had to include a proof because there are some additional difficulties.

Note also that this is the only place where we really need the geodesicity hypothesis. It is clear from the proof that some version of this theorem can be proved with weaker hypotheses (\emph{e.g.} spaces with path metrics). We decided not to do so to avoid unnecessary complications. 
\end{rmk}

\section{Criteria for explicit computations}\label{sec:criteria.for.computations}

From now on we will assume $(X,d)$ to have homotopic rectifiable paths and $(X,\wSdist)$ to be jumping\=/geodesic. Combining Theorem~\ref{thm:discrete.fund.group_iso_quotient.of.jump.fund.grp} with Theorem~\ref{thm:jump.fund.grp_iso_semidirect.prod} we obtain a (non\=/canonical, as it depends on the choice of paths $\alpha_s$) surjection 
\[
\begin{tikzcd}[row sep=0ex]
\widehat\Phi_S\colon\pi_1(X,x_0)\rtimes_{\phi_S}F_S \arrow[two heads]{r} & \thgrp\bigparen{(X,\wSdist),x_0}.
\end{tikzcd}
\] 
We now wish to study the kernel of $\widehat\Phi_S$ more explicitly. 

To begin with, let $\shpa\coloneqq\bigbrace{[\gamma]\in\pi_1(X,x_0)\bigmid \gamma\frhom\gamma',\ \norm{\gamma'}\leq 4\theta}$ be the set of continuous loops based at $x_0$ that are freely homotopic to continuous loops of length at most $4\theta$ in $(X,d)$. It is then clear that $\shpa\times \{e\}$ is in the kernel of $\widehat \Phi_S$. Moreover, as it is a subset of the normal factor of the semidirect product, taking the quotient by $\aangles{\shpa\times \{e\}}$ preserves the structure of semidirect product. Therefore $\widehat\Phi_S$ factors as
\[
\begin{tikzcd}
\pi_1(X,x_0)\rtimes_{\phi_S}F_S \arrow{r}{\widehat\Phi_S}\arrow{d} & \thgrp\bigparen{(X,\wSdist),x_0} \\
\pi_1(X,x_0)/\aangles{\shpa}\rtimes_{\phi_S}F_S \arrow{ru} &
\end{tikzcd}
\] 
where the normal closure $\aangles{\shpa}$ is taken in the whole semidirect product and therefore we have
\[
 \aangles{\shpa}=\angles*{\vphantom{\big(}\braces{\phi_S(w)[\gamma]\mid w\in F_S,\ [\gamma]\in\shpa}}.
\]
Equivalently, $\aangles{\shpa}$ is the subgroup generated by the set of continuous paths which are freely homotopic to the image under some $w\in F_S$ of a short closed path. 

Let $\shjp\coloneqq\bigbrace{w\in\ F_S\bigmid \exists y\in X,\ d(y,w\cdot y)+\abs{w}\leq 4\theta}$ be the set of elements $w\in F_S$ that move some point $y\in(X,d)$ by less than $4\theta$ minus the length of the reduced word of $w$.

\begin{lem}\label{lem:short.jumps.are.trivial}
 Let $(X,d)$ be a geodesic metric space and let $\theta$ be a natural number. Then for every $w\in \shjp$ there exists a $[\gamma]\in\pi_1(X,x_0)$ so that $([\gamma],w)\in\ker\bigparen{\widehat\Phi_S}$.
\end{lem}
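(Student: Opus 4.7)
The plan is to produce, for each $w\in\shjp$, an explicit closed \jp $\vec{\bm\rho}$ based at $x_0$ whose class lies in $\mathrm{FT}_\theta$, and then to read off the pair $([\gamma],w)\in\pi_1(X,x_0)\rtimes F_S$ that maps to $[\vec{\bm\rho}]$ via the isomorphism $\Phi_S$ of Theorem~\ref{thm:jump.fund.grp_iso_semidirect.prod}. Since $\widehat\Phi_S=\discrmap\circ\Phi_S$ and $\mathrm{FT}_\theta\subseteq\ker\discrmap$ by Theorem~\ref{thm:discrete.fund.group_iso_quotient.of.jump.fund.grp}, this immediately yields $([\gamma],w)\in\ker\widehat\Phi_S$ as required.

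Concretely, pick $y\in X$ realising $d(y,w\cdot y)+\abs{w}\leq 4\theta$, write $w=\vec s_1\cdots\vec s_n$ as a reduced word, and let $\eta$ be a geodesic from $y$ to $w\cdot y$ in $(X,d)$. Set
\[
\vec{\bm\delta}\coloneqq\eta\jto[\vec s_1^{-1}]\jto[\vec s_2^{-1}]\cdots\jto[\vec s_n^{-1}]y,
\]
a closed \jp at $y$ of total length $c+n\leq 4\theta$, where $c\coloneqq d(y,w\cdot y)$. The key combinatorial step is to exhibit $\vec{\bm\delta}$ as a concatenation $\vec{\bm\delta}_0\vec{\bm\delta}_1\vec{\bm\delta}_2\vec{\bm\delta}_3$ with $\norm{\vec{\bm\delta}_i}\leq\theta$, so that $\vec{\bm\delta}\in\CT_\theta$. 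This is achieved by freely subdividing $\eta$ into sub\=/geodesics of prescribed lengths (each jump contributing a fixed length $1\leq\theta$) and selecting an index $k^{*}\in\{1,2,3,4\}$ so that the first $k^{*}-1$ pieces consist of sub\=/geodesics of length $\theta$, the $k^{*}$-th piece straddles the continuous/jump interface, and the remaining $4-k^{*}$ pieces each absorb at most $\theta$ jumps; a short case analysis shows that the constraints $c\leq k^{*}\theta$ and $n\leq (5-k^{*})\theta$ are ensured by $c+n\leq 4\theta$ (taking $k^{*}=\max\{1,\lceil c/\theta\rceil\}$ works).

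Finally, choose any continuous path $\beta$ from $x_0$ to $y$ (possible by path\=/connectedness of $X$) and set $\vec{\bm\rho}\coloneqq\beta\,\vec{\bm\delta}\,\beta^{*}$, a closed \jp based at $x_0$. Continuously sliding the basepoint along $\beta$ provides a free homotopy between $\vec{\bm\rho}$ and $\vec{\bm\delta}$, whence $[\vec{\bm\rho}]\in\mathrm{FT}_\theta$. Iterating Lemma~\ref{lem:paths.and.jumps.commute} in the form $\jto[\vec s]\alpha\sim\vec s^{\,-1}(\alpha)\jto[\vec s]$ to push $\beta^{*}$ leftwards through all $n$ jumps shows that $\vec{\bm\rho}$ is homotopic to $\beta\cdot\eta\cdot w(\beta^{*})\jto[\vec s_1^{-1}]\cdots\jto[\vec s_n^{-1}]x_0$. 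Comparing with the formula $\Phi_S([\gamma],w)=\bigbrack{\gamma\alpha_w\jto[w_{\rm rev}^{-1}]x_0}$ of Theorem~\ref{thm:jump.fund.grp_iso_semidirect.prod}, and noting that $w_{\rm rev}^{-1}=\vec s_1^{-1}\cdots\vec s_n^{-1}$, we identify $[\gamma]\coloneqq\bigbrack{\beta\cdot\eta\cdot w(\beta^{*})\cdot\alpha_w^{*}}\in\pi_1(X,x_0)$ (manifestly a loop at $x_0$) and conclude $([\gamma],w)\in\Phi_S^{-1}(\ker\discrmap)=\ker\widehat\Phi_S$. The main obstacle is the bookkeeping behind the four\=/piece splitting of $\vec{\bm\delta}$; everything else is a direct unwinding of the formulas set up in Sections~\ref{sec:jumping.fund.grp} and~\ref{sec:discretisations}.
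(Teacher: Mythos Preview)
Your proof is correct and follows essentially the same approach as the paper's: you build the same short closed \jp $\eta\jto[w_{\rm rev}^{-1}]y$ (the paper calls $\eta$ by $\varepsilon$), conjugate it by a path $\beta$ to $x_0$, and end up with the identical loop $\gamma=\beta\,\eta\,w(\beta^{*})\,\alpha_w^{*}$. The only difference is cosmetic: the paper reaches the same conclusion by factoring $\psi_S(w)=[\alpha_w\jto[w_{\rm rev}^{-1}]x_0]$ directly, and it asserts the four\=/piece decomposition in one line (``as $\theta$ is an integer''), whereas you spell out the choice $k^{*}=\max\{1,\lceil c/\theta\rceil\}$ explicitly.
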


\begin{proof}
 Let $y$ be a point so that $d(y,w\cdot y)+\abs{w}\leq 4\theta$, $\beta$ be a continuous path from $x_0$ to $y$, and let $\varepsilon$ be a continuous geodesic path from $y$ to $w\cdot y$. Note that, as $\theta$ is an integer, the closed \jp $\varepsilon\jto[w^{-1}_{\rm rev}]y$ can be decomposed into four sub\=/paths of length at most $\theta$, and it is therefore in $\CT_\theta$.
 
 By Lemma~\ref{lem:paths.and.jumps.commute} we have
 \[
  \bigbrack{\alpha_w\jto[w^{-1}_{\rm rev}]x_0}
  =\bigbrack{\alpha_ww(\beta)\jto[w^{-1}_{\rm rev}]\beta^*}
  =\bigbrack{\alpha_w w(\beta)\varepsilon^*\beta^*}\bigbrack{\beta\varepsilon\jto[w^{-1}_{\rm rev}] \beta^*},
 \]
 and since $\bigbrack{\beta\varepsilon\jto[w^{-1}_{\rm rev}] \beta^*}$ is in ${\rm FT}_\theta$ we can conclude the proof by letting $\gamma\coloneqq\beta\varepsilon w(\beta^*)\alpha_w^*$ (see Figure~\ref{fig:short.jumps.are.trivial}).
\end{proof}

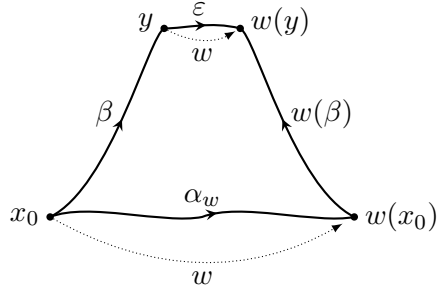
\begin{figure}
 \centering
 \begin{tikzpicture}
  \draw[thick, middlearrow=stealth](0,0)node[bnode,inner sep=0.7]{}node[left]{$x_0$} 
	  ..controls(0.5,0.2) and (1.5,-0.1).. 
	  (2,0)node[above]{$\alpha_w$} ..controls(2.5,0.2) and (3.5,-0.1).. 
	  (4,0)node[bnode,inner sep=0.7]{}node[right]{$w(x_0)$};
  \draw[thick, middlearrow=stealth](1.5,2.5) node[bnode,inner sep=0.7]{}node[yshift=0.5 ex, left]{$y$}
	  ..controls(1.8,2.5) and (2.1,2.6).. 
	  (2.5,2.5)node[pos=0.5,above]{$\varepsilon$}node[bnode,inner sep=0.7]{}node[yshift=0.5 ex,right]{$w(y)$};
  \draw[thick, middlearrow=stealth](0,0) ..controls(0.8,0.4) and (1.3,2.4).. 
	  (1.5,2.5)node[pos=0.5,left]{$\beta$};
  \draw[thick, middlearrow=stealth] (4,0)..controls  (3.2,0.4) and (2.7,2.4) .. 
	  (2.5,2.5)node[pos=0.5,right]{$w(\beta)$};

  \draw [densely dotted, bend right,->,shorten >= 5 pt](0,0) to (4,0) ;
  \draw [densely dotted, bend right,->,shorten >= 2 pt](1.5,2.5) to (2.5,2.5);
  \draw (2,-0.8)node{$w$} (2,2.15)node{$w$};
  \end{tikzpicture}
 \caption{Constructing a continuous path that trivializes a short jump $w$.}
 \label{fig:short.jumps.are.trivial}
\end{figure}

\begin{cor}\label{cor:S.in.shortjumps_thgrp.quotient.of.pi}
 If $(X,d)$ is geodesic, $\theta\in\NN$ and $s\in\shjp$ for every $s\in S$, then $\thgrp\bigparen{(X,\wSdist),x_0}$ is isomorphic to a quotient of $\pi_1(X,x_0)$.
\end{cor}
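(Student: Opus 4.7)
The plan is to read the corollary as a direct consequence of Lemma~5.1 applied to each generator. Concretely, combining Theorem~\ref{thm:discrete.fund.group_iso_quotient.of.jump.fund.grp} with Theorem~\ref{thm:jump.fund.grp_iso_semidirect.prod} one obtains a surjection
\[
 \widehat\Phi_S\colon \pi_1(X,x_0)\rtimes_{\phi_S} F_S \twoheadrightarrow \thgrp\bigparen{(X,\wSdist),x_0},
\]
and under the hypothesis $S\subseteq\shjp$ Lemma~\ref{lem:short.jumps.are.trivial} yields, for each $s\in S$, an element $[\gamma_s]\in\pi_1(X,x_0)$ with $\bigparen{[\gamma_s],s}\in\ker(\widehat\Phi_S)$. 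The aim is to show that this is enough to kill the $F_S$-factor, so that $\widehat\Phi_S$ restricts to a surjection from $\pi_1(X,x_0)\times\{e\}$.

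First I would rephrase the conclusion of Lemma~\ref{lem:short.jumps.are.trivial} as the relation $\widehat\Phi_S(e,s)=\widehat\Phi_S\bigparen{[\gamma_s]^{-1},e}$ inside $\thgrp\bigparen{(X,\wSdist),x_0}$; this is immediate from $\widehat\Phi_S([\gamma_s],e)\cdot\widehat\Phi_S(e,s)=\widehat\Phi_S([\gamma_s],s)=e$. Since $S$ generates $F_S$, by induction on reduced word length every element $\widehat\Phi_S(e,w)$ lies in the image $H\coloneqq \widehat\Phi_S\bigparen{\pi_1(X,x_0)\times\{e\}}$: the inductive step uses that $H$ is a subgroup and that $\widehat\Phi_S(e,w_1 w_2)=\widehat\Phi_S(e,w_1)\widehat\Phi_S(e,w_2)$.

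Combining this with the factorisation $\widehat\Phi_S([\gamma],w)=\widehat\Phi_S([\gamma],e)\widehat\Phi_S(e,w)$ and the surjectivity of $\widehat\Phi_S$ then forces $\thgrp\bigparen{(X,\wSdist),x_0}=H$. As $H$ is by construction a homomorphic image of $\pi_1(X,x_0)$, this exhibits $\thgrp\bigparen{(X,\wSdist),x_0}$ as a quotient of $\pi_1(X,x_0)$, finishing the proof.

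There is essentially no genuine obstacle here: all the real work has already been performed in Lemma~\ref{lem:short.jumps.are.trivial} (building the auxiliary loop $\gamma$ out of the short \jp and invoking that $\CT_\theta$-loops die in $\thgrp$). The only thing to keep an eye on is to avoid ambiguity between the semidirect product structure on $\pi_1(X,x_0)\rtimes_{\phi_S}F_S$ and the plain subgroup $H$ in the quotient\textemdash which is why I would phrase the argument in terms of the images $\widehat\Phi_S(e,s)$ rather than try to manipulate the kernel directly.
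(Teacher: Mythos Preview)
Your argument is correct and is exactly the intended one: the paper states the corollary without proof, treating it as immediate from Lemma~\ref{lem:short.jumps.are.trivial}, and you have simply spelled out the straightforward deduction that once each $\widehat\Phi_S(e,s)$ lies in the image of $\pi_1(X,x_0)\times\{e\}$, the whole of $\thgrp$ does too.
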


The converse of Lemma \ref{lem:short.jumps.are.trivial} is not true in general. Still, it does hold if the action is by isometries.

\begin{lem}
 If the action $F_S\curvearrowright (X,d)$ is by isometries, then there exists a $\gamma$ so that $([\gamma],w)\in\ker\bigparen{\widehat\Phi_S}$ only if $w\in\aangles{\shjp}$.
\end{lem}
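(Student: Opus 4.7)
The strategy is to project everything down to the $F_S$\=/factor of the semidirect product and then, for each generator of the kernel $\ker(\widehat\Phi_S)$, exploit the bounded\=/length decomposition of elements of $\CT_\theta$ together with the isometric assumption to produce an element of $\shjp$.

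By Theorem~\ref{thm:discrete.fund.group_iso_quotient.of.jump.fund.grp}, $\ker(\widehat\Phi_S)$ is (identified via $\Phi_S$ with) the normal subgroup of $\pi_1(X,x_0)\rtimes_{\phi_S}F_S$ generated by the classes $\Phi_S^{-1}(\rm FT_\theta)$. Since the projection $\pi_1(X,x_0)\rtimes_{\phi_S}F_S\to F_S$ is a surjective homomorphism, the image of $\ker(\widehat\Phi_S)$ in $F_S$ is a normal subgroup generated by the $F_S$\=/components of those classes. Free homotopy translates into $F_S$\=/conjugation (this is visible in the very beginning of the proof of Theorem~\ref{thm:discrete.fund.group_iso_quotient.of.jump.fund.grp}: passing to $\vec{\bm\beta}\vec{\bm\gamma}'\vec{\bm\beta}^*$ conjugates by the $F_S$\=/element of $\vec{\bm\beta}$), and $\aangles{\shjp}$ is already normal in $F_S$, so it will be enough to verify that for every closed $\vec{\bm\gamma}\in\CT_\theta$ based at some $y\in X$, the $F_S$\=/component $w$ associated to $\vec{\bm\gamma}$ via $\Phi_S^{-1}$ lies in $\shjp$.

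Write $\vec{\bm\gamma}=\vec{\bm\gamma}_0\vec{\bm\gamma}_1\vec{\bm\gamma}_2\vec{\bm\gamma}_3$ with $\norm{\vec{\bm\gamma}_i}\leq\theta$, denote by $y_i$ the starting point of $\vec{\bm\gamma}_i$ (so $y_0=y_4=y$), by $m_i$ the number of jumps inside $\vec{\bm\gamma}_i$, and by $g_i\in F_S$ the net group element given by composing those jumps in order of application. By iteratively applying Lemma~\ref{lem:paths.and.jumps.commute} to each $\vec{\bm\gamma}_i$, we push all the jumps to the front, obtaining a homotopic \jp of the form $y_i\jto\cdots\jto\tilde\gamma_i$ whose final continuous piece $\tilde\gamma_i$ joins $g_i\cdot y_i$ to $y_{i+1}$. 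This is the step where the isometric assumption is crucial: moving a continuous path past a jump amounts to applying a group element to it, which under an isometric action preserves its length. Consequently $\norm{\tilde\gamma_i}=\norm{\vec{\bm\gamma}_i}-m_i\leq\theta-m_i$, and in particular $d(g_i\cdot y_i,\,y_{i+1})\leq\theta-m_i$.

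Set $g\coloneqq g_3g_2g_1g_0\in F_S$. Applying the triangle inequality and then using that each $g_j$ acts on $(X,d)$ by isometries we get
\[
d(g\cdot y_0,\,y_0)\leq\sum_{i=0}^3 d\bigparen{g_3\cdots g_{i+1}g_i\cdot y_i,\;g_3\cdots g_{i+1}\cdot y_{i+1}}=\sum_{i=0}^3 d(g_i\cdot y_i,\,y_{i+1})\leq 4\theta-\sum_{i=0}^3 m_i.
\]
Since $\abs{g}\leq\sum_i\abs{g_i}\leq\sum_i m_i$, we conclude $d(g\cdot y_0,y_0)+\abs{g}\leq 4\theta$, so $g\in\shjp$. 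An unpacking of the formula $\Phi_S([\gamma],w)=\bigbrack{\gamma\alpha_w\jto[w_{\rm rev}^{-1}]x_0}$ from Theorem~\ref{thm:jump.fund.grp_iso_semidirect.prod} shows that the $F_S$\=/component $w$ of $\Phi_S^{-1}(\vec{\bm\gamma})$ satisfies $w^{-1}=g$; as $\shjp$ is closed under inversion (since $d(y,w^{-1}\cdot y)=d(w\cdot y,y)$ and $\abs{w^{-1}}=\abs{w}$), we get $w\in\shjp$, completing the argument. The only place where the isometric hypothesis is genuinely used is the length identity $\norm{\tilde\gamma_i}=\norm{\vec{\bm\gamma}_i}-m_i$: without it, the correction paths $\tilde\gamma_i$ could be arbitrarily long and the crucial displacement estimate would fail, which is exactly what permits the converse of Lemma~\ref{lem:short.jumps.are.trivial} to break for non\=/isometric actions.
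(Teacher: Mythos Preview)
Your argument is correct and uses the same key idea as the paper: push the jumps of a short closed \jp to one side via Lemma~\ref{lem:paths.and.jumps.commute}, use that an isometric action preserves the length of the transported continuous pieces, and read off the inequality $d(y,w\cdot y)+\abs{w}\leq 4\theta$. The only difference is that you carry out this manoeuvre separately on each of the four $\theta$\=/pieces and then recombine with the triangle inequality, whereas the paper simply observes that $\norm{\vec{\bm\gamma}}\leq 4\theta$ and applies Corollary~\ref{cor:paths.jump.first} once to the whole \jp $\vec{\bm\gamma}$, obtaining directly a homotopic $\gamma'\jto[w^{-1}_{\rm rev}]y$ of the same length; your four\=/fold split is therefore unnecessary but harmless.
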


\begin{proof}
 Let $\vec{\bm \gamma}=\gamma_1\jto[\vec s_1]\cdots\jto[\vec s_n]\gamma_n$ be a \jp and let $w=\vec s_n^{\,-1}\cdots \vec s_1^{\,-1}$. As the action is by isometries, the \jp $\gamma'\jto[w^{-1}_{\rm rev}]x_0$ homotopic to $\vec{\bm\gamma}$ as in Corollary \ref{cor:paths.jump.first} has the same length of $\vec{\bm\gamma}$. It follows that if $\vec{\bm \gamma}$ is (freely homotopic to) a path in $\CT_\theta$ then $w$ is (conjugate to) an element in $\shjp$.
\end{proof}

\begin{cor}
 If $F_S$ acts by isometries and $\shjp=\{e\}$, then 
 \[
\thgrp\bigparen{(X,\wSdist),x_0}\cong \pi_1(X,x_0)/\aangles{\shpa}\rtimes_{\phi_S}F_S.
 \]
\end{cor}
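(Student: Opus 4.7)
The strategy is to upgrade the surjection
$\bar\Phi_S\colon\pi_1(X,x_0)/\aangles{\shpa}\rtimes_{\phi_S}F_S \twoheadrightarrow \thgrp\bigparen{(X,\wSdist),x_0}$
already constructed in this section into an isomorphism. Since the inclusion $\aangles{\shpa}\times\{e\}\subseteq\ker\widehat\Phi_S$ is built into the factorization producing $\bar\Phi_S$, the task reduces to the reverse containment. By the preceding lemma, $\ker\widehat\Phi_S\subseteq\pi_1(X,x_0)\times\aangles{\shjp}$, which collapses to $\pi_1(X,x_0)\times\{e\}$ under the hypothesis $\shjp=\{e\}$. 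So the only thing left to verify is: if $([\gamma],e)\in\ker\widehat\Phi_S$, then $[\gamma]\in\aangles{\shpa}$.

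By Theorem~\ref{thm:discrete.fund.group_iso_quotient.of.jump.fund.grp}, $\ker\widehat\Phi_S$ is generated (as a subgroup of $\pi_1(X,x_0)\rtimes_{\phi_S}F_S$) by the elements $\Phi_S^{-1}([\vec{\bm\beta}])$ with $[\vec{\bm\beta}]\in{\rm FT}_\theta$, so it is enough to check that each such generator lies in $\aangles{\shpa}\times\{e\}$. Given $\vec{\bm\beta}\frhom\vec{\bm\gamma}'\in\CT_\theta$ based at some $y\in X$, I would reuse the mechanism of the preceding proof: Corollary~\ref{cor:paths.jump.first} puts $\vec{\bm\gamma}'$ in canonical form with all jumps at the end, and the isometric hypothesis guarantees that each application of Lemma~\ref{lem:paths.and.jumps.commute} preserves total length, so this canonical form still has length at most $4\theta$. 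The trailing jumping pattern is a word whose class in $F_S$ is the $F_S$-component of $[\vec{\bm\gamma}']$; by the preceding lemma this component lies in $\shjp=\{e\}$ and is therefore trivial in $F_S$. Since every word representing the identity in a free group reduces to the empty word via cancellations of adjacent inverse letters, and in the canonical form all jumps are consecutive, each such cancellation is realisable as a contraction. The outcome is a continuous loop $\eta$ at $y$ of length at most $4\theta$ homotopic in $\jpgrp(X,y)$ to $\vec{\bm\gamma}'$.

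Consequently $\vec{\bm\beta}$ is freely homotopic to the short loop $\eta$, and translating this across $\Phi_S^{-1}$ expresses the $\pi_1$-component of $\Phi_S^{-1}([\vec{\bm\beta}])$ as a $\pi_1$-conjugate of $\phi_S(v)[\eta']$ for some $v\in F_S$, where $[\eta']\in\pi_1(X,x_0)$ is the transport of $[\eta]$ to $x_0$ along a fixed jumping path from $x_0$ to $y$. Because the action is by isometries, this transport sends short loops to short loops, so $[\eta']\in\shpa$; and since $\aangles{\shpa}$ is normal in $\pi_1(X,x_0)$ and invariant under the $\phi_S$-action, the whole expression remains in $\aangles{\shpa}$. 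This gives $\ker\widehat\Phi_S\subseteq\aangles{\shpa}\times\{e\}$ as desired, so $\bar\Phi_S$ is an isomorphism. I expect the main technical hurdle to be precisely this last step: carefully tracking how a free homotopy of closed jumping paths corresponds, under $\Phi_S$, to conjugation and the $\phi_S$-action in the semidirect product, and verifying that the isometric condition really does keep transported short loops short throughout all the base-point bookkeeping.
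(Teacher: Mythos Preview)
Your argument is correct and follows the same approach as the paper, which disposes of this corollary in two terse sentences (``Every \jp in $\CT_\theta$ must be continuous as it cannot have any jump. It follows that the kernel of $\widehat\Phi_S$ is equal to $\aangles{\shpa}$.''). In fact your version is more careful: the paper's first sentence is literally false (a path like $x\jto[s]\jto[s^{-1}]x$ lies in $\CT_\theta$), and what is really meant is exactly what you prove\textemdash that every element of $\CT_\theta$ is \emph{homotopic} to a short continuous loop.

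One simplification for your final step: you do not need the isometric hypothesis a second time, and you should transport along a \emph{continuous} path rather than a jumping-path. Once you have the short continuous loop $\eta$ based at $y$, pick any continuous path $\xi$ from $x_0$ to $y$; then $[\xi\eta\xi^*]\in\shpa$ directly from the definition of $\shpa$ (it is freely homotopic to $\eta$, which has length $\le 4\theta$). Writing $[\vec{\bm\beta}]=[\vec{\bm\zeta}\eta\vec{\bm\zeta}^*]=[\vec{\bm\zeta}\xi^*]\,[\xi\eta\xi^*]\,[\vec{\bm\zeta}\xi^*]^{-1}$ exhibits $\Phi_S^{-1}[\vec{\bm\beta}]$ as a conjugate in $\pi_1(X,x_0)\rtimes_{\phi_S}F_S$ of $([\xi\eta\xi^*],e)\in\shpa\times\{e\}$, hence it lies in $\aangles{\shpa}\times\{e\}$ by normality. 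This removes the ``main technical hurdle'' you flagged.
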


\begin{proof}
 Every \jp in $\CT_\theta$ must be continuous as it cannot have any jump. It follows that the kernel of $\widehat\Phi_S$ is equal to $\aangles{\shpa}$.
\end{proof}

So far we have focused our attention to warped metrics coming from actions of (finitely generated) free groups. This is the most general setting as any action $\rho$ of a (finitely generated) group $\Gamma$ can be seen as a quotient of a non\=/faithful action of a (finitely generated) free group by choosing a generating set $S$ for $\Gamma$ and considering the action induced by $\rho$ on $F_S$. Still, the description of $\thgrp$ that we obtain this way is not completely satisfactory. Indeed, one expects the discrete fundamental group to encode some information on the large scale geometry of the metric space, and the coarse geometry of warped cones depends on $\Gamma$ and not on the choice of generating set $S$. The next result tries to uncover the dependence of $\thgrp$ on $\Gamma$, but in order to do so we first need to prove the following lemma:

\begin{lem}\label{lem:quotient.of.semidirect.prod}
 Let $G\rtimes_\phi H$ be a semidirect product and $N\lhd H$ a normal subgroup. Assume that there exists a function $f\colon N\to G$ such that
 \[
  \phi(n)(g)=g^{f(n)}
 \]
for every $n\in N$ and $g\in G$\textemdash where $g^{f(n)}\coloneqq f(n)nf(n)^{-1}$ denotes the conjugation by $f(n)$\textemdash and $f$ is $\phi$\=/equivariant (in the sense that $\phi(h)(f(n))=f(n^h)$ for every $h\in H$). Define $Q$ to be the quotient
\[
 Q\coloneqq \frac{\bigparen{G\rtimes_\phi H}}{\aangles{\bigparen{f(n)^{-1},n}\mid n\in N} };
\]
then there is a natural short exact sequence
 \[
  1\to \ \aangles{f(N)}_Q \to\ Q \ \xrightarrow{\bar p}\  \frac{G}{\aangles{f(N)\cup[f(N),G]} }\rtimes_{\bar\phi} \frac{H}{N}\ \to 1  
 \]
 where $[f(N),G]$ denotes the set $\bigbrace{g^{f(n)}g^{-1}\mid n\in N,\ g\in G}$ and $\aangles{f(N)}_Q$ is the normal subgroup generated by the image of $f(N)$ in $Q$.
\end{lem}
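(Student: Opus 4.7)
The strategy is to construct the surjection $\bar p$ directly from the obvious quotient map and then identify its kernel in $Q$.

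First I would check that the target $(G/\aangles{f(N)\cup[f(N),G]})\rtimes_{\bar\phi}(H/N)$ is a well-defined semidirect product. Write $K\coloneqq \aangles{f(N)\cup[f(N),G]}$ for the normal closure inside $G$. Two things to verify: (a) $\phi(h)$ preserves $K$ for every $h\in H$, and (b) $\phi(n)$ induces the identity on $G/K$ for every $n\in N$. For (a) note that $\phi(h)(f(n))=f(n^h)\in f(N)$ by the $\phi$\=/equivariance of $f$, and since $\phi(h)$ is a group automorphism of $G$, it sends $[f(n),g]$ to $[f(n^h),\phi(h)(g)]\in [f(N),G]$, so the generators (and hence $K$) are preserved. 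For (b) the identity $\phi(n)(g)=f(n)gf(n)^{-1}$ gives $\phi(n)(g)g^{-1}=[f(n),g]\in K$, so $\phi$ descends to a homomorphism $\bar\phi\colon H/N\to\aut(G/K)$.

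Next I would define the projection $p\colon G\rtimes_\phi H\to(G/K)\rtimes_{\bar\phi}(H/N)$ by $(g,h)\mapsto([g]_K,[h]_N)$; this is a homomorphism by construction. The relators generating $Q$ all have the form $(f(n)^{-1},n)$ with $n\in N$, and since $f(n)\in f(N)\subseteq K$ and $n\in N$ each is killed, so $p$ factors through a surjection $\bar p\colon Q\twoheadrightarrow(G/K)\rtimes_{\bar\phi}(H/N)$. This furnishes the right half of the asserted exact sequence.

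The main work is identifying $\ker\bar p$ with $\aangles{f(N)}_Q$. The inclusion $\aangles{f(N)}_Q\subseteq\ker\bar p$ is immediate because each generator $(f(n),e)$ has both coordinates trivial modulo $K$ and $N$. For the reverse inclusion, the key trick is to exploit the defining relation of $Q$: inside $Q$ one has $(e,n)=(f(n),e)$ for every $n\in N$. Hence any class $[(g,h)]\in\ker\bar p$ (so $g\in K$, $h\in N$) can be rewritten as $[(g\,f(h),e)]$ with $g\,f(h)\in K$, reducing the problem to showing that every element of the form $[(k,e)]$ with $k\in K$ lies in $\aangles{f(N)}_Q$. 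Since $K$ is by definition the normal closure in $G$ of $f(N)\cup[f(N),G]$, write $k$ as a product of $G$\=/conjugates of elements of $f(N)\cup[f(N),G]$; in $Q$ the corresponding conjugates $(g_i,e)(f(n_i)^{\pm 1},e)(g_i^{-1},e)$ manifestly lie in $\aangles{f(N)}_Q$, and each commutator $[f(n),g]$ expands into a product of conjugates of $(f(n)^{\pm 1},e)$, hence also lies there. Assembling these ingredients gives the exact sequence.

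The only subtle step is the bookkeeping in the last paragraph: one must be a little careful that one is indeed computing the normal closure in $Q$ (and not, say, in $G\rtimes H$), and that the identification $(e,n)=(f(n),e)$ in $Q$ is used correctly when moving the $H$\=/component of an element of $\ker\bar p$ into the $G$\=/component. No genuine obstacle is expected here; the argument is purely formal once the relation $(e,n)=(f(n),e)$ in $Q$ is observed.
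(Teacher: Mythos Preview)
Your proposal is correct and follows essentially the same route as the paper: verify the target semidirect product is well\=/defined via the $\phi$\=/equivariance of $f$, define $p$ coordinatewise, factor through $Q$, and then identify $\ker\bar p$ by using the relation $(e,n)=(f(n),e)$ in $Q$ to reduce to elements $[(k,e)]$ with $k\in K$ and expressing each generator of $K$ as a product of conjugates of $(f(n)^{\pm1},e)$. The only cosmetic difference is that the paper writes the commutator identity as $\bigparen{g^{f(n)}g^{-1}}^{a}=\bigparen{f(n)(f(n)^{-1})^{g}}^{a}$ where you phrase it as ``$[f(n),g]$ expands into a product of conjugates of $(f(n)^{\pm1},e)$''; these are the same observation.
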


\begin{proof}
 We first need to show that the semidirect product on the right hand side is well\=/defined. For every $g\in G$, $n\in N$ and $h\in H$ we have 
 \[
  \phi(h)\bigparen{f(n)}=f\bigparen{n^h}
 \]
and
 \[
  \phi(h)\bigparen{g^{f(n)}g^{-1}}=\bigparen{\phi(h)(g)}^{\phi(h)(f(n))}\phi(h)(g)^{-1}
    =\bigparen{\phi(h)(g)}^{f(n^h)}\phi(h)(g)^{-1}.
 \]
Since $N$ is a normal subgroup of $H$, we deduce that $\aangles{f(N)\cup[f(N),G]}$ is preserved by $\phi$ and we therefore obtain an $H$\=/action on the quotient $G/\aangles{f(N)\cup[f(N),G]}$.
Moreover, $N$ acts trivially on $G/\aangles{f(N)\cup[f(N),G]}$ as $\bigparen{\phi(n)(g)}g^{-1}$ is in $[f(N),G]$ by definition. It follows that $\phi$ naturally induces the required homomorphism
\[
 \bar\phi\colon H/N\to \aut\Bigparen{\frac{G}{\aangles{f(N)\cup[f(N),G]}} }
\]
and we obtain a natural surjection
\[
 p\colon\bigparen{G\rtimes_\phi H}\ \longrightarrow \ \frac{G}{\aangles{f(N)\cup[f(N),G]} }\rtimes_{\bar\phi} \frac{H}{N}
\]
sending $(g,h)$ to $(\bar g,\bar h)$. It only remains to study the kernel of such a surjection. 

The elements $\bigparen{f(n)^{-1},n}$ trivially belong to the kernel of $p$ for every $n\in N$ and therefore $p$ factors through the quotient $Q$. We obtain this way a surjective homomorphism $\bar p$ of $Q$ onto the semidirect product. It only remains to show that the kernel of $\bar p$ is exactly $\aangles{f(N)}_Q$. Note that one containment is obvious, as $\bigparen{f(n),e}$ is in $\ker(p)$ for every $n\in N$.

Vice versa, if $(g,h)$ belongs to $\ker(p)$ then $h=n$ for some $n\in N$, so that $(g,h)\equiv \bigparen{g f(n),e} \mod \aangles{\bigparen{f(n)^{-1},n}\mid n\in N}$. We can hence restrict to the study of elements of $\ker(p)$ of the form $(g,e)$. Given such an element, $g$ can be expressed as a product of conjugates of $f(n)^{\pm 1}$ or $\bigparen{g^{f(n)}g^{-1}}^{\pm 1}$ with $n\in N$ and $g\in G$. It is hence enough to observe that (the equivalence class of) a conjugate $f(n)^{a}$ is in $\aangles{f(N)}_Q$ by definition and that
\[
 \bigparen{g^{f(n)}g^{-1}}^{a}=\Bigparen{ f(n) \bigparen{f(n)^{-1}}^{g} }^{a}
\]
is in $\aangles{f(N)}_Q$ as well.
\end{proof}

Recall that a \emph{presentation} for a group $\Gamma$ is a choice of generating set $S$ and a set $R$ of words in $S\cup S^{-1}$ (called relations) such that $\aangles{R}$ is the kernel of the natural surjection $F_S\to\Gamma$. Such a presentation is denoted as $\Gamma=\angles{S\mid R}$, and a group said to be \emph{finitely presented} if it admits a presentation where $S$ and $R$ are finite. In what  follows, we let $\Gamma_\theta$ be the finitely presented group $\angles{S\mid R_\theta}$ where $R_\theta$ is the subset of $\aangles{R}$ of words of length at most $4\theta$.

Note that if the group $\Gamma$ is acting by homeomorphisms on $(X,d)$ and $r\in \aangles{R}$ is a relation of $\Gamma$, then the path $\alpha_r$ is closed and it hence defines an element in $\pi_1(X,x_0)$. We can now prove the following.

\begin{prop}\label{prop:discr.fund.group.is.semidirect.Gamma}
 Let $\Gamma=\angles{S\mid R}$ be a finitely generated group acting on $(X,x_0)$ by homeomorphisms, fix $\theta\in \NN$ and let $G_\theta$ be the quotient
 \[
  G_\theta\coloneqq\frac{\pi_1(X,x_0)}{\aangles{\bigbrace{[\alpha_r]\bigmid r\in \aangles{R_\theta}}\cup \bigbrace{[\alpha_r\gamma\alpha_r^*\gamma^{-1}]\bigmid [\gamma]\in \pi_1(X,x_0),\ r\in \aangles{R_\theta}} } }.
 \]
 Then the quotient $\thgrp\big((X,\delta_S),x_0\big)/\aangles{\widehat\Phi_S([\alpha_r^*],e)\mid r\in R_\theta}$ is isomorphic to a quotient of $G_\theta\rtimes \Gamma_\theta$.
\end{prop}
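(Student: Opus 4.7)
The plan is to invoke Lemma~\ref{lem:quotient.of.semidirect.prod} with $G=\pi_1(X,x_0)$, $H=F_S$, $N=\aangles{R_\theta}_{F_S}$ and $f\colon N\to G$ defined by $f(n)\coloneqq[\alpha_n]$. I first verify the hypotheses. Since $R_\theta\subseteq\aangles{R}$, every $n\in N$ lies in $\ker(F_S\to\Gamma)$ and therefore acts trivially on $X$; in particular $\alpha_n$ is a loop based at $x_0$, and $f$ is well-defined because the paths attached to equivalent words of $F_S$ are homotopic. Triviality of the action makes $\phi_S(n)[\gamma]=[\alpha_n\,n(\gamma)\,\alpha_n^*]$ collapse to conjugation by $f(n)$, and the identity $\alpha_{hnh^{-1}}=\alpha_h\,h(\alpha_n)\,\alpha_h^*$ (which follows from $\alpha_{w_1w_2}=\alpha_{w_1}w_1(\alpha_{w_2})$ and the equality $hnh^{-1}(\alpha_h^*)=\alpha_h^*$) gives the required $\phi_S$-equivariance $\phi_S(h)(f(n))=f(n^h)$. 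With these choices $G/\aangles{f(N)\cup[f(N),G]}=G_\theta$ and $H/N=\Gamma_\theta$, so the lemma outputs a surjection $Q\twoheadrightarrow G_\theta\rtimes\Gamma_\theta$ whose kernel is $\aangles{f(N)}_Q$.

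Next, combining Theorems~\ref{thm:jump.fund.grp_iso_semidirect.prod} and~\ref{thm:discrete.fund.group_iso_quotient.of.jump.fund.grp} rewrites the group in the statement as $(\pi_1(X,x_0)\rtimes_{\phi_S}F_S)/K$, where $K$ is the normal closure in $\pi_1\rtimes F_S$ of $\ker(\widehat\Phi_S)\cup\{([\alpha_r^*],e)\mid r\in R_\theta\}$. To obtain the desired surjection $G_\theta\rtimes\Gamma_\theta\twoheadrightarrow(\pi_1\rtimes F_S)/K$ it is enough to show that $K$ contains, for every $n\in N$, both $([\alpha_n^*],n)=(f(n)^{-1},n)$ and $([\alpha_n],e)=(f(n),e)$: the first guarantees that $\pi_1\rtimes F_S\to(\pi_1\rtimes F_S)/K$ factors through $Q$, and the second further factorizes it through $Q/\aangles{f(N)}_Q=G_\theta\rtimes\Gamma_\theta$.

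The starting point for both containments is the observation that $([\alpha_r^*],r)\in\ker(\widehat\Phi_S)\subseteq K$ for every $r\in R_\theta$: by Lemma~\ref{lem:paths.and.jumps.commute}, its $\widehat\Phi_S$-image is $\theta$-homotopic to the discretization of $x_0\jto[r_{\rm rev}^{-1}]x_0$, a \jp consisting of at most $4\theta$ jumps, which therefore lies in $\CT_\theta$ and has trivial discretization by Theorem~\ref{thm:discrete.fund.group_iso_quotient.of.jump.fund.grp}. A direct semidirect-product calculation then shows that the assignments $n\mapsto([\alpha_n^*],n)$ and $n\mapsto([\alpha_n],e)$ are both homomorphisms $N\to\pi_1\rtimes F_S$ (this uses $\phi_S$-equivariance and the triviality of the $N$-action on $X$), and that conjugating $([\alpha_r^*],r)$ or $([\alpha_r],e)$ by any element of $\pi_1\rtimes F_S$ produces an element of the same form indexed by a conjugate of $r$. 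Since $N$ is generated as a group by the $F_S$-conjugates of $R_\theta$, one bootstraps from the generating data $([\alpha_r^*],r),([\alpha_r],e)\in K$ to the required containments for arbitrary $n\in N$. The main delicate point will be keeping track of the three distinct normal closures at play — in $F_S$, in $\pi_1\rtimes F_S$, and in $Q$ — and exploiting the triviality of the $N$-action on $X$ to convert $\phi_S(n)$ into honest conjugation by $f(n)$, which is precisely the feature that makes $f$ eligible for Lemma~\ref{lem:quotient.of.semidirect.prod}.
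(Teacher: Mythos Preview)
Your proposal is correct and follows essentially the same route as the paper: apply Lemma~\ref{lem:quotient.of.semidirect.prod} with $G=\pi_1(X,x_0)$, $H=F_S$, $N=\aangles{R_\theta}$, $f(n)=[\alpha_n]$, verify the conjugation and equivariance hypotheses using that relations act trivially on $X$, and then check that $\widehat\Phi_S([\alpha_r^*],r)$ is trivial for $r\in R_\theta$ so that the discretisation map factors through $Q$ and then through $G_\theta\rtimes\Gamma_\theta$. The paper packages the bootstrapping from $R_\theta$ to $\aangles{R_\theta}$ more tersely via the identities $\alpha_{r_1r_2}=\alpha_{r_1}\alpha_{r_2}$ and $\alpha_{wrw^{-1}}=\phi_S(w)(\alpha_r)$, whereas you spell it out through the homomorphism properties of $n\mapsto([\alpha_n^*],n)$ and $n\mapsto([\alpha_n],e)$; the content is the same.

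One small inaccuracy worth noting: the claim that conjugating $([\alpha_r],e)$ by \emph{any} element of $\pi_1\rtimes F_S$ yields an element ``of the same form indexed by a conjugate of $r$'' fails for conjugation by elements of the $\pi_1$ factor (you get $([\gamma\alpha_r\gamma^{-1}],e)$, which need not equal $([\alpha_{r'}],e)$ for any $r'$). Fortunately only $F_S$\=/conjugation is needed to reach all of $N=\aangles{R_\theta}_{F_S}$, and for that the claim is correct, so the argument survives; just tighten the wording.
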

\begin{proof}
 This will be an application of Lemma~\ref{lem:quotient.of.semidirect.prod}, where $\pi_1(X,x_0)$ and $F_S$ play the role of $G$ and $H$ respectively, and $N$ corresponds to $\aangles{R_\theta}$. Note that the function sending $r\in \aangles{R_\theta}$ to $[\alpha_r]\in\pi_1(X,x_0)$ satisfies
 \[
  \phi_S(r)[\gamma]=[\alpha_rr(\gamma)\alpha_r^*]=[\alpha_r][\gamma][\alpha_r]^{-1}=[\gamma]^{[\alpha_r]}
 \]
 and it is also $\phi_S$\=/equivariant, as we have
 \begin{align*}
  \phi_S(w)[\alpha_{r}] &=\bigbrack{\alpha_ww(\alpha_r)\alpha_w^*} \\
  &=\bigbrack{\alpha_ww\bigparen{\alpha_rw^{-1}(\alpha_w^*)}} \\
  &=\bigbrack{ \alpha_ww\bigparen{\alpha_{r}r(\alpha_{w^{-1}})} } = \bigbrack{\alpha_{wrw^{-1}} }
 \end{align*}
 (here we used essentially that $r$ acts as the identity on $X$). We can hence apply Lemma~\ref{lem:quotient.of.semidirect.prod} to obtain a short exact sequence
 \[
  1\to \ \aangles{[\alpha_r]\mid r\in R_\theta}_Q  \to\ 
  \frac{\pi_1(X,x_0)\rtimes_{\phi_S}F_S}{\aangles{\bigbrace{([\alpha_r^*],r)\bigmid r\in R_\theta}} } 
  \ \xrightarrow{\bar p}\  
  \bigparen{G_\theta\rtimes_{\bar\phi} \Gamma_\theta} \to 1 . 
 \]
 
 Note that in the expression above we used the fact that $[\alpha_r]^{-1}=[\alpha_r^*]$, $\alpha_{r_1r_2}=\alpha_{r_1}\alpha_{r_2}$ and that $\alpha_{wrw^{-1}}=\alpha_ww(\alpha_{r})\alpha_{w}^*=\phi_S(w)(\alpha_r)$ to deduce that
 \[
   \aangles{[\alpha_r]\mid r\in \aangles{R_\theta}}_Q= \aangles{[\alpha_r]\mid r\in R_\theta}_Q
 \]
 and
 \[
  \aangles{\bigbrace{([\alpha_r]^{-1},r)\bigmid r\in \aangles{R_\theta} }}
  =\aangles{\bigbrace{([\alpha_r^*],r)\bigmid r\in R_\theta}}.
 \]
 However, in general it is not possible to restrict to $r\in R_\theta$ in the definition of $G_\theta$, as $\alpha_{wrw^{-1}}$ is conjugate to $\alpha_r$ only in $\pi_1(X,x_0)\rtimes F_S$, and not in $\pi_1(X,x_0)$.
 
 The homomorphism $\widehat\Phi_S$ is a surjection of the group $\pi_1(X,x_0)\rtimes F_S$ onto $\thgrp\bigparen{(X,\wSdist),x_0}$. Given $r\in R_\theta$, we have 
 \[
  \widehat\Phi_S\bigparen{([\alpha_r^*],r)}
  =\bigbrack{\alpha_r^*\alpha_r\jto[r_{\rm rev}^{-1}]x_0}
  =\bigbrack{x_0\jto[r_{\rm rev}^{-1}]x_0}
 \]
 and the latter is trivial as it is a closed $\theta$\=/path of length at most $4\theta$. This implies that $\widehat\Phi_S$ factors through the quotient by $\aangles{\bigbrace{([\alpha_r^*],r)\bigmid r\in R_\theta}}$.
 
 To conclude the proof it is enough to note that $\widehat\Phi_S$ also factors to a surjection 
 \[
  \bigparen{G_\theta\rtimes_{\bar\phi} \Gamma_\theta} \to\
  \frac{\thgrp\big((X,\delta_S),x_0\big)}{\aangles{\widehat\Phi_S([\alpha_r^*],e)\mid r\in R_\theta} }.
 \]
\end{proof}

\begin{cor}
 If the paths $\alpha_r$ are null\=/homotopic in $X$ for every $r\in R$, then $\thgrp\bigparen{(X,\delta_S),x_0}$ is isomorphic to a quotient of $\pi_1(X,x_0)\rtimes \Gamma_\theta$.
\end{cor}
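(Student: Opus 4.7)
The plan is to invoke Proposition~\ref{prop:discr.fund.group.is.semidirect.Gamma} and verify that under the null\-/homotopy hypothesis both the correction quotient on the left hand side is vacuous and the group $G_\theta$ collapses to $\pi_1(X,x_0)$.

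The first step is to promote the hypothesis $[\alpha_r]=e$ for $r\in R$ to the statement that $[\alpha_r]=e$ for every $r\in\aangles{R}$. This uses two identities already recorded in the paper: (a) for $r\in\aangles{R}$ the homeomorphism $r\colon X\to X$ is the identity, since the action of $F_S$ factors through $\Gamma=F_S/\aangles{R}$, and hence $\alpha_{r_1r_2}=\alpha_{r_1}\,r_1(\alpha_{r_2})=\alpha_{r_1}\alpha_{r_2}$ whenever $r_1\in\aangles{R}$; and (b) $[\alpha_{wrw^{-1}}]=\phi_S(w)[\alpha_r]$, as verified inside the proof of Proposition~\ref{prop:discr.fund.group.is.semidirect.Gamma}. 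Together these show that the set $\{r\in F_S\mid[\alpha_r]=e\}$ is closed under products and conjugation, so once it contains $R$ it contains $\aangles{R}$.

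With this in hand the corollary is almost immediate. First, for every $r\in R_\theta$ one has $\widehat\Phi_S([\alpha_r^*],e)=[\alpha_r^*]=e$ in $\thgrp((X,\wSdist),x_0)$, so the quotient on the left hand side of Proposition~\ref{prop:discr.fund.group.is.semidirect.Gamma} is simply $\thgrp((X,\wSdist),x_0)$ itself. Second, since $R_\theta\subset\aangles{R}$, we have $[\alpha_r]=e$ for every $r\in\aangles{R_\theta}$, so the generating sets in the definition of $G_\theta$ are both trivial and $G_\theta=\pi_1(X,x_0)$. Finally, $\phi_S(r)[\gamma]=[\alpha_r\gamma\alpha_r^*]=[\gamma]$ whenever $r\in\aangles{R_\theta}$, so $\phi_S$ descends to $\Gamma_\theta=F_S/\aangles{R_\theta}$ and the induced action $\bar\phi$ coincides with the action used in $\pi_1(X,x_0)\rtimes\Gamma_\theta$.

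Putting the three points together, Proposition~\ref{prop:discr.fund.group.is.semidirect.Gamma} yields a surjection $\pi_1(X,x_0)\rtimes\Gamma_\theta\twoheadrightarrow\thgrp((X,\wSdist),x_0)$, which is the desired conclusion. I do not foresee any real obstacle; the only part that needs a line of justification is the propagation of null\-/homotopy from $R$ to the whole normal closure $\aangles{R}$, and that reduces to the two algebraic identities above.
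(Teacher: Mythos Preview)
Your argument is correct and is precisely the verification the paper leaves implicit (the corollary is stated without proof, as an immediate consequence of Proposition~\ref{prop:discr.fund.group.is.semidirect.Gamma}). The one small imprecision is the set $\{r\in F_S\mid[\alpha_r]=e\}$: for general $r\in F_S$ the path $\alpha_r$ need not be closed, so you should restrict to $r\in\aangles{R}$ from the outset; with that adjustment your propagation argument (kernel of the homomorphism $\aangles{R}\to\pi_1(X,x_0)$, $r\mapsto[\alpha_r]$, is normal in $F_S$ and contains $R$) goes through exactly as written.
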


\section{Computing the discrete fundamental group of warped cones}\label{sec:coarse.fund.grp.wc}

Here we will continue to assume that $X$ has homotopy rectifiable paths, and, for our main result, we will also need to assume that $X$ is compact.

Let $F_S\curvearrowright (X,d)$ be a continuous action and recall that $\wSdist[t]$ denotes the rescaled metric $\dist[t]$ on $X$ warped by $F_S$. Note that if $X$ is compact and $(X,\wSdist)$ is jumping\=/geodesic, then also $(X,\wSdist[t])$ is jumping\=/geodesic for every $t\geq 1$. Indeed, by compactness, for every $x,x'\in X$ there exist $y_1,\ldots,y_n\in X$ and $\vec s_1\ldots \vec s_n\in S\sqcup S^{-1}$ so that
\[
 \wSdist[t](x,x')=d^t(x,y_1)+1+d^t\bigparen{\vec s_1(y_1),y_2}+1+\cdots+d^t\bigparen{\vec s_n(y_n),x'}.
\]
As there is a jumping geodesic in $(X,\wSdist)$ between $x$ and $y_1$, we deduce that that jumping\=/geodesic must be an actual continuous geodesic in $(X,d)$ and hence the same path gives us a geodesic in $(X,d^t)$. The same argument holds for all the contributions of $d^t$, and we can thus build a jumping\=/geodesic between $x$ and $x'$ in $(X,\wSdist[t])$.

Let ${\rm Ell}_\theta\coloneqq\bigbrace{w\in F_S\bigmid \abs{w}\leq 4\theta,\ \fix(w)\neq \emptyset}$ be the set of elliptic elements of length at most $4\theta$ (in general this set is not closed under conjugation because of the condition $\abs{w}\leq 4\theta$). Note that if $y\in X$ is a fixed point of $w$ and $\beta$ is a continuous path joining $x_0$ to $y$, then $\alpha_w w(\beta)\beta^*$ is a closed loop in $X$. 

\begin{thm}\label{thm:disc.fund.group.of.warped.cones}
 Let $X$ be compact, $(X,\wSdist)$ jumping\=/geodesic and fix $\theta\in\NN$. Then there exists a $t_0$ large enough so that for every $t\geq t_0$ and $w\in F_S$ there exists a path $\gamma$ so that $([\gamma],w)\in\ker\bigparen{\widehat\Phi_S\colon\pi_1(X)\rtimes_{\phi_S} F_S\to\thgrp(X,\wSdist[t])}$ if and only if $w\in\aangles{\rm Ell_\theta}$. 
 
 Moreover, if $X$ is semi\=/locally simply connected we can choose $t_0$ large enough so that for every $t\geq t_0$ we have
 \[
  \thgrp\bigparen{(X,\wSdist[t]),x_0}\cong\bigparen{\pi_1(X,x_0)\rtimes_{\phi_S}F_S}\big/\aangles{K_\theta}
 \]
 where 
 \[
  K_\theta\coloneqq\bigbrace{\bigparen{[\beta w(\beta^*)\alpha_w^*],w}\bigmid w\in{\rm Ell}_\theta,\ \beta(0)=x_0,\ \beta(1)\in\fix(w)}.
 \]
\end{thm}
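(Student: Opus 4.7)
The plan is to realise $\thgrp\bigparen{(X,\wSdist[t]),x_0}$ as $\bigparen{\pi_1(X,x_0)\rtimes_{\phi_S}F_S}/\ker(\widehat\Phi_S)$ by combining Theorem~\ref{thm:jump.fund.grp_iso_semidirect.prod} and Theorem~\ref{thm:discrete.fund.group_iso_quotient.of.jump.fund.grp}, and then to determine $\ker(\widehat\Phi_S)$ for $t$ large enough. The inclusion $K_\theta\subseteq\ker(\widehat\Phi_S)$ is immediate from Lemma~\ref{lem:short.jumps.are.trivial}: for $w\in{\rm Ell}_\theta$ with fixed point $y$ we have $d^t(y,w\cdot y)+\abs{w}=\abs{w}\leq 4\theta$, so $w$ lies in $\shjp$ for $\wSdist[t]$; running the construction of that lemma with the joining geodesic $\varepsilon$ taken as the constant path at $y$ yields exactly $\bigparen{[\beta w(\beta^*)\alpha_w^*],w}$ as the produced kernel element, for every path $\beta$ from $x_0$ to $y$. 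Conjugating these generators by arbitrary elements of $F_S$ and multiplying, we obtain $([\gamma],w)\in\ker(\widehat\Phi_S)$ for every $w\in\aangles{{\rm Ell}_\theta}$, proving one direction of the first assertion.

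For the ``only if'' direction we control the jumping patterns of elements of $\CT_\theta$ via equicontinuity and compactness. A closed $\vec{\bm\gamma}\in\CT_\theta$ based at $y\in X$ decomposes into four sub-\jps of warped length at most $\theta$, hence has at most $4\theta$ jumps while its continuous sub-paths have total $d$-length at most $4\theta/t$. Applying Corollary~\ref{cor:paths.jump.first} to move the jumps to the end, $\vec{\bm\gamma}$ becomes $\gamma_0\jto[w_{0,\rm rev}^{-1}]y$ for some $w_0\in F_S$ with $\abs{w_0}\leq 4\theta$; the endpoint relation forces $\gamma_0(1)=w_0\cdot y$, and uniform continuity of the finitely many homeomorphisms in $S\cup S^{-1}$ iterated at most $4\theta$ times on the compact space $X$ implies that $d(y,w_0\cdot y)$ can be made arbitrarily small by taking $t$ large enough, uniformly in $\vec{\bm\gamma}$ and $y$. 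The set $W_\theta\coloneqq\bigbrace{w\in F_S\bigmid\abs{w}\leq 4\theta,\ \fix(w)=\emptyset}$ is finite, and compactness of $X$ gives $m\coloneqq\min_{w\in W_\theta}\inf_{y\in X} d(y,w\cdot y)>0$. Choosing $t_0$ large enough that $d(y,w_0\cdot y)<m$ for all such $\vec{\bm\gamma}$ and every $t\geq t_0$ forces $w_0\in{\rm Ell}_\theta\cup\{e\}$; since free homotopies conjugate the jumping pattern, the $F_S$-components of elements of $\ker(\widehat\Phi_S)=\Phi_S^{-1}\bigparen{\aangles{{\rm FT}_\theta}}$ lie in $\aangles{{\rm Ell}_\theta}$.

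For the refined statement we further enlarge $t_0$ using two compactness facts: (i) semi-local simple connectivity of $X$ together with a Lebesgue-number argument on a finite open cover by neighbourhoods whose loops null-homotope in $X$ yields $\varepsilon>0$ below which every continuous loop is null-homotopic; and (ii) for each $w$ in the finite set of elements of ${\rm Ell}_\theta$ of length at most $4\theta$, compactness of $\fix(w)$ and continuity of $y\mapsto d(y,w\cdot y)$ give $\delta_w>0$ so that $d(y,w\cdot y)<\delta_w$ implies $d(y,\fix(w))<\varepsilon/4$. Taking $t_0$ large enough so that the previous paragraph's bound also lies below $\min_w\delta_w$ and that $4\theta/t_0<\varepsilon/4$: given $(\eta,w)\in\ker(\widehat\Phi_S)$, decompose $w=\prod_i u_i e_i u_i^{-1}$ with $e_i\in{\rm Ell}_\theta$ and use conjugates of $K_\theta$-generators to reduce, modulo $\aangles{K_\theta}$, to an element $(\eta',e)$ with $\Phi_S(\eta',e)=[\eta']\in\aangles{{\rm FT}_\theta}$. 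Writing $[\eta']$ as a product of free-homotopy generators $[\vec{\bm\beta}_j\vec{\bm\gamma}_j\vec{\bm\beta}_j^{*}]$ with $\vec{\bm\gamma}_j\in\CT_\theta$ based at $y_j$ and jumping-pattern word $w_{0,j}\in{\rm Ell}_\theta$, property (ii) supplies a genuine fixed point $y'_j\in\fix(w_{0,j})$ within $\varepsilon/4$ of $y_j$, and we compare the factor with the $K_\theta$-generator whose connecting path is $\vec{\bm\beta}_j$ extended by a short path from $y_j$ to $y'_j$. The discrepancy is a continuous loop of $d$-diameter less than $\varepsilon$, null-homotopic by (i). The main obstacle is this final comparison: the conjugating path $\vec{\bm\beta}_j$ is not itself short, so some care is needed to verify that only the ``local'' portion of the loop near $y_j$ is modified when the base point is moved to $y'_j$, leaving a genuinely short continuous discrepancy based at $x_0$ after the conjugating parts cancel.
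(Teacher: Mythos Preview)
Your overall strategy matches the paper's: identify $\ker(\widehat\Phi_S)$ with $\Phi_S^{-1}\bigparen{\aangles{\rm FT_\theta}}$ via Theorem~\ref{thm:discrete.fund.group_iso_quotient.of.jump.fund.grp}, show the easy inclusion $K_\theta\subseteq\ker(\widehat\Phi_S)$ exactly as you do, and then use compactness to force the jumping pattern of any $\vec{\bm\gamma}'\in\CT_\theta$ into ${\rm Ell}_\theta$ once $t$ is large. The paper's bookkeeping for the converse is somewhat different: rather than rearranging $\vec{\bm\gamma}'$ via Corollary~\ref{cor:paths.jump.first} and invoking uniform continuity on endpoints, it introduces nested sets $C_w^{(i)}(y,r)$ (iterated $r$-neighbourhoods of images under successive letters of $w$) which trap the \emph{entire} image of each continuous segment $\gamma_i$, and then argues that if $w'$ had no fixed point one would have $y\notin C_{w'^{-1}_{\rm rev}}(y,4\theta/t)$, contradicting closedness. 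Both arguments are equivalent in spirit, but the $C_w^{(i)}$ formulation pays off in the ``moreover'' part because it controls the whole rearranged path $\gamma''$, not just its endpoints.

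Two points where your write-up needs tightening. First, your preliminary reduction to $(\eta',e)$ is superfluous: once you expand $[\eta']$ as a product of conjugates $[\vec{\bm\beta}_j\vec{\bm\gamma}_j\vec{\bm\beta}_j^*]$ with $\vec{\bm\gamma}_j\in\CT_\theta$, you are back to analysing arbitrary elements of $\Phi_S^{-1}({\rm FT}_\theta)$ anyway. The paper instead shows directly that any such element is conjugate (by $\Phi_S^{-1}(\vec{\bm\beta}\xi^*)$, with $\xi$ a \emph{continuous} path to the basepoint of $\vec{\bm\gamma}'$) to an element of $K_\theta$. Second, and more substantively, your condition (ii) only gives $d(y_j,\fix(w_{0,j}))<\varepsilon/4$; this bounds the basepoint but not the diameter of the rearranged continuous piece $\gamma''$, nor the size of $w_{0,j}(\eta)$ for the short joining path $\eta$. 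The discrepancy loop $(\gamma'')^*\eta\, w_{0,j}(\eta^*)$ is null-homotopic only if all three pieces sit inside a single $\varepsilon$-ball about the fixed point, and your stated constants do not guarantee this. The paper secures it by requiring $C_{w'^{-1}_{\rm rev}}(y,4\theta/t_0)\subseteq B(z,\epsilon'')$ together with auxiliary radii $\epsilon'\geq\epsilon''$ so that $w'\bigparen{B(z,\epsilon')}\subseteq B(z,\epsilon)$ and any two points in $B(z,\epsilon'')$ are joined inside $B(z,\epsilon')$. Your uniform-continuity machinery can certainly be upgraded to yield the same control over the full image of $\gamma''$, but as written this step is incomplete; the obstacle you flag at the end is real, and the paper's $C_w^{(i)}$ sets are precisely the device that resolves it.
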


\begin{proof}
 Following the proof of Lemma \ref{lem:short.jumps.are.trivial} it is easy to see that ${K_\theta}\subseteq\ker{\widehat\Phi_S}$ for $t$ large enough. Therefore, for every $w\in\aangles{\rm Ell_\theta}$ we have explicitly exhibited the required path $\gamma$ so that $([\gamma],w)$ is in the kernel.
 
 We now prove the converse. Given a word $w=\vec s_1\cdots \vec s_{\abs{w}}$, a point $x\in X$ and a radius $r\geq 0$, we define a sequence of sets as follows: $C_w^{(0)}(x,r)$ is the ball $B(x,r)$ in $(X,\dist)$ and for $1\leq i\leq \abs{w}$ we let
 \[
  C_w^{(i)}(x,r)\coloneqq N_r\Bigparen{\vec s_i\bigparen{C_w^{(i-1)}(x,r)}}
 \]
 where $N_r$ is the neighbourhood of radius $r$ with respect to the distance $\dist$. Finally, let $C_w(x,r)\coloneqq C_w^{(\abs{w})}\! (x,r)$.
 
 Note that as $r$ tends to zero, the set $C_w^{(i)}(x,r)$ converges to the single point $\vec s_i\cdots\vec s_1(x)$ and in particular $C_w(x,r)$ converges to $w_{\rm rev}(x)$. By compactness, if $w_{\rm rev}$ does not have fixed points there exists a radius $r_w>0$ so that $x\notin C_w(x,r)$ for every $x\in X$ and $r\leq r_w$. We let
 \[
  t_0\coloneqq \max\left\{\frac{4\theta}{r_w}\ \middle|\ \abs{w}\leq 4\theta,\ \fix(w_{\rm rev})=\emptyset\right\}\cup\{1\}.
 \]
 
 Fix $t\geq t_0$. By Theorem~\ref{thm:discrete.fund.group_iso_quotient.of.jump.fund.grp}, the kernel of the discretisation map is $\angles {\rm FT_\theta}$; it is therefore enough to show that if $\Phi_S([\gamma],w)\in {\rm FT_\theta}$ then $w$ is conjugate to an element of $\rm Ell_\theta$. 
 
 Let $([\gamma],w)$ be such a pair and let $\vec{\bm\gamma}=\gamma\psi_S(w)=\gamma\alpha_w\jto[w^{-1}_{\rm rev}]x_0$. By hypothesis, there exists a \jp $\vec{\bm\gamma}'\in\CT_\theta$ which is freely\=/homotopic to $\vec{\bm\gamma}$. Tracing the base point under the free homotopy, we thus obtain a \jp $\vec{\bm \beta}$ so that $\vec{\bm\gamma}$ is homotopic to $\vec{\bm\beta}\vec{\bm\gamma}'\vec{\bm\beta}^*$. 
Choose a continuous path $\xi$ going from $x_0$ to $\vec{\bm\gamma}'(0)$ and let $([\gamma'],w')=\Phi_S^{-1}\bigparen{\xi\vec{\bm\gamma}'\xi^*}$, where we require that the word $w'$ matches exactly the sequence of (inverses of) jumps in the path $\vec{\bm\gamma}'$. Then $\abs{w'}\leq 4\theta$ and $w'$ is conjugated to $w$, as $([\gamma],w)$ is conjugated to $([\gamma'],w')$ by $\Phi_S^{-1}(\vec{\bm\beta}\xi^*)$. To prove our claim it is hence enough to show that $w'$ is in ${\rm Ell}_\theta$ \emph{i.e.} that it has a fixed point in $X$.
 
 Let $\vec{\bm\gamma}'=\gamma_0\jto[\vec s_1]\cdots\jto[\vec s_n]\gamma_n$ (so that $w'=\vec s_1^{\,-1}\cdots\vec s_n^{\,-1}$) and let $y\coloneqq\gamma_0(0)$. It is easy to show by induction that the image of $\gamma_i$ is contained in $C_{w'^{-1}_{\rm rev}}^{(i)}\bigparen{y,4\theta/t}$ (see Figure~\ref{fig:jpath.in.Cw}). If $w'$ (and hence $w'^{-1}$) did not have fixed points, by construction we would have $y\notin C_{w'^{-1}_{\rm rev}}\bigparen{y,4\theta/t}$ because $4\theta/t\leq r_{w'^{-1}_{\rm rev}}$ by definition. Still, $\vec{\bm\gamma}'$ is closed and therefore $\gamma_n(1)=y$ is in $C_{w'^{-1}_{\rm rev}}\bigparen{y,4\theta/t}$, a contradiction.
 
 \begin{figure}
  \centering
  \begin{tikzpicture}[scale=1.3]
  \draw[dashed] (2,0) circle (0.45)node[above, fill=white, opacity=0.7]{$\vec s_1(y)$} node[above]{$\vec s_1(y)$} 
      (4.5,0) circle (0.7)node[above]{$\vec s_2\vec s_1(y)$} ;
  \draw (0,0)  node[bnode,inner sep=0.7]{} circle (0.5) node[above]{$y$}
      (2,0)  node[bnode,inner sep=0.5]{} circle (0.8)
    (4.5,0)  node[bnode,inner sep=0.5]{} circle (1) ; 
  \draw[thick,middlearrow=stealth] (0,0) ..controls (0.1,-0.085) .. (0.3,-0.3)
	  node[pos=0.5,below]{$\gamma_0$}
	  node[bnode,inner sep=0.6]{};
  \draw[thick,middlearrow=stealth] (1.9,-0.25) node[bnode,inner sep=0.6]{} 
	  ..controls (2,-0.35) .. (2.33,-0.55)
	  node[pos=0.5,below]{$\gamma_1$}
	  node[bnode,inner sep=0.6]{};
  \draw[thick,middlearrow=stealth] (4.2,-0.45) node[bnode,inner sep=0.6]{} 
	  ..controls (4.4,-0.6) and (4.47,-0.45) .. (4.75,-0.6) 
	  node[pos=0.5,below]{$\gamma_2$}
	  node[bnode,inner sep=0.6]{};
  \draw [densely dotted, bend right,->,shorten >= 2 pt](0.3,-0.3) to (1.9,-0.25) ;
  \draw [densely dotted, bend right,->,shorten >= 2 pt](2.33,-0.55) to (4.2,-0.45) ;
  \draw (1,-0.7) node{$\vec s_1$}
      (3.25,-0.93) node{$\vec s_2$}
      (-0.3,0.7) node[yshift=1.2 ex,xshift=-2.5 ex]{$C_{w'^{-1}_{\rm rev}}^{(0)}\Bigparen{y,\frac{4\theta}{t}}$}
      (1.55,0.95) node[yshift=1.3 ex,xshift=-0.8 ex]{$C_{w'^{-1}_{\rm rev}}^{(1)}\Bigparen{y,\frac{4\theta}{t}}$}
      (3.9,1.1) node[yshift=1.5 ex,xshift=-2 ex]{$C_{w'^{-1}_{\rm rev}}^{(2)}\Bigparen{y,\frac{4\theta}{t}}$};
  \end{tikzpicture}
  \caption{The jumping-path $\vec{\bm\gamma}'$ is contained in the sets $C_{w'^{-1}_{\rm rev}}^{(i)}\Bigparen{y,\frac{4\theta}{t}}$, where $w'^{-1}_{\rm rev}$ is the word $\vec s_1\cdots\vec s_n$.} 
  \label{fig:jpath.in.Cw}
 \end{figure}

\

Assume now that $X$ is semi\=/locally simply connected. Since it is compact, there exists a constant $\epsilon>0$ small enough so that every path contained in a ball of radius $\epsilon$ is homotopic in $X$ to a constant path. Moreover, by compactness there exist $\epsilon'\geq\epsilon''>0$ so that:
\begin{itemize}
 \item for every $w\in{\rm Ell}_\theta$ and $z\in\fix(w)$ we have $w\bigparen{B(z,\epsilon')}\subseteq B(z,\epsilon)$;
 \item any two points in $B(x,\epsilon'')$ can be joined with a continuous path contained in $B(x,\epsilon')$ (recall that $X$ is locally path\=/connected). 
\end{itemize}
Finally, we can further enlarge $t_0$ so that if $w\in {\rm Ell}_\theta$ then for every $y\in X$ such that $y\in C_{w_{\rm rev}^{-1}}\bigparen{y,4\theta/t_0}$ there exists a $z\in\fix(w)$ so that 
\begin{equation}\label{eq:condition.on.Cw}
C_{w_{\rm rev}^{-1}}\Bigparen{y,\frac{4\theta}{t_0}}\subseteq B(z, \epsilon''). 
\end{equation}

 Let again $t\geq t_0$ and $\Phi_S([\gamma],w)\in{\rm FT_\theta}$
 and let $\vec{\bm \gamma}',[\gamma'],w',y$ and $\xi$ be as above; it will be enough to show that $([\gamma'],w')\in K_\theta$. By the previous argument, we know that $w'\in {\rm Ell}_\theta$, therefore we only need to understand the continuous part of $\vec{\bm \gamma}'$. Note also that we already know that $y$ belongs to $C_{w_{\rm rev}^{-1}}\bigparen{y,4\theta/t_0}$ because $\vec{\bm \gamma}$ is a closed \jp, and hence by \eqref{eq:condition.on.Cw} there exists a fixed point $z\in\fix(w')$ so that $y$ is in $B(z, \epsilon'')$.
 
 It follows from Lemma~\ref{lem:paths.and.jumps.commute} that $\vec{\bm\gamma}'$ is homotopic to $\gamma''\jto[w'^{-1}_{\rm rev}]y$, where $\gamma''$ is an appropriate continuous path that we can suppose to be completely contained in $w'^{-1}\bigparen{C_{w'^{-1}_{\rm rev}}\bigparen{y,4\theta/t}}$. Note that the latter is in turn contained in $w'^{-1}\bigparen{B(z,\epsilon'')}$ and hence in $B(z,\epsilon)$. 
 
By construction, there exists a continuous path $\eta$ joining $y$ to $z$ with image contained in $B(z, \epsilon')$. Then both $\eta,w'(\eta)$ and $\gamma''$ are contained in $B(z, \epsilon)$ and therefore the closed path $(\gamma'')^*\eta w'(\eta^*)$ is null\=/homotopic. Note now that by definition $\bigparen{[\xi\eta w'(\eta^*\xi^*)\alpha_{w'}^*],w'}$ is in $K_\theta$ (see Figure \ref{fig:picture.thm.7}). Since we have
 \begin{align*}
  \Phi_S\bigparen{[\gamma'],w']}
      &=\Bigbrack{\xi\gamma''\jto[w'^{-1}_{\rm rev}]\xi^*} \\
      &=\Bigbrack{\xi\gamma''(\gamma'')^*\eta w'(\eta^*)\jto[w'^{-1}_{\rm rev}]\xi^*} \\
      &=\Bigbrack{\xi\eta w'(\eta^*\xi^*)\jto[w'^{-1}_{\rm rev}]x_0}
	=\Phi_S\bigparen{[\xi\eta w'(\eta^*\xi^*)\alpha_{w'}^*],w'} ,
 \end{align*}
 we can conclude that $[\gamma']=[\alpha_{w'}w'(\xi\eta)\eta^*\xi^*]$ because $\Phi_S$ is injective.
 
 \begin{figure}
  \centering
  \begin{tikzpicture}[scale=1.2]
    \draw[thick, middlearrow=stealth](0,0)node[bnode,inner sep=0.7]{}node[left]{$x_0$} 
	    ..controls(0.5,0.2) and (1.5,-0.1).. 
	    (2,0)node[above]{$\alpha_{w'}$} ..controls(2.5,0.2) and (3.5,-0.1).. 
	    (4,0)node[bnode,inner sep=0.7]{}node[right]{$w'(x_0)$};
    \draw[thick, middlearrow=stealth](1.5,2.5) node[bnode,inner sep=0.7]{}node[left]{$y$}
	    ..controls(1.8,2.45) and (2.1,2.45).. 
	    (2.5,2.5)node[pos=0.52,below]{$\gamma''$}node[bnode,inner sep=0.7]{}node[right]{$w'(y)$};
    \draw[thick, middlearrow=stealth](0,0) ..controls(0.8,0.4) and (1.3,2.4).. 
	    (1.5,2.5)node[pos=0.5,left]{$\xi$};
    \draw[thick, middlearrow=stealth] (4,0)..controls  (3.2,0.4) and (2.7,2.4) .. 
	    (2.5,2.5)node[pos=0.5,right]{$w'(\xi)$};

    \draw [densely dotted, bend right,->,shorten >= 5 pt](0,0) to (4,0) ;
    \draw [densely dotted, bend left,->,shorten >= 5 pt](1.5,2.5) to (2.5,2.5);
    \draw (2,-0.8)node{$w'$};
    
    \draw[thick, middlearrow=stealth] (1.5,2.5) ..controls  (1.75,2.9) .. (2,3.2)
	  node[pos=0.7,left]{$\eta$} 
	  node[bnode,inner sep=0.7]{}node[above]{$z$};
    \draw[thick, middlearrow=stealth] (2.5,2.5) ..controls  (2.25,2.9) .. (2,3.2)
	  node[pos=0.7,right]{$w'(\eta)$} ;
  \end{tikzpicture}
  \caption{Jumping paths in the kernel differ from elements of $K_\theta$ by short null-homotopic paths}
  \label{fig:picture.thm.7}
 \end{figure}
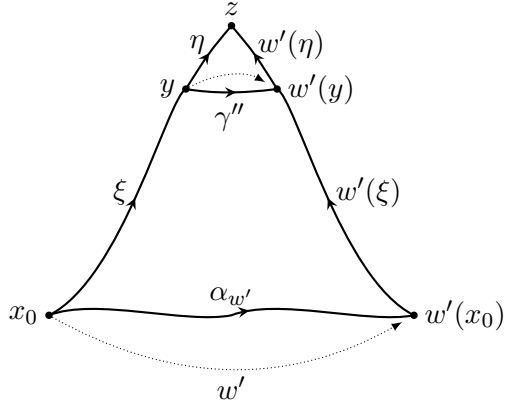

\end{proof}

Theorem \ref{thm:disc.fund.group.of.warped.cones} immediately allows us to compute the discrete fundamental groups of various warped cones. In particular we have the following:

\begin{cor}
 If $F_S$ acts by rotations on the sphere $\SS^2$, then $\thgrp\bigparen{(\SS^2,\wSdist[t]),x_0}=\{0\}$ for every $t$ and $\theta\geq 1$.
\end{cor}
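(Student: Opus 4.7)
The plan is to invoke Corollary~\ref{cor:S.in.shortjumps_thgrp.quotient.of.pi} directly, bypassing the more delicate Theorem~\ref{thm:disc.fund.group.of.warped.cones} (which, as stated, requires $t$ to be large). The crucial observation is that every element of $SO(3)$ acts on $\SS^2$ with a non-empty fixed-point set: a non-trivial rotation fixes the two antipodal points on its axis, and the identity fixes all of $\SS^2$. Thus for every generator $s \in S$ we may pick $y_s \in \fix(s)$, and then
\[
d^t(y_s, s \cdot y_s) + \abs{s} = 0 + 1 = 1 \leq 4\theta
\]
for every $t \geq 1$ and every $\theta \geq 1$. Hence $s \in \shjp$ for all $s \in S$.

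First I would handle integer $\theta$. Since $(\SS^2, d^t)$ is geodesic and $S \subseteq \shjp$, Corollary~\ref{cor:S.in.shortjumps_thgrp.quotient.of.pi} applies and yields that $\thgrp\bigparen{(\SS^2,\wSdist[t]), x_0}$ is a quotient of $\pi_1(\SS^2, x_0)$. Since $\SS^2$ is simply connected, this quotient is trivial.

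To extend to non-integer $\theta \geq 1$, set $n = \lfloor \theta \rfloor \geq 1$. Because $\SS^2$ is compact and $(\SS^2, d^t)$ is proper geodesic, the warped space $(\SS^2, \wSdist[t])$ is jumping-geodesic and hence $1$-geodesic. Lemma~\ref{lem:qi.induce.maps.on.disc.fund.grp}(\ref{item:lem:change.parameter}) then provides a surjection $\thgrp[n]\bigparen{(\SS^2,\wSdist[t]),x_0} \twoheadrightarrow \thgrp\bigparen{(\SS^2,\wSdist[t]),x_0}$, so triviality of the source forces triviality of the target.

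No real obstacle arises: the only ingredients are the elementary linear-algebraic fact that rotations in odd-dimensional Euclidean space always possess a fixed axis, together with the simple-connectedness $\pi_1(\SS^2) = \{0\}$, both of which feed directly into the previously established machinery.
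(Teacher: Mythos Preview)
Your argument is correct and, in fact, slightly cleaner than the route the paper implicitly takes. The paper places this corollary immediately after Theorem~\ref{thm:disc.fund.group.of.warped.cones} and derives it from that theorem: since every element of $\SO(3)$ fixes a pair of antipodal points on $\SS^2$, every word $w\in F_S$ with $\abs{w}\leq 4\theta$ lies in ${\rm Ell}_\theta$, so (using $\pi_1(\SS^2)=0$) the set $K_\theta$ contains all the generators and hence $\aangles{K_\theta}=\pi_1(\SS^2)\rtimes F_S=F_S$, forcing the quotient to vanish. To obtain the conclusion for \emph{every} $t$ (rather than only $t\geq t_0$) from that theorem one has to observe that only the easy inclusion $\aangles{K_\theta}\subseteq\ker\widehat\Phi_S$ is being used, and that inclusion---being essentially Lemma~\ref{lem:short.jumps.are.trivial}---holds with no lower bound on $t$.

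Your route via Corollary~\ref{cor:S.in.shortjumps_thgrp.quotient.of.pi} bypasses that subtlety entirely: you feed the same fixed\=/point observation directly into the more elementary machinery of Section~\ref{sec:criteria.for.computations}, and the restriction to integer $\theta$ followed by the surjection of Lemma~\ref{lem:qi.induce.maps.on.disc.fund.grp}(\ref{item:lem:change.parameter}) is the natural way to reach all $\theta\geq 1$. Both arguments ultimately rest on the same two facts---Euler's rotation theorem and $\pi_1(\SS^2)=\{0\}$---but yours makes the independence from $t$ transparent without any appeal to the proof of Theorem~\ref{thm:disc.fund.group.of.warped.cones}.
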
 

For level sets of warped cones, Proposition \ref{prop:discr.fund.group.is.semidirect.Gamma} yields a much sharper result. Indeed, let again $\Gamma=\angles{S\mid R}$ be a presentation of a (not necessarily finitely presented) finitely generated group, and let $\Gamma_\theta$ be the finitely presented group $\angles{S\mid R_\theta}$ where $R_\theta$ is the subset of $\aangles{R}$ of words of length at most $4\theta$. Then Theorem~\ref{thm:disc.fund.group.of.warped.cones} implies the following:

\begin{cor}\label{cor:discr.fund.grp.of.free.Gamma.warped.cone}
 Let $X$ be compact and semi\=/locally simply connected and let $\Gamma\curvearrowright X$ be a free action of $\Gamma=\angles{S\mid R}$ so that $F_S\curvearrowright X$ is jumping\=/geodesic. Then for every $t$ large enough we have
 \[
  \thgrp\bigparen{(X,\wSdist[t]),x_0}\cong 
  \frac{\pi_1(X)\rtimes_{\phi_S}F_S}{\aangles{\bigbrace{([\alpha_r^*],r)\bigmid r\in R_\theta}} }.  
 \]
\end{cor}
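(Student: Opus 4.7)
The corollary follows by specialising Theorem~\ref{thm:disc.fund.group.of.warped.cones} and identifying the set $K_\theta$ explicitly under the freeness hypothesis. The plan is to trace how the freeness of $\Gamma\curvearrowright X$ collapses the data defining $K_\theta$ down to the indexing set $R_\theta$.

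First, I would observe the following dichotomy for a free action: for each $w\in F_S$, either $w$ acts as the identity on $X$ (in which case $\fix(w)=X$), or $w$ acts as a non\=/trivial element of $\Gamma$ and, by freeness, $\fix(w)=\emptyset$. Since the kernel of the natural surjection $F_S\twoheadrightarrow\Gamma$ is precisely $\aangles{R}$, this means
\[
{\rm Ell}_\theta=\bigbrace{w\in F_S\bigmid \abs{w}\leq 4\theta,\ \fix(w)\neq\emptyset}=\bigbrace{w\in\aangles{R}\bigmid \abs{w}\leq 4\theta}=R_\theta.
\]

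Next, I would simplify the expression $([\beta\, w(\beta^*)\,\alpha_w^*],w)$ appearing in the definition of $K_\theta$ for $w\in R_\theta$. Since such a $w$ acts as the identity on the whole of $X$, we have $w(\beta^*)=\beta^*$ for any continuous path $\beta$ starting at $x_0$; hence $\beta\, w(\beta^*)\,\alpha_w^* = \beta\beta^*\alpha_w^*$, which is path\=/homotopic to $\alpha_w^*$. Consequently, regardless of the choice of the fixed point $\beta(1)\in\fix(w)=X$ and of the path $\beta$ reaching it, the corresponding element in $\pi_1(X,x_0)\rtimes_{\phi_S}F_S$ coincides with $([\alpha_w^*],w)$. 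Therefore
\[
 K_\theta=\bigbrace{([\alpha_r^*],r)\bigmid r\in R_\theta}.
\]

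Finally, I would invoke Theorem~\ref{thm:disc.fund.group.of.warped.cones} (which applies since $X$ is compact, semi\=/locally simply connected, and $F_S\curvearrowright X$ is jumping\=/geodesic with homotopy rectifiable paths): for every $t$ sufficiently large, the map $\widehat\Phi_S$ induces an isomorphism
\[
\thgrp\bigparen{(X,\wSdist[t]),x_0}\cong\bigparen{\pi_1(X,x_0)\rtimes_{\phi_S}F_S}\big/\aangles{K_\theta},
\]
and substituting the formula for $K_\theta$ above yields precisely the stated presentation. There is no serious obstacle here: the only subtle point is to notice that in the free\=/action case the set $\fix(w)$ is either empty or everything, which is what makes both the indexing set and the loops $\beta w(\beta^*)\alpha_w^*$ collapse to the combinatorial data encoded by $R_\theta$ and $\alpha_r$.
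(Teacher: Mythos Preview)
Your proof is correct and follows essentially the same route as the paper: both arguments observe that freeness forces ${\rm Ell}_\theta=R_\theta$, then simplify $\beta\,w(\beta^*)\,\alpha_w^*$ to $\alpha_r^*$ using that $r\in R_\theta$ acts trivially, and conclude by invoking Theorem~\ref{thm:disc.fund.group.of.warped.cones}.
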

\begin{proof}
 An element of $K_\theta$ is of the form $\bigparen{[\beta w(\beta^*)\alpha_w^*],w}$ with $w\in{\rm Ell}_\theta$, $\beta(0)=x_0$ and $\beta(1)\in\fix(w)$. Since the action is free, ${\rm Ell}_\theta$ is equal to $R_\theta$, and hence $w=r\in R_\theta$. Moreover, we have $\beta w(\beta^*)=\beta r(\beta^*)=\beta\beta^*\sim x_0$ and hence $\bigparen{[\beta w(\beta^*)\alpha_w^*],w}=\bigparen{[\alpha_r^*],r}$. The statement now follows from Theorem~\ref{thm:disc.fund.group.of.warped.cones}.
\end{proof}

The proof of Proposition~\ref{prop:discr.fund.group.is.semidirect.Gamma} immediately implies the following:

\begin{cor}\label{cor:discr.fund.grp.of.free.Gamma.warped.cone.exact.sequence}
 Under the hypotheses of Corollary~\ref{cor:discr.fund.grp.of.free.Gamma.warped.cone}, there is a short exact sequence
 \[
  1\to \ \aangles{[\alpha_r]\mid r\in R_\theta}_{\thgrp\paren{X,\wSdist}}   \to\ 
  \thgrp\bigparen{(X,\wSdist),x_0} 
  \ \to \  
  \bigparen{G_\theta\rtimes_{\bar\phi} \Gamma_\theta} \to 1 , 
 \]
 where $G_\theta$ is the quotient of $\pi_1(X,x_0)$ as defined in Proposition~\ref{prop:discr.fund.group.is.semidirect.Gamma}.
\end{cor}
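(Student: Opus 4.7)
The plan is to read off the short exact sequence directly from the intermediate construction that already appeared in the proof of Proposition~\ref{prop:discr.fund.group.is.semidirect.Gamma}, and then substitute the identification provided by Corollary~\ref{cor:discr.fund.grp.of.free.Gamma.warped.cone}. In more detail: the application of Lemma~\ref{lem:quotient.of.semidirect.prod} inside the proof of Proposition~\ref{prop:discr.fund.group.is.semidirect.Gamma} (with $G=\pi_1(X,x_0)$, $H=F_S$, $N=\aangles{R_\theta}$, and $f(r)\coloneqq[\alpha_r]$) produced an exact sequence
\[
  1 \to \aangles{[\alpha_r]\mid r\in R_\theta}_Q \to Q \xrightarrow{\bar p} G_\theta \rtimes_{\bar\phi} \Gamma_\theta \to 1,
\]
where $Q$ is the quotient of $\pi_1(X,x_0)\rtimes_{\phi_S} F_S$ by $\aangles{\{([\alpha_r^*],r)\mid r\in R_\theta\}}$. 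This derivation used only the hypotheses that $\phi_S(r)$ acts by conjugation by $[\alpha_r]$ and that $r\mapsto[\alpha_r]$ is $\phi_S$\=/equivariant, both of which still hold under the hypotheses of Corollary~\ref{cor:discr.fund.grp.of.free.Gamma.warped.cone}; so the exact sequence is available verbatim.

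Next, I would invoke Corollary~\ref{cor:discr.fund.grp.of.free.Gamma.warped.cone}, which states that for $t$ large enough $\widehat\Phi_S$ descends to an isomorphism between $Q$ and $\thgrp\bigparen{(X,\wSdist[t]),x_0}$. Pulling back the exact sequence above along this isomorphism gives the required sequence with $\thgrp\bigparen{(X,\wSdist[t]),x_0}$ in the middle position. What remains is to match the kernel term: tracking the isomorphism, the image of $[\alpha_r]\in Q$ is exactly $\widehat\Phi_S([\alpha_r],e)$, the $\theta$\=/homotopy class of the closed $\theta$\=/discretisation of $\alpha_r$ based at $x_0$, which by slight abuse we again denote $[\alpha_r]$ in $\thgrp(X,\wSdist)$. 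Consequently $\aangles{[\alpha_r]\mid r\in R_\theta}_Q$ corresponds to $\aangles{[\alpha_r]\mid r\in R_\theta}_{\thgrp(X,\wSdist)}$, yielding the sequence as stated.

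There is no real obstacle here: every ingredient was already assembled. The only point that requires a line of care is ensuring that the normal closure is computed in the correct group after transport of structure, but this is automatic since an isomorphism sends normal closures to normal closures of the image set. I would therefore present the corollary as a one\=/paragraph proof consisting of the two moves above.
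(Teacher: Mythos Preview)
Your proposal is correct and follows exactly the approach the paper takes: the paper's entire ``proof'' is the single sentence preceding the corollary, stating that it follows immediately from the proof of Proposition~\ref{prop:discr.fund.group.is.semidirect.Gamma}. You have simply unpacked this, recalling the short exact sequence produced there by Lemma~\ref{lem:quotient.of.semidirect.prod} and substituting the identification $Q\cong\thgrp\bigparen{(X,\wSdist[t]),x_0}$ from Corollary~\ref{cor:discr.fund.grp.of.free.Gamma.warped.cone}; your remark on transporting the normal closure along the isomorphism is the only point needing a word of justification, and you handle it correctly.
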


\section{Limits of discrete fundamental groups as coarse invariants for warped systems}\label{sec:limit.of.fund.grps}
A rather peculiar consequence of Theorem \ref{thm:disc.fund.group.of.warped.cones} is that the discrete fundamental group does not depend meaningfully on the parameter $t$:

\begin{cor}
 For every fixed $\theta\geq 1$, all the level sets $(X,\wSdist[t])$ with $t$ large enough have the same $\theta$\=/fundamental group.
\end{cor}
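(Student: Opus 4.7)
The plan is to invoke Theorem~\ref{thm:disc.fund.group.of.warped.cones} and then simply read off that the right-hand side of the isomorphism provided by that theorem is entirely independent of the scale parameter $t$.

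More explicitly, first I would fix $\theta$ and apply Theorem~\ref{thm:disc.fund.group.of.warped.cones} to produce a threshold $t_0 = t_0(\theta)$ (depending only on $\theta$, on $X$, on $S$ and on the action $F_S \curvearrowright X$) such that for every $t \geq t_0$,
\[
\thgrp\bigparen{(X,\wSdist[t]),x_0} \cong (\pi_1(X,x_0) \rtimes_{\phi_S} F_S) \big/ \aangles{K_\theta}.
\]
Next I would observe that every ingredient on the right-hand side is intrinsic to the pair $(X, F_S\curvearrowright X)$, the choice of paths $\alpha_s$, and the parameter $\theta$: the group $\pi_1(X,x_0)$, the action $\phi_S$, and the set $K_\theta$, which depends only on ${\rm Ell}_\theta$ (the set of elliptic elements of length at most $4\theta$) together with continuous paths in $X$ joining $x_0$ to the fixed-point sets of such elements. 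No term in the definition of $K_\theta$ or of the ambient semidirect product involves $t$. Consequently, for any two parameters $t, t' \geq t_0$, the groups $\thgrp\bigparen{(X,\wSdist[t]),x_0}$ and $\thgrp\bigparen{(X,\wSdist[t']),x_0}$ are each isomorphic to the same abstract quotient of $\pi_1(X,x_0) \rtimes_{\phi_S} F_S$, hence to each other.

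The only minor point to address is that Theorem~\ref{thm:disc.fund.group.of.warped.cones} is stated for $\theta \in \NN$ whereas the corollary allows $\theta \geq 1$ real. I would dispatch this either by noting that the proof of Theorem~\ref{thm:disc.fund.group.of.warped.cones} goes through verbatim for real $\theta \geq 1$ (integrality is used only in the decomposition of a short \jp into four pieces of length at most $\theta$, which is equally valid for real $\theta \geq 1$), or alternatively by replacing $\theta$ with $\lceil \theta \rceil$ at the very end; in either case, the conclusion that the right-hand side does not depend on $t$ is preserved.

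There is really no hard step: the entire content is that Theorem~\ref{thm:disc.fund.group.of.warped.cones} already produces a presentation of $\thgrp$ whose data does not see $t$, so the corollary is immediate. The only thing to be careful about is not to confuse the threshold $t_0$, which does depend on $\theta$, with the isomorphism type of the group, which does not depend on $t$ once $t \geq t_0$.
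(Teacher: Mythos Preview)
Your proposal is correct and matches the paper's approach exactly: the paper offers no separate proof for this corollary, treating it as an immediate consequence of Theorem~\ref{thm:disc.fund.group.of.warped.cones}, and your explanation that the right-hand side $(\pi_1(X,x_0)\rtimes_{\phi_S}F_S)/\aangles{K_\theta}$ involves no $t$-dependent data is precisely the intended reading.

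One small caveat on your handling of the $\theta\in\NN$ versus $\theta\geq 1$ discrepancy: your first proposed fix is not quite right. Integrality \emph{is} genuinely used in the decomposition step (cf.\ Lemma~\ref{lem:short.jumps.are.trivial}): for instance, if $\theta=1.9$ and $w\in{\rm Ell}_\theta$ has $\abs{w}=7\leq 4\theta$, then the closed \jp $z\jto[w^{-1}_{\rm rev}]z$ consists of seven unit-length jumps and cannot be split into four sub-\jps each of length $\leq 1.9$, since each such piece can contain at most one jump. Your second fix (pass to $\lceil\theta\rceil$) also does not directly yield the statement for the original $\theta$, only for $\lceil\theta\rceil$. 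That said, the paper itself states Theorem~\ref{thm:disc.fund.group.of.warped.cones} only for $\theta\in\NN$ and does not address this gap either, so the corollary should be read with $\theta\in\NN$ (as is done throughout the rest of Section~\ref{sec:limit.of.fund.grps}); you were right to flag the issue.
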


Also in view of the above, once an action $F_S\curvearrowright (X,d)$ is fixed, we feel invited to deal with the whole family of metric spaces $(X,\wSdist[t])$ at the same time and study its asymptotic properties. For this reason we give the following:

\begin{de}\label{def:warped.system}
 Given an action by homeomorphisms on a metric space $F_S\curvearrowright (X,d)$, its \emph{warped system} ${\rm WSys}\bigparen{F_S\curvearrowright (X,d)}$ is the data of the family of metric spaces $\bigbrace{(X,\wSdist[t])\bigmid t\in [1,\infty)}$ together with the set of generating homeomorphisms $S$.
 
 We say that a warped system \emph{satisfies a property $P$ asymptotically} if there exists a parameter $t_0$ large enough so that $(X,\wSdist[t])$ satisfies $P$ for every $t\geq t_0$. We will drop the distance $d$ from the notation if it is not relevant or it is clear from the context.
\end{de}

Using this new terminology, Theorem \ref{thm:disc.fund.group.of.warped.cones} then implies that if $X$ is compact and semi\=/locally simply connected and ${\rm WSys}\bigparen{F_S\curvearrowright X}$ is asymptotically jumping\=/geodesic then its $\theta$\=/discrete fundamental group is asymptotically isomorphic to $\bigparen{\pi_1(X,x_0)\rtimes_{\phi_S}F_S}\big/\aangles{K_\theta}$. To simplify the notation, we will denote this group by $\thgrp\bigparen{F_S\curvearrowright X}$.

\begin{convention*}
 From now on, we will always assume that ${\rm WSys}\bigparen{F_S\curvearrowright X}$ is an asymptotically jumping\=/geodesic warped system over a compact, semi\=/locally simply connected space $X$ with homotopy rectifiable paths.
\end{convention*}

Recall that from (\ref{item:lem:change.parameter}) of Lemma \ref{lem:qi.induce.maps.on.disc.fund.grp} we know that the identity map on $X$ induces a surjection $\thgrp\bigparen{F_S\curvearrowright X}\to\thgrp[\theta']\bigparen{F_S\curvearrowright X}$ for every choice of $\theta<\theta'$. That is, the family of asymptotic $\theta$\=/discrete fundamental groups forms a direct system. We can hence take the direct limit and define
\[
 \thgrp[\infty]\bigparen{F_S\curvearrowright X}\coloneqq \varinjlim\thgrp[\theta]\bigparen{F_S\curvearrowright X}.
\]

\begin{rmk}
 Note that in the definition of $\thgrp[\infty]\bigparen{F_S\curvearrowright X}$ it is important that we are taking the limit of \emph{asymptotic} discrete fundamental groups for a \emph{family} of metric spaces. Indeed $\thgrp[\infty]\bigparen{F_S\curvearrowright X}$ is \emph{not} the direct limit of the discrete fundamental groups of a single metric space, as such limit would always be trivial (every fixed $\theta$\=/path will become trivial if looked at with respect to a very large parameter $\theta'$).
 
 The situation is very different when considering the inverse limit. In fact, for a fixed metric space the inverse limit $\varprojlim\thgrp[\theta]\bigparen{(X,d),x_0}$ for $\theta\to 0$ is generally non trivial and it is often isomorphic to the fundamental group $\pi_1(X,x_0)$. This is the subject of \cite{Vig18c}.
\end{rmk}

\begin{rmk}
 Despite being inspired by it, the definition of $\thgrp[\infty]$ is quite different from the definition of \emph{coarse homology} of \cite{BCW14}.
\end{rmk}

As a consequence of Theorem~\ref{thm:disc.fund.group.of.warped.cones}, we can easily prove the following:

\begin{thm}\label{thm:coarse.fund.group.of.warped.systems}
 The group $\thgrp[\infty]\bigparen{F_S\curvearrowright X}$ is isomorphic to $\bigparen{\pi_1(X,x_0)\rtimes_{\phi_S}F_S}\big/\aangles{K_\infty}$
 where 
 \[
  K_\infty\coloneqq\bigbrace{\bigparen{[\beta w(\beta^*)\alpha_w^*],w}\bigmid \fix(w)\neq\emptyset,\ \beta(0)=x_0,\ \beta(1)\in\fix(w)}.
 \]
\end{thm}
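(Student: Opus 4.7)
The plan is to deduce the result from Theorem~\ref{thm:disc.fund.group.of.warped.cones} by passing to the direct limit. Fix the base point $x_0$ and the paths $\alpha_s$ used to construct $\widehat\Phi_S$; these choices are independent of $\theta$, which is what will make the argument go through.

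First, for every integer $\theta\geq 1$, Theorem~\ref{thm:disc.fund.group.of.warped.cones} applied to the (asymptotically jumping\=/geodesic) warped system yields a canonical isomorphism
\[
 \widehat\Phi_S^{\theta}\colon\bigparen{\pi_1(X,x_0)\rtimes_{\phi_S}F_S}\big/\aangles{K_\theta}\xrightarrow{\ \cong\ }\thgrp(F_S\curvearrowright X),
\]
where $K_\theta\subseteq K_{\theta'}$ whenever $\theta\leq\theta'$, since ${\rm Ell}_\theta\subseteq{\rm Ell}_{\theta'}$ and the definition of the pair $([\beta w(\beta^*)\alpha_w^*],w)$ does not otherwise involve $\theta$.

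Second, I would verify that these isomorphisms are compatible with the transition maps of the direct system. Concretely, Lemma~\ref{lem:qi.induce.maps.on.disc.fund.grp}(\ref{item:lem:change.parameter}) gives a surjection $(\id_X)_*\colon\thgrp(F_S\curvearrowright X)\to\thgrp[\theta'](F_S\curvearrowright X)$ for $\theta\leq\theta'$ sending a $\theta$\=/path to itself viewed as a $\theta'$\=/path. I claim the diagram
\[
\begin{tikzcd}[row sep=1.5ex, column sep=5ex]
\pi_1(X,x_0)\rtimes_{\phi_S}F_S \arrow[r,"\widehat\Phi_S^{\theta}"]\arrow[rd,"\widehat\Phi_S^{\theta'}"'] & \thgrp(F_S\curvearrowright X)\arrow[d,"{(\id_X)_*}"] \\
 & \thgrp[\theta'](F_S\curvearrowright X)
\end{tikzcd}
\]
commutes: for $([\gamma],w)\in\pi_1\rtimes F_S$, any $\theta$\=/discretisation of the \jp $\gamma\alpha_w\jto[w_{\rm rev}^{-1}]x_0$ is simultaneously a valid $\theta'$\=/discretisation of the same \jp (a subdivision fine enough for $\theta$ is fine enough for $\theta'$), and by Lemma~\ref{lem:homotopic.paths_homotopic.discretisations} any two $\theta'$\=/discretisations of the same \jp are $\theta'$\=/homotopic.

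Third, with naturality in hand, the direct limit of quotients computes as a single quotient: since $\aangles{K_\theta}\subseteq\aangles{K_{\theta'}}$ for $\theta\leq\theta'$, one has
\[
 \varinjlim_{\theta}\ \frac{\pi_1(X,x_0)\rtimes_{\phi_S}F_S}{\aangles{K_\theta}}\ \cong\ \frac{\pi_1(X,x_0)\rtimes_{\phi_S}F_S}{\aangles{\bigcup_{\theta\in\NN}K_\theta}}.
\]
It remains to identify $\bigcup_{\theta}K_\theta$ with $K_\infty$. One inclusion is immediate. For the other, given $([\beta w(\beta^*)\alpha_w^*],w)\in K_\infty$ with $\fix(w)\neq\emptyset$, the element $w$ has some finite word length $\abs{w}$ in $F_S$, so $w\in{\rm Ell}_\theta$ for every $\theta\geq\abs{w}/4$; hence the pair lies in $K_\theta$ for all sufficiently large $\theta$. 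This gives the desired isomorphism $\thgrp[\infty](F_S\curvearrowright X)\cong(\pi_1(X,x_0)\rtimes_{\phi_S}F_S)/\aangles{K_\infty}$.

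The main obstacle is the compatibility check in the second step: one must be careful that although the homomorphism $\widehat\Phi_S$ depends on $\theta$ through the discretisation procedure, the dependence is benign once one fixes a single system of paths $\alpha_s$ and uses the fact (already established in Section~\ref{sec:discretisations}) that discretisations of a given \jp at different scales are consistent up to homotopy. Beyond this, the argument is a purely formal passage to the direct limit.
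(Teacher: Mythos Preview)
Your proposal is correct and follows essentially the same approach as the paper: invoke Theorem~\ref{thm:disc.fund.group.of.warped.cones} at each scale, pass to the direct limit using the standard fact that $\varinjlim \Gamma/\Gamma_i \cong \Gamma/\bigcup_i \Gamma_i$ for nested normal subgroups, and identify $\bigcup_\theta K_\theta$ with $K_\infty$. The paper's proof is considerably terser, omitting the explicit compatibility check in your second step, but your added care there is sound and does no harm.
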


\begin{proof}
 It is a standard fact that, given a group $\Gamma$ and an inverse system of normal subgroups $\Gamma_i\lhd\Gamma$ (\emph{i.e.} such that $\Gamma_i\subset\Gamma_j$ for every $i<j$), the direct limit $\varinjlim\Gamma/\Gamma_i$ is isomorphic to $\Gamma/\Gamma_\infty$\textemdash where $\Gamma_\infty=\bigcup_i\Gamma_i$.

 Then, the proof of the theorem follows easily from Theorem~\ref{thm:disc.fund.group.of.warped.cones}, as $\aangles{K_\infty}=\bigcup_{\theta>1}\aangles{K_\theta}$. 
\end{proof}

The interest of Theorem~\ref{thm:coarse.fund.group.of.warped.systems} is that the group $\thgrp[\infty]\bigparen{F_S\curvearrowright X}$ can prove to be an interesting coarse invariant for ${\rm WSys}\bigparen{F_S\curvearrowright X}$. For this we need to give another definition:

\begin{de}
We say that the warped system ${\rm WSys}\bigparen{F_S\curvearrowright X}$ has \emph{stable discrete fundamental group} if there exists a $\theta$ large enough so that the natural surjection $\thgrp\bigparen{F_S\curvearrowright X}\to\thgrp[\infty]\bigparen{F_S\curvearrowright X}$ is an isomorphism. 
\end{de}

It is simple to prove the following:

\begin{lem}\label{lem:stable.iff.finitely.presented(free.action)}
 If ${\rm WSys}\bigparen{F_S\curvearrowright X}$ is induced by a free action of a finitely generated group $\Gamma=\angles{S\mid R}$, then it has stable discrete fundamental group if and only if $\Gamma$ is finitely presented.
\end{lem}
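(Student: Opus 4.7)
The plan is to translate the stability condition into a normal-generation condition inside $F_S$, using the explicit description of the kernel supplied by Corollary~\ref{cor:discr.fund.grp.of.free.Gamma.warped.cone}. Setting $N_\theta\coloneqq\aangles{([\alpha_r^*],r)\mid r\in R_\theta}$, that corollary gives $\thgrp(F_S\curvearrowright X)\cong(\pi_1(X)\rtimes_{\phi_S} F_S)/N_\theta$ asymptotically; passing to the direct limit produces $\thgrp[\infty](F_S\curvearrowright X)\cong(\pi_1(X)\rtimes F_S)/N_\infty$ with $N_\infty=\bigcup_\theta N_\theta$, so stability is equivalent to $N_\theta=N_\infty$ for some finite $\theta$.

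I would then unpack $N_\theta$ explicitly. Because the action is free, every $r\in\aangles{R}$ acts as the identity on $X$, so $\alpha_{r_1 r_2}=\alpha_{r_1}\alpha_{r_2}$, $\alpha_{r^{-1}}=\alpha_r^*$, and, as in the proof of Proposition~\ref{prop:discr.fund.group.is.semidirect.Gamma}, $\phi_S(w)[\alpha_r^*]=[\alpha_{wrw^{-1}}^*]$. A direct calculation in the semidirect product then yields
\[
(g,w)\,([\alpha_r^*],r)\,(g,w)^{-1}=([\alpha_{wrw^{-1}}^*],wrw^{-1})
\]
(the $g$ cancels), together with the product rule $([\alpha_a^*],a)([\alpha_b^*],b)=([\alpha_{ab}^*],ab)$ for $a,b\in\aangles{R}$. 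Consequently the set $H(T)\coloneqq\{([\alpha_r^*],r)\mid r\in T\}$ is a subgroup of $\pi_1(X)\rtimes F_S$ for any subgroup $T\leq\aangles{R}$, and is normal in $\pi_1(X)\rtimes F_S$ precisely when $T$ is normal in $F_S$. Thus $N_\theta=H(\aangles{R_\theta}_{F_S})$, $N_\infty=H(\aangles{R})$, and the identity $N_\theta=N_\infty$ is equivalent to $\aangles{R_\theta}_{F_S}=\aangles{R}$ in $F_S$.

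This reduces the lemma to a standard fact: the normal subgroup $\aangles{R}\lhd F_S$ is finitely normally generated if and only if $\Gamma=F_S/\aangles{R}$ is finitely presented. Indeed, if $\aangles{R_\theta}_{F_S}=\aangles{R}$ for some finite $\theta$, then $R_\theta$ is itself finite (there are only finitely many reduced words of length at most $4\theta$ over the finite alphabet $S\cup S^{-1}$), yielding the finite presentation $\Gamma=\langle S\mid R_\theta\rangle$. Conversely, if $\Gamma$ is finitely presented then a finite $R'\subseteq\aangles{R}$ with $\aangles{R'}_{F_S}=\aangles{R}$ exists by Tietze transformations, and any $\theta$ with $4\theta\geq\max_{r\in R'}|r|$ forces $R'\subseteq R_\theta$ and hence $\aangles{R_\theta}_{F_S}=\aangles{R}$.

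The main obstacle is the semidirect-product computation in the second paragraph: everything hinges on observing that the complicated-looking normal closure $N_\theta$ actually collapses to the clean ``level-set'' form $H(\aangles{R_\theta}_{F_S})$, thanks to the cancellation afforded by the triviality of the $\aangles{R}$-action on $X$. Once this is in place the rest is a routine translation between normal generation in $F_S$ and finite presentability of $\Gamma$.
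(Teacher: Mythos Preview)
Your proof is correct and is essentially the same as the paper's. Both arguments rest on the identical two computations in $\pi_1(X)\rtimes_{\phi_S}F_S$: the product rule $([\alpha_a^*],a)([\alpha_b^*],b)=([\alpha_{ab}^*],ab)$ for $a,b\in\aangles{R}$ and the conjugation rule $(g,w)([\alpha_r^*],r)(g,w)^{-1}=([\alpha_{wrw^{-1}}^*],wrw^{-1})$; your packaging of these as ``$r\mapsto([\alpha_r^*],r)$ is an injective homomorphism $\aangles{R}\hookrightarrow\pi_1(X)\rtimes F_S$ whose image $H(T)$ is normal iff $T\lhd F_S$'' is slightly cleaner than the paper's step-by-step verification, but the content is the same.
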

\begin{proof}
 As in Corollary \ref{cor:discr.fund.grp.of.free.Gamma.warped.cone}, we let $R_\theta$ be the (finite) subset of $\aangles{R}$ of words of length at most $4\theta$. Then, since the $\Gamma$\=/action is free and every $r\in\aangles{R}$ acts trivially, we have $K_\theta=\bigbrace{\bigparen{[\alpha_r^*],r}\bigmid r\in R_\theta}$ and $K_\infty=\bigbrace{\bigparen{[\alpha_r^*],r}\bigmid r\in \aangles{R}}$. In particular, if $\aangles{K_\theta}=\aangles{K_\infty}$ for some $\theta\in\NN$, then $\aangles{R}=\aangles{R_\theta}$ and hence $\Gamma$ is finitely presented.
 
 Vice versa, if $\Gamma$ is finitely presented then there exists a $\theta\in\NN$ such that $\aangles{R_\theta}=\aangles{R}$. Following the lines of the proof of Proposition~\ref{prop:discr.fund.group.is.semidirect.Gamma}, we claim that $K_\infty$ is contained in $\aangles{K_\theta}$. Indeed, if $([\alpha_r^*],r)$ is in $\aangles{K_\theta}$, then for every $w\in F_S$ we have
 \begin{align*}
  \paren{[\alpha_{wrw^{-1}}^*],wrw^{-1}}
    &=\bigparen{[w(r(\alpha_{w^{-1}}^*)\alpha_r^*)\alpha_{w}^*],wrw^{-1}} \\
    &=\bigparen{[\alpha_{w}w(\alpha_r^*)\alpha_{w}^*],wrw^{-1}} \\
    &=\bigparen{\phi_S(w)[\alpha_r^*],wrw^{-1}} \\
    &=(e,w)\paren{[\alpha_r^*],r}(e,w)^{-1}
 \end{align*}
and hence $\paren{[\alpha_{wrw^{-1}}^*],wrw^{-1}}$ is in $\aangles{K_\theta}$ as well. Moreover, given $([\alpha_{r_1}^*],r_1)$ and $([\alpha_{r_2}^*],r_2)$ in $\aangles{K_\theta}$ we have
 \begin{align*}
  \bigparen{[\alpha_{r_1r_2}^*],r_1r_2}
  &=\bigparen{[r_1(\alpha_{r_2}^*)\alpha_{r_1}^*],r_1r_2} \\
  &=\bigparen{[\alpha_{r_2}^*\alpha_{r_1}^*],r_1r_2} \\
  &=\bigparen{[\alpha_{r_1}^*\alpha_{r_1}r_1(\alpha_{r_2})^*\alpha_{r_1}^*],r_1r_2} \\
  &=\bigparen{[\alpha_{r_1}^*],r_1}\bigparen{[\alpha_{r_2}^*],r_2}
 \end{align*}
and the latter is in $\aangles{K_\theta}$. The claim easily follows.

\end{proof}

\begin{rmk}\label{rmk:non.fin.pres.acting}
 Lemma \ref{lem:stable.iff.finitely.presented(free.action)} is enough to show that many interesting examples of warped systems have stable discrete fundamental group. Still, it also gives us means for constructing warped system with unstable discrete fundamental group. For example, it is well\=/known that there exists a finite set $S\subset F_2\times F_2$ so that the generated subgroup $\Gamma=\angles{S}<F_2\times F_2$ is not finitely presented (see \emph{e.g.}\cite[Section III.$\Gamma$.5, Example 5.4]{BrHa13}).
 Consider now any embedding of $F_2\times F_2$ in a compact Lie group $G$ (\emph{e.g.} an embedding in $G=\SO(3,\RR)\times \SO(3,\RR)$). 
This induces an embedding $\Gamma\hookrightarrow G$ and the isometric action by left translation produces a warped system ${\rm WSys}\bigparen{\Gamma\curvearrowright G}={\rm WSys}\bigparen{F_S\curvearrowright G}$ that, by Lemma~\ref{lem:stable.iff.finitely.presented(free.action)}, does not have stable discrete fundamental group.
\end{rmk}

\begin{lem}\label{lem:coarse.fund.group.invariant.of.sequences}
 Let ${\rm WSys}\bigparen{F_S\curvearrowright X}$ be a warped system with stable discrete fundamental group and let $(Y_k)_{k\in\NN}$ be a sequence of $1$\=/geodesic metric spaces. If there exists an increasing unbounded sequence $(t_k)_{k\in\NN}$ so that the family $\bigparen{X,\wSdist[t_k]}_{k\in\NN}$ is coarsely equivalent to $(Y_k)_{k\in\NN}$, then for every $\theta$ large enough there exists an $n\in\NN$ such that 
 \[
\thgrp(Y_k)\cong\thgrp[\infty]\bigparen{F_S\curvearrowright X}   
 \]
 for every $k\geq n$.
\end{lem}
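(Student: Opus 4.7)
The plan is to apply part~(\ref{item:lem:isomorphism}) of Lemma~\ref{lem:qi.induce.maps.on.disc.fund.grp} to the uniform quasi-isometries witnessing the coarse equivalence, with parameters chosen so that the $(\id_X)_*$ comparison on the $X$-side happens inside the stable range. The only subtlety is matching the two scales demanded by that lemma to the stable range.

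First I fix uniform $(L,A)$-quasi-isometries $f_k \colon \bigparen{X,\wSdist[t_k]} \to Y_k$. The stability hypothesis provides a threshold $\theta_0$ (which I enlarge if necessary so that $\theta_0 \geq L+A$) such that the canonical surjection $\thgrp[\tilde\theta]\bigparen{F_S\acts X} \to \thgrp[\infty]\bigparen{F_S\acts X}$ is an isomorphism for every $\tilde\theta \geq \theta_0$. In particular, for any $\theta_0 \leq \tilde\theta_1 \leq \tilde\theta_2$ the natural comparison $\thgrp[\tilde\theta_1]\bigparen{F_S\acts X} \to \thgrp[\tilde\theta_2]\bigparen{F_S\acts X}$ is an isomorphism, because it is compatible with the two surjections to $\thgrp[\infty]$.

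Given any $\theta \geq L\theta_0 + A$, I set $\bar\theta := (\theta-A)/L$, so that $\bar\theta \geq \theta_0 \geq L+A$ and $L\bar\theta + A = \theta$. Theorem~\ref{thm:disc.fund.group.of.warped.cones} tells us that for $k$ large enough (depending on $\theta$ only, thanks to the uniformity of $(L,A)$), the asymptotic identifications $\thgrp[\bar\theta]\bigparen{X,\wSdist[t_k]} \cong \thgrp[\bar\theta]\bigparen{F_S\acts X}$ and $\thgrp[L\theta+A]\bigparen{X,\wSdist[t_k]} \cong \thgrp[L\theta+A]\bigparen{F_S\acts X}$ both hold; combined with the previous paragraph, this makes $(\id_X)_* \colon \thgrp[\bar\theta]\bigparen{X,\wSdist[t_k]} \to \thgrp[L\theta+A]\bigparen{X,\wSdist[t_k]}$ an isomorphism. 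Now Lemma~\ref{lem:qi.induce.maps.on.disc.fund.grp}(\ref{item:lem:isomorphism}), applied to $f_k$ with source scale $\bar\theta$ and target scale $L\bar\theta + A = \theta$, yields that $(f_k)_*\colon \thgrp[\bar\theta]\bigparen{X,\wSdist[t_k]} \to \thgrp\bigparen{Y_k}$ is an isomorphism; composing with the iso to $\thgrp[\infty]\bigparen{F_S\acts X}$ gives the claimed conclusion.

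The only real obstacle is the parameter book-keeping: the argument requires the identity comparison on the $X$-side to be an iso simultaneously at the two scales $\bar\theta$ and $L\theta+A$, and both scales must lie in the stable range. Taking $\theta \geq L\theta_0 + A$ automatically forces $\bar\theta \geq \theta_0$ and $L\theta+A \geq \theta \geq \bar\theta \geq \theta_0$, so once $\theta_0$ is chosen to dominate $L+A$ everything fits and the rigidity is then exactly the content of Lemma~\ref{lem:qi.induce.maps.on.disc.fund.grp}(\ref{item:lem:isomorphism}).
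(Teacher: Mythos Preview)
Your proof is correct and follows essentially the same route as the paper's: fix the stable threshold, push the source scale on $X$ into the stable range, invoke Theorem~\ref{thm:disc.fund.group.of.warped.cones} at both the source scale and $L\theta+A$ so that $(\id_X)_*$ is an isomorphism, and conclude via Lemma~\ref{lem:qi.induce.maps.on.disc.fund.grp}(\ref{item:lem:isomorphism}). The only cosmetic difference is that you let the source scale $\bar\theta=(\theta-A)/L$ vary with $\theta$, whereas the paper keeps it fixed at the stable threshold; both choices work, and you are a bit more explicit than the paper about the requirement $\theta_0\geq L+A$.
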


\begin{proof}
 Since $\bigparen{F_S\curvearrowright X}$ has stable fundamental group, there exists a $\bar\theta$ large enough so that $\thgrp[\theta]\bigparen{F_S\curvearrowright X}\cong\thgrp[\infty]\bigparen{F_S\curvearrowright X}$ for every $\theta\geq \bar\theta$. In particular, from Theorem~\ref{thm:disc.fund.group.of.warped.cones} it follows that for every $\theta\geq\bar\theta$ there exists $n(\theta)$ so that 
 \[
  \thgrp[\theta]\bigparen{X,\wSdist[t_k]}\cong\thgrp[\infty]\bigparen{F_S\curvearrowright X}
 \]
 for every $k\geq n(\theta)$.
 
 Now, let $L$ and $A$ be the uniform constants of the quasi\=/isometries. Then for every $k\in\NN$ and $\theta\geq L\bar\theta+A$, by Lemma \ref{lem:qi.induce.maps.on.disc.fund.grp} we have a concatenation of surjections:
 \[
  \begin{tikzcd}
  \thgrp[\bar\theta]\bigparen{X,\wSdist[t_k]} \arrow[two heads,r] 
  & \thgrp[\theta]\bigparen{Y_k} \arrow[two heads,r]
  & \thgrp[L\theta+A]\bigparen{X,\wSdist[t_k]}
  \end{tikzcd}.
 \]
 
 If $k\geq n(L\theta+A)$, it follows from the discussion above that in the following diagram the maps are isomorphisms
 \[
  \begin{tikzcd}[column sep=0 ex]
  \thgrp[\bar\theta]\bigparen{X,\wSdist[t_k]}
	\arrow{rr}{\quad \id_*\quad}\arrow[swap]{rd}{\cong}
  & & \thgrp[L\theta+A]\bigparen{X,\wSdist[t_k]} \arrow{ld}{\cong}\\
  & \thgrp[\infty]\bigparen{F_S\curvearrowright X} &
  \end{tikzcd}
 \]
 and in particular $\id_*$ is an isomorphism. Hence by (\ref{item:lem:isomorphism}) of Lemma~\ref{lem:qi.induce.maps.on.disc.fund.grp} we deduce that
 \[
 \thgrp(Y_k)\cong\thgrp[\infty]\bigparen{F_S\curvearrowright X}.
 \]
\end{proof}

The above result can be further specialised in the study of coarse equivalences of warped systems:

\begin{thm}\label{thm:coarse.fund.group.invariant.of.stable.ws}
 Let ${\rm WSys}\bigparen{F_S\curvearrowright X}$ and ${\rm WSys}\bigparen{F_T\curvearrowright Y}$ be two warped systems. If there exist increasing unbounded sequences $(t_k)_{k\in\NN}$ and $(s_k)_{k\in\NN}$ so that the families $\bigparen{X,\wSdist[t_k]}_{k\in\NN}$ and $\bigparen{X,\wTdist[s_k]}_{k\in\NN}$ are coarsely equivalent and ${\rm WSys}\bigparen{F_S\curvearrowright X}$ has stable discrete fundamental group, then also ${\rm WSys}\bigparen{F_T\curvearrowright Y}$ has stable discrete fundamental group and
 \[
\thgrp[\infty]\bigparen{F_S\curvearrowright X}\cong\thgrp[\infty]\bigparen{F_T\curvearrowright Y}.  
 \]
\end{thm}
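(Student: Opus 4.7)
The plan is to reduce the statement to Lemma~\ref{lem:coarse.fund.group.invariant.of.sequences} applied to the sequence $Y_k \coloneqq (Y, \wTdist[s_k])$, and then to upgrade the pointwise identification of discrete fundamental groups to full asymptotic stability by a diagram chase through the uniform quasi-isometries.

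First I would apply Lemma~\ref{lem:coarse.fund.group.invariant.of.sequences} to the $1$-geodesic sequence $(Y, \wTdist[s_k])_{k \in \NN}$ (which is $1$-geodesic because each level set is jumping-geodesic): this immediately yields that for every $\theta$ sufficiently large there is an index $n(\theta) \in \NN$ with $\thgrp(Y, \wTdist[s_k]) \cong \thgrp[\infty]\bigparen{F_S \curvearrowright X}$ for all $k \geq n(\theta)$. On the other hand, Theorem~\ref{thm:disc.fund.group.of.warped.cones} asserts that once $t$ exceeds a threshold $t_0(\theta)$ the group $\thgrp(Y, \wTdist[t])$ is independent of $t$ and coincides with the asymptotic group $\thgrp\bigparen{F_T \curvearrowright Y}$. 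Since $s_k \to \infty$, for every sufficiently large $\theta$ we can pick $k$ beyond both $n(\theta)$ and $t_0(\theta)$ and conclude $\thgrp\bigparen{F_T \curvearrowright Y} \cong \thgrp[\infty]\bigparen{F_S \curvearrowright X}$.

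The main obstacle will be to deduce stability from this: we must show that the canonical surjection $\thgrp\bigparen{F_T \curvearrowright Y} \twoheadrightarrow \thgrp[\theta']\bigparen{F_T \curvearrowright Y}$ is an isomorphism for all $\theta \leq \theta'$ large enough, and the mere existence of abstract isomorphisms of both sides with $\thgrp[\infty]\bigparen{F_S \curvearrowright X}$ is not sufficient (a group can non-trivially surject onto itself). To overcome this, I would fix such $\theta \leq \theta'$, let $L$ and $A$ denote uniform constants for the quasi-isometries $f_k \colon (X, \wSdist[t_k]) \to (Y, \wTdist[s_k])$ with quasi-inverses $\bar f_k$, and choose $\bar\theta$ large enough to trigger stability of ${\rm WSys}\bigparen{F_S \curvearrowright X}$ and with $L\bar\theta + A \leq \theta$, along with $\bar\theta' \geq L\theta' + A$. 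For every sufficiently large $k$, I would then consider the chain of induced surjections
\[
\thgrp[\bar\theta](X, \wSdist[t_k]) \xrightarrow{(f_k)_*} \thgrp(Y, \wTdist[s_k]) \xrightarrow{(\id_Y)_*} \thgrp[\theta'](Y, \wTdist[s_k]) \xrightarrow{(\bar f_k)_*} \thgrp[\bar\theta'](X, \wSdist[t_k]),
\]
in which all arrows are surjective by parts~(\ref{item:lem:surjective}) and~(\ref{item:lem:change.parameter}) of Lemma~\ref{lem:qi.induce.maps.on.disc.fund.grp}. By functoriality and by part~(\ref{item:lem:theta'.close}) of the same lemma, the composition coincides---up to a base-point change isomorphism---with the identity-induced surjection $\thgrp[\bar\theta](X, \wSdist[t_k]) \to \thgrp[\bar\theta'](X, \wSdist[t_k])$, which is an isomorphism by stability of ${\rm WSys}\bigparen{F_S \curvearrowright X}$. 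Since a composition of surjective group homomorphisms that happens to be an isomorphism forces every factor to be an isomorphism, the middle map $(\id_Y)_*$ is in particular an isomorphism. Theorem~\ref{thm:disc.fund.group.of.warped.cones} then identifies $(\id_Y)_*$ with the canonical surjection $\thgrp\bigparen{F_T \curvearrowright Y} \to \thgrp[\theta']\bigparen{F_T \curvearrowright Y}$, establishing stability; the isomorphism $\thgrp[\infty]\bigparen{F_T \curvearrowright Y} \cong \thgrp[\infty]\bigparen{F_S \curvearrowright X}$ is then an immediate consequence.
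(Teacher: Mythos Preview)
Your proposal is correct and follows essentially the same strategy as the paper: sandwich the identity-induced map on the $Y$-side between quasi-isometry-induced maps landing in the $X$-side, and use stability of ${\rm WSys}(F_S\curvearrowright X)$ to force every factor to be an isomorphism. The differences are purely organizational: the paper uses three parameters $\theta,\theta',\theta''$ with a zigzag $X\to Y\to X\to Y\to X$ and extracts $(\id_Y)_*$ as the composite $Y\to X\to Y$, whereas you insert $(\id_Y)_*$ explicitly with four parameters $\bar\theta,\theta,\theta',\bar\theta'$; and the paper establishes stability first and then invokes Lemma~\ref{lem:coarse.fund.group.invariant.of.sequences}, while you invoke the lemma first (making your opening paragraph technically redundant, as you yourself note at the end).
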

\begin{proof}
 Let $L$ and $A$ be the quasi\=/isometry constants of the coarse equivalence and fix three parameters $\theta,\theta'$ and $\theta''$ satisfying $\theta'\geq L\theta+A$ and $\theta''\geq L(L\theta'+A)+A$. For every $k\in\NN$, the quasi\=/isometries induce a concatenation of surjections
 \[
  \begin{tikzcd}[column sep=-2 em, row sep=2.5 em]
  \thgrp[\theta]\bigparen{X,\wSdist[t_k]}\arrow[two heads,rd]  
  && \thgrp[L\theta'+A]\bigparen{X,\wSdist[t_k]}\arrow[two heads,rd]
  && \thgrp[L\theta''+A]\bigparen{X,\wSdist[t_k]} \\
  &\thgrp[\theta']\bigparen{Y,\wTdist[s_k]} \arrow[two heads,ru]
  && \thgrp[\theta'']\bigparen{Y,\wTdist[s_k]} \arrow[two heads,ur]
  &
  \end{tikzcd}.
 \]
 If $\theta$ is large enough so that the projection $\thgrp\bigparen{F_S\curvearrowright X}\to\thgrp[\infty]\bigparen{F_S\curvearrowright X}$ is an isomorphism, then we can argue as in the proof of Lemma \ref{lem:coarse.fund.group.invariant.of.sequences} to deduce from Lemma~\ref{lem:qi.induce.maps.on.disc.fund.grp} and Theorem~\ref{thm:disc.fund.group.of.warped.cones} that for every $k$ large enough all the surjections above are actually isomorphisms.
 
 Since the composition map
 \[
  \begin{tikzcd}[column sep=-2 em, row sep=2.5 em]
  & \thgrp[L\theta'+A]\bigparen{X,\wSdist[t_k]}\arrow{rd}{\cong} &  \\
  \thgrp[\theta']\bigparen{Y,\wTdist[s_k]} \arrow{ru}{\cong} \arrow[dashed, rr] 
  && \thgrp[\theta'']\bigparen{Y,\wTdist[s_k]}
  \end{tikzcd}
 \]
 is induced by a map that is $A$\=/close to the identity, we deduce that for every $k$ large enough $(\id_Y)_*\colon\thgrp[\theta']\bigparen{Y,\wTdist[s_k]}\to\thgrp[\theta'']\bigparen{Y,\wTdist[s_k]}$ is an isomorphism. From this it follows that ${\rm WSys}\bigparen{F_T\curvearrowright Y}$ also has stable discrete fundamental group. 
 
 Now, the fact that $\thgrp[\infty]\bigparen{F_S\curvearrowright X}$ is isomorphic to $\thgrp[\infty]\bigparen{F_T\curvearrowright Y}$ follows trivially from Lemma~\ref{lem:coarse.fund.group.invariant.of.sequences}.
\end{proof}

\begin{cor}
 Let $\Gamma=\angles{S\mid R}$ be a finitely presented group and $\Lambda=\angles{T}$ be finitely generated. If there are free actions $\Gamma\curvearrowright X$ and $\Lambda\curvearrowright Y$ where $\pi_1(X)=\pi_1(Y)=\{0\}$ so that the induced warped systems ${\rm WSys}\bigparen{F_S\curvearrowright X}$ and ${\rm WSys}\bigparen{F_T\curvearrowright Y}$ admit coarsely equivalent unbounded subsequences, then $\Lambda$ is also finitely presented and $\Gamma\cong\Lambda$.
\end{cor}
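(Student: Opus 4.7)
The plan is to chain together the two main structural results already established for warped systems: the coarse invariance of the stable discrete fundamental group (Theorem~\ref{thm:coarse.fund.group.invariant.of.stable.ws}), and the explicit description of $\thgrp[\infty]$ from Theorem~\ref{thm:coarse.fund.group.of.warped.systems}. The hypotheses on $X$ and $Y$ (compact, semi-locally simply connected, path-connected with homotopy rectifiable paths, asymptotically jumping-geodesic) are covered by the standing convention of Section~\ref{sec:limit.of.fund.grps}, so I can invoke these results directly.

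First I would establish that both warped systems have stable discrete fundamental group and that their stable groups are isomorphic. Since $\Gamma$ is finitely presented and acts freely on $X$, Lemma~\ref{lem:stable.iff.finitely.presented(free.action)} tells us that ${\rm WSys}(F_S\curvearrowright X)$ has stable discrete fundamental group. Theorem~\ref{thm:coarse.fund.group.invariant.of.stable.ws}, applied to the two coarsely equivalent unbounded subsequences of level sets, then yields that ${\rm WSys}(F_T\curvearrowright Y)$ also has stable discrete fundamental group and that
\[
 \thgrp[\infty]\bigparen{F_S\curvearrowright X}\cong\thgrp[\infty]\bigparen{F_T\curvearrowright Y}.
\]
Applying Lemma~\ref{lem:stable.iff.finitely.presented(free.action)} in the other direction\textemdash to the free action $\Lambda\curvearrowright Y$\textemdash already forces $\Lambda$ to be finitely presented, which is half of the conclusion.

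Second, I would compute the two stable groups explicitly via Theorem~\ref{thm:coarse.fund.group.of.warped.systems}. For the free action $\Gamma=\angles{S\mid R}\curvearrowright X$, an element $w\in F_S$ has a fixed point on $X$ if and only if $w$ maps to the identity in $\Gamma$, i.e.\ $w\in\aangles{R}$. For such $w$, the inner factor $\beta w(\beta^*)$ appearing in the definition of $K_\infty$ collapses to the null-homotopic loop $\beta\beta^*$, and since $\pi_1(X)=\{0\}$ we also have $[\alpha_w^*]=0$. Hence $K_\infty=\{(e,w)\mid w\in\aangles{R}\}$ and
\[
 \thgrp[\infty]\bigparen{F_S\curvearrowright X}
 \cong \bigparen{\pi_1(X)\rtimes_{\phi_S}F_S}\big/\aangles{K_\infty}
 \cong F_S/\aangles{R}\cong \Gamma.
\]
The identical computation applied to $F_T\curvearrowright Y$ yields $\thgrp[\infty](F_T\curvearrowright Y)\cong\Lambda$ (here we also use that $\Lambda$ is now known to be finitely presented, so that Lemma~\ref{lem:stable.iff.finitely.presented(free.action)} confirms stability and the expression is genuinely $\Lambda$ rather than a finitely presented surrogate). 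Combining with the isomorphism produced in the first step gives $\Gamma\cong\Lambda$.

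I do not expect any serious obstacle: every step reduces to either invoking a result already proved in the paper or to a short manipulation of the explicit formula for $K_\infty$ under the simplifying hypotheses \emph{freeness of the action} and \emph{triviality of $\pi_1$}. The only minor point that deserves care is that Theorem~\ref{thm:coarse.fund.group.invariant.of.stable.ws} must be applied to unbounded subsequences $(t_k)$, $(s_k)$ of level parameters rather than to a uniform coarse equivalence of the full warped systems; this is precisely the way that theorem is stated, so the translation is immediate.
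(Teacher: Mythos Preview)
Your proof is correct and follows essentially the same approach as the paper: use Lemma~\ref{lem:stable.iff.finitely.presented(free.action)} and Theorem~\ref{thm:coarse.fund.group.invariant.of.stable.ws} to propagate stability and match the $\thgrp[\infty]$'s, then compute each side explicitly. The only cosmetic difference is that the paper cites Corollary~\ref{cor:discr.fund.grp.of.free.Gamma.warped.cone} for the final identification $\thgrp[\infty]\cong\Gamma$ (resp.\ $\Lambda$), whereas you unpack $K_\infty$ directly from Theorem~\ref{thm:coarse.fund.group.of.warped.systems}; under the hypotheses $\pi_1=\{0\}$ and freeness these are the same computation.
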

\begin{proof}
 By Lemma~\ref{lem:stable.iff.finitely.presented(free.action)} the warped system ${\rm WSys}\bigparen{F_S\curvearrowright X}$ has stable discrete fundamental group and hence by Theorem~\ref{thm:coarse.fund.group.invariant.of.stable.ws} the same is true for ${\rm WSys}\bigparen{F_T\curvearrowright Y}$ and $\thgrp[\infty]\bigparen{F_S\curvearrowright X}$ is isomorphic to $\thgrp[\infty]\bigparen{F_T\curvearrowright Y}$. Again by Lemma~\ref{lem:stable.iff.finitely.presented(free.action)} we deduce that $\Lambda$ is finitely presented, and by Corollary~\ref{cor:discr.fund.grp.of.free.Gamma.warped.cone} we deduce that $\Gamma\cong\Lambda$.
\end{proof}

\begin{rmk}
 It is not clear to the author whether the group $\thgrp[\infty]\bigparen{F_S\curvearrowright X}$ is a coarse invariant of warped systems that do not have stable discrete fundamental group.
\end{rmk}

\section{Warped cones and box spaces}\label{sec:wc.and.box}

Also in this section we have the standing assumption that warped systems are locally jumping\=/geodesic and come from actions on compact semi\=/locally simply connected metric spaces with homotopy rectifiable paths. Let $\Lambda=\angles{T\mid R}$ be a (not necessarily finite) presentation of a finitely generated infinite group and, as before, let $R_\theta$ be the subset of $\aangles{R}$ of words of length at most $4\theta$ and let $\Lambda_\theta$ be the finitely presented group $\angles{T\mid R_\theta}$. Note that we have a natural surjection $\Lambda_\theta\to\Lambda$.

The next result is basically a rewriting of \cite[Lemma 3.4]{DeKh18}. We include a sketch of a proof for the convenience of the reader, as the statement we give is slightly more precise and general than what is proved by Delabie\textendash Khukhro.

\begin{thm}[\cite{DeKh18}]\label{thm:DeKh}
 Given $\theta\in\NN$ and a normal subgroup $N\lhd\Lambda$ such that the word length $\abs{g}$ is at least $4\theta$ for every $g\in N$, the $\theta$\=/discrete fundamental group of the Cayley graph of the quotient $\Lambda/N$ is given by
 \[
  \thgrp\bigparen{{\rm Cay}(\Lambda/N,T)}\cong N_\theta,
 \]
 where $N_\theta$ is the preimage of $N$ in $\Lambda_\theta$.
 
 In particular, if $\Lambda$ is finitely presented and $N_{k+1}\lhd N_k$ is a residual filtration of $\Lambda$, for every $\theta\gg 0$ there exists an $n\in\NN$ large enough so that $\thgrp\bigparen{{\rm Cay}(\Lambda/N_k,T)}\cong N_k$ for every $k\geq n$.
\end{thm}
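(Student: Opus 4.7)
The plan is to realise $\cay(\Lambda/N,T)$ as the warped space $(X,\wTdist)$, where $X$ is the underlying metric graph and $F_T$ acts on $X$ by left translation; then to unpack Theorems~\ref{thm:discrete.fund.group_iso_quotient.of.jump.fund.grp} and \ref{thm:jump.fund.grp_iso_semidirect.prod} in this concrete setting. Since $X$ is a $1$-dimensional CW complex it is locally path-connected, semi-locally simply connected, has homotopy rectifiable paths, and $(X,\wTdist)$ is jumping-geodesic; moreover, because each $t\in T$ moves every vertex by exactly one edge, the warped metric coincides with the standard graph metric. By covering space theory, $\pi_1(X,eN)\cong\pi^{-1}(N)\subseteq F_T$, where $\pi\colon F_T\twoheadrightarrow\Lambda/N$ is the natural projection (the composition of $F_T\twoheadrightarrow\Lambda$ with the quotient by $N$).

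Choosing each path $\alpha_t$ to be the unique edge from $eN$ to $tN$, Theorem~\ref{thm:jump.fund.grp_iso_semidirect.prod} yields $\jpgrp(X,eN)\cong\pi^{-1}(N)\rtimes_{\phi_T}F_T$ and Theorem~\ref{thm:discrete.fund.group_iso_quotient.of.jump.fund.grp} then gives
\[
\thgrp(X,eN)\cong\bigparen{\pi^{-1}(N)\rtimes_{\phi_T}F_T}\big/\angles{{\rm FT}_\theta}.
\]
To simplify this quotient I would first note that for each $t\in T$ the element $\psi_T(t)=[\alpha_t\jto[t^{-1}]eN]$ is a closed jumping path of total length $2\leq 4\theta$; hence $\psi_T(t)\in{\rm FT}_\theta$ and so $\angles{{\rm FT}_\theta}$ already contains the entire ``free-group factor'' $\psi_T(F_T)$, collapsing the semidirect product to its $\pi^{-1}(N)$-component in the quotient. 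Next, using Lemma~\ref{lem:paths.and.jumps.commute} any closed jumping path of length $\leq 4\theta$ in $X$ can be rewritten as an edge-loop, i.e., as a word $w'\in\pi^{-1}(N)$ with $|w'|_{F_T}\leq 4\theta$. The length hypothesis on $N$ forces $\pi_\Lambda(w')=e$ in $\Lambda$, so $w'\in\aangles{R}$, and combined with $|w'|\leq 4\theta$ this yields $w'\in R_\theta$; free homotopy further produces all $F_T$-conjugates of $R_\theta$.

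Combining both observations, the quotient becomes $\pi^{-1}(N)/\aangles{R_\theta}_{F_T}$, which is precisely the image of $\pi^{-1}(N)\subseteq F_T$ inside $\Lambda_\theta=F_T/\aangles{R_\theta}$, hence equal to $N_\theta$. This proves the first claim. For the second, if $\Lambda=\angles{T\mid R_0}$ is finitely presented then taking $\theta$ large enough so that $R_0\subseteq R_\theta$ gives $\aangles{R_\theta}=\aangles{R}$, $\Lambda_\theta=\Lambda$ and $N_{k,\theta}=N_k$. The residual filtration condition $\bigcap_k N_k=\{e\}$ combined with the finiteness of $\{g\in\Lambda\colon|g|\leq 4\theta\}$ ensures that for all sufficiently large $k$ no nontrivial element of $N_k$ has length $\leq 4\theta$, so the first part applies.

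The main obstacle will be to verify rigorously that $\angles{{\rm FT}_\theta}$ is exactly the subgroup described above; in particular, one must check that free homotopy of closed jumping paths realises conjugation by arbitrary elements of $F_T$ (and not merely by elements of $\pi^{-1}(N)$), so that the resulting quotient involves the normal closure of $R_\theta$ in $F_T$ rather than in the smaller subgroup $\pi^{-1}(N)$.
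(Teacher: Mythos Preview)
There is a genuine error in your setup. With $F_T$ acting by \emph{left} translation on $X=\cay(\Lambda/N,T)$, the generator $t\in T$ sends a vertex $g$ to $tg$, and the graph distance is $d(g,tg)=|g^{-1}tg|_T$, which is \emph{not} $1$ in general (e.g.\ take $\Lambda/N=S_3$, $T=\{(12),(123)\}$, $g=(13)$, $t=(12)$: then $g^{-1}tg=(23)$ has length $2$). Consequently the warped metric $\wTdist$ is strictly smaller than the graph metric $d$, and your computation would yield $\thgrp\bigparen{(X,\wTdist)}$ rather than the intended $\thgrp\bigparen{(X,d)}=\thgrp\bigparen{\cay(\Lambda/N,T)}$. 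Switching to right translation does make each $t$ move every vertex along one edge, but right translation does not extend to a homeomorphism of the metric graph (the edge $(g,gt')$ would have to go to $(gt,gt't)$, which is not an edge unless $t^{-1}t't\in T$), so you cannot warp by it either.

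The paper sidesteps this entirely by invoking Theorem~\ref{thm:discrete.fund.group_iso_quotient.of.jump.fund.grp} with the \emph{trivial} action of the trivial group. Then $\jpgrp(X)=\pi_1(X)$, there is no semidirect $F_T$ factor to collapse, and one gets directly
\[
\thgrp(X)\cong\pi_1(X)/\angles{{\rm FT}_\theta}\cong p^{-1}(N_\theta)/\angles{{\rm FT}_\theta}.
\]
From here the rest of your outline is essentially correct and matches the paper: a short closed loop based at an arbitrary vertex $v$ is an edge-word $w'\in\pi^{-1}(N)$ with $|w'|\leq 4\theta$; moving the basepoint back to $eN$ along a path labelled by any $u\in F_T$ with image $v$ in $\Lambda/N$ gives the element $uw'u^{-1}$, so free homotopy does realise full $F_T$-conjugation (this resolves your ``main obstacle''); the length hypothesis forces the image of $w'$ in $\Lambda$ to be trivial, whence $w'\in\aangles{R}$; and then $|w'|\leq 4\theta$ places $w'\in R_\theta$ by definition. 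Thus $\angles{{\rm FT}_\theta}=\aangles{R_\theta}_{F_T}$ and the quotient is $N_\theta$.
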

\begin{proof}[Sketch of proof]
 The Cayley graph ${\rm Cay}(\Lambda/N,T)$ is isometric to the Cayley graph ${\rm Cay}(\Lambda_\theta/N_\theta,T)$. This can be made into a geodesic metric space by gluing in an interval $[0,1]$ for every edge of the graph. The set of homotopy classes of closed paths in ${\rm Cay}(\Lambda_\theta/N_\theta,T)$ is in bijective correspondence with the set of (reduced) words in $F_T$ that represent elements in $p^{-1}(N_\theta)\subseteq F_T$ (here $p$ denotes the surjection $F_T\to\Lambda_\theta$).
 
 We can apply Theorem~\ref{thm:discrete.fund.group_iso_quotient.of.jump.fund.grp} to the space ${\rm Cay}(\Lambda_\theta/N_\theta,T)$ to deduce that
 \[
  \thgrp\bigparen{ {\rm Cay}(\Lambda_\theta/N_\theta,T)}\cong 
  \frac{\pi_1\bigparen{{\rm Cay}(\Lambda_\theta/N_\theta,T)} }{ \angles{\rm FT_\theta} }
  \cong\frac{p^{-1}(N_\theta) }{ \angles{\rm FT_\theta}}
 \]
 (to apply Theorem~\ref{thm:discrete.fund.group_iso_quotient.of.jump.fund.grp} to metric spaces it is enough to consider the trivial action of the trivial group, so that the contributions of the `jumps' in the jumping\=/fundamental group is trivial).  
 
 By definition, $R_\theta$ is contained in $\rm FT_\theta$. Since $p^{-1}(N_\theta)/\aangles{R_\theta}=N_\theta\subseteq\Lambda_\theta$, we have
 \[
  \thgrp\bigparen{ {\rm Cay}(\Lambda_\theta/N_\theta,T)}
  \cong \frac{ N_\theta }{\angles{ {\rm FT}_\theta\cap N_\theta} }
 \]
 and the latter is simply equal to $N_\theta$ as the intersection $\rm FT_\theta\cap N_\theta$ is trivial by the assumption on the word length of the elements of $N$.
 
 If $\Lambda$ is finitely generated, there exists $\theta\gg 0 $ such that $\aangles{R_\theta}=\aangles{R}$. Moreover if $N_k$ is a residual sequence then all the elements of $N_k$ will have word length at least $4\theta$ for every $k$ large enough. The statement now easily follows from the above.
\end{proof}

We can now prove the following:

\begin{thm}\label{thm:box.space.qi.warpcone.constant.groups}
 Let $\Lambda=\angles{S\mid R}$ be an infinite finitely generated group. If a box space $\Box_{(N_k)}\Lambda$ of is coarsely equivalent to a subsequence $\bigparen{X,\wSdist[t_k]}_{k\in\NN}$ of a warped system ${\rm WSys}\bigparen{F_S\curvearrowright X}$, then ${\rm WSys}\bigparen{F_S\curvearrowright X}$ has stable discrete fundamental group if and only if $\Lambda$ is finitely presented. 
 
 Moreover, if $\Lambda$ is finitely presented then $N_k\cong \thgrp[\infty]\bigparen{F_S\curvearrowright X}$ for every $k$ large enough.
\end{thm}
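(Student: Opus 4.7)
The plan is to bridge the Cayley graphs $Y_k \coloneqq {\rm Cay}(\Lambda/N_k, S)$ and the level sets $(X, \wSdist[t_k])$ via the hypothesised quasi-isometry, using Theorem~\ref{thm:DeKh} on the box-space side and Lemma~\ref{lem:coarse.fund.group.invariant.of.sequences} on the warped-system side. The reusable tool will be the sandwich of surjections supplied by Lemma~\ref{lem:qi.induce.maps.on.disc.fund.grp}: writing $(L,A)$ for uniform quasi-isometry constants and $f_k \colon Y_k \to (X, \wSdist[t_k])$, $\bar f_k$ for a quasi-inverse, for every $\theta$ large enough one gets surjections
\[
(\bar f_k)_*\colon \thgrp(X,\wSdist[t_k]) \twoheadrightarrow \thgrp[L\theta+A](Y_k),\qquad (f_k)_*\colon\thgrp(Y_k)\twoheadrightarrow\thgrp[L\theta+A](X,\wSdist[t_k]),
\]
whose compositions in either order are close to the appropriate identity by part~(i) of that lemma.

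Suppose first that $\Lambda$ is finitely presented. Fix $\theta_0$ large enough that $\aangles{R_{\theta_0}}=\aangles{R}$, so $\Lambda_\theta=\Lambda$ and $(N_k)_\theta=N_k$ naturally for every $\theta\geq\theta_0$. Theorem~\ref{thm:DeKh} then gives $\thgrp(Y_k)\cong N_k$ for every $\theta\geq\theta_0$ and every $k$ large, and the naturality of that identification makes $(\id_{Y_k})_*\colon\thgrp(Y_k)\to\thgrp[\theta'](Y_k)$ the identity on $N_k$. Plugged into the sandwich $N_k\twoheadrightarrow\thgrp[L\theta+A](X,\wSdist[t_k])\twoheadrightarrow N_k$, this forces the middle term to be $\cong N_k$ and both surjections to be isomorphisms. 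As the conclusion does not depend on $\theta$, the warped system has stable discrete fundamental group and $\thgrp[\infty](F_S\curvearrowright X)\cong N_k$ for $k$ large.

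For the converse, assume the warped system has stable discrete fundamental group $\Gamma_\infty$. The sandwich centred on $(X,\wSdist[t_k])$ shows that $(f_k)_*\circ(\bar f_k)_*$ is close to $(\id_X)_*$, which is an isomorphism for $\theta$ large by stability; hence each $(\bar f_k)_*\colon\thgrp(X,\wSdist[t_k])\to\thgrp[L\theta+A](Y_k)$ is itself an isomorphism, being at once surjective and injective. The naturality square
\begin{equation*}
\begin{tikzcd}
\thgrp[\theta_1](X,\wSdist[t_k])\arrow[r, "(\bar f_k)_*"]\arrow[d, "(\id_X)_*"'] & \thgrp[L\theta_1+A](Y_k)\arrow[d, "(\id_{Y_k})_*"]\\
\thgrp[\theta_2](X,\wSdist[t_k])\arrow[r, "(\bar f_k)_*"'] & \thgrp[L\theta_2+A](Y_k)
\end{tikzcd}
\end{equation*}
then forces the right vertical to be an isomorphism for $\theta_1\leq\theta_2$ both sufficiently large. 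Under Theorem~\ref{thm:DeKh}, this right vertical is the natural surjection $(N_k)_{L\theta_1+A}\twoheadrightarrow (N_k)_{L\theta_2+A}$, whose kernel is $\aangles{R_{L\theta_2+A}}/\aangles{R_{L\theta_1+A}}$; triviality of that kernel gives $\aangles{R_{L\theta_1+A}}=\aangles{R_{L\theta_2+A}}$, and letting $\theta_2\to\infty$ yields $\aangles{R_{L\theta_1+A}}=\aangles{R}$, so $\Lambda$ is finitely presented. The main technical obstacle will be verifying that the isomorphism of Theorem~\ref{thm:DeKh} is genuinely natural with respect to the $(\id)_*$ maps, so that the abstract isomorphism between discrete fundamental groups really does translate to the concrete surjection between the preimages $(N_k)_\theta$; once this naturality is in place, the rest is bookkeeping of the parameter ranges dictated by the quasi-isometry constants.
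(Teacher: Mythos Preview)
Your proposal is correct and follows essentially the same approach as the paper's proof: both rely on the zig-zag of surjections coming from Lemma~\ref{lem:qi.induce.maps.on.disc.fund.grp}, Theorem~\ref{thm:DeKh} on the box-space side, and the stabilisation results on the warped-system side. You prove the two directions in the opposite order (starting with ``$\Lambda$ finitely presented $\Rightarrow$ stable''), but the arguments line up. Your identification of the naturality of the Delabie--Khukhro isomorphism with respect to $(\id)_*$ as the crux is exactly right; the paper handles it by noting that, from the \emph{proof} of Theorem~\ref{thm:DeKh}, the map $\id_*$ corresponds to the quotient $(N_k)_{\theta'}\to(N_k)_{\theta''}$ induced by $\Lambda_{\theta'}\to\Lambda_{\theta''}$.

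Two minor points worth tightening. First, in the forward direction, the sentence ``As the conclusion does not depend on $\theta$, the warped system has stable discrete fundamental group'' skips a step: knowing that $\thgrp[L\theta+A](X,\wSdist[t_k])\cong N_k$ for every large $\theta$ does not by itself say that the connecting maps $(\id_X)_*$ are isomorphisms. You need the same naturality square you invoke in the converse, with the roles of $X$ and $Y_k$ swapped. Second, in the converse, ``letting $\theta_2\to\infty$'' must be read carefully: the threshold for $\theta_1$ is fixed (coming from stability), while for each $\theta_2$ one chooses $k$ afresh; since the kernel $\aangles{R_{L\theta_2+A}}/\aangles{R_{L\theta_1+A}}$ does not depend on $k$, this is legitimate. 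The paper phrases this step as a contradiction argument (pick $r\in\aangles{R}\smallsetminus\aangles{R_{\theta'}}$, then choose $\theta''>\abs{r}/4$, then $k$), which amounts to the same thing.
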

\begin{proof}
 Note that since $\Lambda$ is infinite the sequence $t_k$ must be unbounded. Now the proof follows closely the proof of Theorem~\ref{thm:coarse.fund.group.invariant.of.stable.ws}: assume that ${\rm WSys}\bigparen{F_S\curvearrowright X}$ has stable discrete fundamental group, let $L$ and $A$ be the quasi\=/isometry constants of the coarse equivalence and let $\theta,\theta',\ \theta''$ satisfy $\theta'\geq L\theta+A$ and $\theta''\geq L(L\theta'+A)+A$. For every $k\in\NN$, the quasi\=/isometries induce a concatenation of surjections
 \[
  \begin{tikzcd}[column sep=-3 em, row sep=2.5 em]
  \thgrp[\theta]\bigparen{X,\wSdist[t_k]}\arrow[two heads,rd]  
  && \thgrp[L\theta'+A]\bigparen{X,\wSdist[t_k]}\arrow[two heads,rd]
  && \thgrp[L\theta''+A]\bigparen{X,\wSdist[t_k]} \\
  &\thgrp[\theta']\bigparen{{\rm Cay}(\Lambda/N_k,T)} \arrow[two heads,ru]
  && \thgrp[\theta'']\bigparen{{\rm Cay}(\Lambda/N_k,T)} \arrow[two heads,ur]
  &
  \end{tikzcd}
 \]
 and just as in Theorem~\ref{thm:coarse.fund.group.invariant.of.stable.ws} we can deduce that if $\theta$ is large enough so that the $\theta$\=/discrete fundamental group of ${\rm WSys}\bigparen{F_S\curvearrowright X}$ stabilised, then for every $k$ large enough $\id_*\colon\thgrp[\theta']\bigparen{{\rm Cay}(\Lambda/N_k,T)}\to\thgrp[\theta'']\bigparen{{\rm Cay}(\Lambda/N_k,T)}$ is an isomorphism. 
 
 From Theorem~\ref{thm:DeKh} we know that $\thgrp[\theta']\bigparen{{\rm Cay}(\Lambda/N_k,T)}\cong (N_k)_{\theta'}<\Lambda_{\theta'}$ and from its proof it also follows that the map $\id_*$ coincides with the quotient $(N_k)_{\theta'}\to(N_k)_{\theta''}$ induced from $\Lambda_{\theta'}\to\Lambda_{\theta''}$.
 
 Now, if $\aangles{R}$ was strictly larger than $\aangles{R_{\theta'}}$ we could choose a relation $r\in\aangles{R}\smallsetminus\aangles{R_{\theta'}}$. Choosing a $\theta''$ larger than $\abs{r}/4$, we would find that $r$ denotes an element in the kernel of $\id_*\colon (N_k)_{\theta'}\to (N_k)_{\theta''}$ for every $k$ large enough. Still, since the sequence $N_k$ is residual, there must be a $k$ large enough so that $r$ is not trivial in $(N_k)_{\theta'}$, and this contradicts the fact that $\id_*$ is an isomorphism. 
 
 Since $R_\theta$ is finite, we deduce that if ${\rm WSys}\bigparen{F_S\curvearrowright X}$ has stable discrete fundamental group then $\Lambda$ is finitely presented. The inverse implication is analogous. 
 
 The `moreover' part of the statement follows immediately from Theorem~\ref{thm:DeKh} and Lemma~\ref{lem:coarse.fund.group.invariant.of.sequences}.
\end{proof}

\begin{rmk}
 In the proof of Theorem~\ref{thm:box.space.qi.warpcone.constant.groups} we never used the fact that the residual sequence $N_k\lhd\Lambda$ consists of nested subgroups. Indeed, we only need that for every $\theta\in\NN$ there exists an $n$ large enough so that $N_k$ consists only of elements of length at least $4\theta$ for every $k\geq n$. 
\end{rmk}

Theorem~\ref{thm:box.space.qi.warpcone.constant.groups} implies that box spaces and warped systems tend to have very different coarse geometry. We wish to give some examples of such differences in the next two subsections. In what follows we will say (with an abuse of notation) that a box space is coarsely equivalent to a warped system if it is coarsely equivalent to an unbounded sequence of spaces in a warped system.

\subsection{Box spaces that are not coarsely-equivalent to warped systems} It follows from Theorem~\ref{thm:box.space.qi.warpcone.constant.groups} that for a box space of a finitely presented group $\Box_{(N_k)}\Lambda$ to be coarsely equivalent to a warped system it is necessary that the groups $N_k$ be all isomorphic for $k$ large enough. 

Note that if $\Lambda\cong F_n$ is a free group, then the rank of the normal subgroup $N_k$ is known to be ${\rm rk}(N_k)=(n-1)[F_n:N_k]+1$ and hence a bound on the rank of $N_k$ implies a bound on the index $[F_n:N_k]$. It follows that no box space of a free group can be coarsely equivalent to a warped system.

More in general, recall that the \emph{rank gradient} of a residual filtration is defined as 
\[
{\rm RG}(\Lambda,(N_k))\coloneqq \lim_{k\to\infty}\frac{{\rm rk}(N_k)}{[\Lambda:N_k]}.
\]
Recall also that if $\Lambda$ has \emph{fixed price} $p$ (for a definition and discussion see \emph{e.g.} \cite{Fur11}), then every residual filtration has rank gradient $p-1$.

\begin{cor}
 If $\Lambda$ is finitely presented and $\Box_{(N_k)}\Lambda$ is coarsely equivalent to a warped system then ${\rm RG}(\Lambda,(N_k))=0$. In particular, if $\Lambda$ has fixed price $p>1$, then no box space of $\Lambda$ is coarsely equivalent to a warped system.
\end{cor}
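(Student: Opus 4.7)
The plan is to apply Theorem~\ref{thm:box.space.qi.warpcone.constant.groups} and convert the conclusion that the subgroups $N_k$ are eventually isomorphic into a vanishing rank-gradient statement. Suppose that $\Lambda$ is finitely presented and that $\Box_{(N_k)}\Lambda$ is coarsely equivalent to an unbounded subsequence $\bigparen{X,\wSdist[t_k]}_{k\in\NN}$ of a warped system ${\rm WSys}\bigparen{F_S\curvearrowright X}$. Then Theorem~\ref{thm:box.space.qi.warpcone.constant.groups} yields simultaneously that ${\rm WSys}\bigparen{F_S\curvearrowright X}$ has stable discrete fundamental group and that
\[
 N_k\ \cong\ \thgrp[\infty]\bigparen{F_S\curvearrowright X}
\]
for every $k$ sufficiently large. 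In particular, all sufficiently large $N_k$ are abstractly isomorphic to a single fixed finitely generated group, so there exists a constant $C$ with $\rk(N_k)\leq C$ for every $k\gg 0$.

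Next, because $(N_k)$ is a residual filtration of the infinite group $\Lambda$ and each $N_k$ has finite index in $\Lambda$, the indices $[\Lambda:N_k]$ must tend to infinity as $k\to\infty$. Combining this with the uniform bound on ranks gives
\[
 {\rm RG}(\Lambda,(N_k))\;=\;\lim_{k\to\infty}\frac{\rk(N_k)}{[\Lambda:N_k]}\;\leq\;\lim_{k\to\infty}\frac{C}{[\Lambda:N_k]}\;=\;0,
\]
which establishes the first assertion of the corollary.

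The second assertion is then immediate: if $\Lambda$ has fixed price $p>1$, then by the recalled fact every residual filtration $(N_k)$ of $\Lambda$ satisfies ${\rm RG}\bigparen{\Lambda,(N_k)}=p-1>0$, contradicting the vanishing of the rank gradient proved above. The only piece of book-keeping worth mentioning is that the rank-gradient step was carried out under the standing hypothesis that $\Lambda$ is finitely presented, so one should either restrict the fixed-price statement to finitely presented $\Lambda$ or, alternatively, note that in the non\=/finitely\=/presented case Theorem~\ref{thm:box.space.qi.warpcone.constant.groups} already forces the warped system to have unstable discrete fundamental group and one can argue from there. I do not anticipate any real obstacle in the argument: once Theorem~\ref{thm:box.space.qi.warpcone.constant.groups} is in hand, the whole deduction reduces to the elementary observation that a bounded numerator over an unbounded denominator has limit zero.
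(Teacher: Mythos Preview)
Your argument is correct and matches the paper's approach: Theorem~\ref{thm:box.space.qi.warpcone.constant.groups} forces the $N_k$ to be eventually isomorphic, hence of bounded rank, while residuality in an infinite group forces the indices to diverge, giving rank gradient zero. Your closing caveat is harmless but unnecessary, since the ``in particular'' clause is meant to be read under the standing hypothesis that $\Lambda$ is finitely presented.
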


A quite different reason for box spaces to not be coarsely equivalent to warped systems goes as follows. Let $\Lambda$ be a lattice in a simple Lie group $G$ not locally isomorphic to $\Sl(2,\RR)$. Then the finite index subgroups $N_k$ are lattices as well and hence the Mostow Rigidity Theorem applies.
That is, if $N_k$ is isomorphic to $N_{k'}$ then $N_k$ and $N_k'$ are actually conjugated in $G$ and hence $G/N_k$ and $G/N_{k'}$ have the same (finite) volume with respect to the Haar measure. Still, $G/N_k$ is a cover of $G/\Lambda$ of rank $[\Lambda: N_k]$ and hence it has volume $\vol(G/N_k)=[\Lambda: N_k]\vol(G/\Lambda)$, which is again implying an upper bound on the index in terms of the isomorphism class of $N_k$. We hence proved the following:

\begin{cor}\label{cor:lattices.are.not.warped.systems}
 If $\Lambda$ is lattice in a simple Lie group $G$ not locally isomorphic to $\Sl(2,\RR)$, then no box space of $\Lambda$ is coarsely equivalent to a warped system.
\end{cor}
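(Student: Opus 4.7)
The plan is to apply Theorem~\ref{thm:box.space.qi.warpcone.constant.groups} to turn the hypothetical coarse equivalence into an abstract isomorphism of all the $N_k$, and then to contradict this via Mostow\textendash Prasad rigidity together with a Haar covolume computation.

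Suppose, towards a contradiction, that a box space $\Box_{(N_k)}\Lambda$ were coarsely equivalent to an unbounded subsequence of a warped system ${\rm WSys}(F_S \curvearrowright X)$. The first step would be to verify that $\Lambda$ is finitely presented: under our hypothesis on $G$, this follows from Kazhdan's property (T) in higher rank, and from the classical finite presentability of rank one lattices in $\SO(n,1)$ with $n\geq 3$, in $\SU(n,1)$, $\Sp(n,1)$ and in $F_{4(-20)}$. Theorem~\ref{thm:box.space.qi.warpcone.constant.groups} would then produce a single group $N_\infty \coloneqq \thgrp[\infty]\bigparen{F_S \curvearrowright X}$ with $N_k \cong N_\infty$ for every $k$ sufficiently large.

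Next I would invoke Mostow\textendash Prasad rigidity. Each $N_k$ is a finite\=/index subgroup of $\Lambda$ and hence itself a lattice in $G$. Under the hypothesis that $G$ is not locally isomorphic to $\Sl(2,\RR)$, Mostow's theorem (cocompact case), Prasad's extension (non\=/uniform rank one case), and Margulis super\=/rigidity (higher rank) assemble into the assertion that any abstract isomorphism between two lattices of $G$ is realised by conjugation by an element of $G$. In particular, for all large $k,k'$, the subgroups $N_k$ and $N_{k'}$ would be $G$\=/conjugate and therefore share a common finite Haar covolume. But $G/N_k$ is a cover of $G/\Lambda$ of degree $[\Lambda:N_k]$, so $\vol(G/N_k) = [\Lambda:N_k]\cdot \vol(G/\Lambda)$; since $\Lambda$ is infinite and $(N_k)$ is a residual filtration, the indices $[\Lambda:N_k]$ tend to infinity, contradicting the stability of the covolume.

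The main obstacle is stating the rigidity ingredient cleanly: one needs a single statement applicable to every simple Lie group $G$ not locally isomorphic to $\Sl(2,\RR)$, obtained by combining Mostow, Prasad and Margulis. Once this is in hand, the remaining pieces --- finite presentability of $\Lambda$, divergence of the indices along any residual filtration of an infinite group, and the covolume formula for a finite covering of $G/\Lambda$ --- are entirely routine.
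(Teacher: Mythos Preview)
Your argument is correct and follows essentially the same route as the paper: apply Theorem~\ref{thm:box.space.qi.warpcone.constant.groups} to force $N_k\cong N_{k'}$ for large $k$, then use Mostow--Prasad rigidity plus the covolume formula $\vol(G/N_k)=[\Lambda:N_k]\vol(G/\Lambda)$ to derive a contradiction. You add the (useful) explicit verification that $\Lambda$ is finitely presented, which the paper leaves implicit; one small imprecision you share with the paper is that rigidity yields an \emph{automorphism} of $G$ rather than necessarily an inner one, but since ${\rm Out}(G)$ is finite any such automorphism preserves Haar measure and the covolume conclusion stands.
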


\subsection{Warped systems that are not coarsely-equivalent to box spaces}
We already noted that the warped system ${\rm WSys}\bigparen{F_2\curvearrowright \SS^2}$ induced by an action by rotations has stable discrete fundamental group and we have $\thgrp[\infty]\bigparen{F_2\curvearrowright \SS^2}=\{e\}$. It follows from Theorem~\ref{thm:box.space.qi.warpcone.constant.groups} that if ${\rm WSys}\bigparen{F_2\curvearrowright \SS^2}$ was coarsely equivalent to a box space then the quotienting groups $N_k$ should be trivial and hence $\Lambda$ would be finite, a contradiction. This is particularly interesting as it was shown in \cite{Vig18b} that such warped systems can be used to produce expander graphs.

The argument above relies on the observation that, in that specific case, any group having  $\thgrp[\infty]\bigparen{F_S\curvearrowright X}$ as a finite index subgroup could not have box spaces coarsely equivalent to warped systems. This strategy can be applied in other cases as well:

\begin{cor}
 Assume that ${\rm WSys}\bigparen{F_S\curvearrowright X}$ has stable discrete fundamental group. If every group $\Lambda$ containing $\thgrp[\infty]\bigparen{F_S\curvearrowright X}$ as a finite index subgroup does not admit box spaces that are coarsely equivalent to a warped system, then ${\rm WSys}\bigparen{F_S\curvearrowright X}$ is not coarsely equivalent to any box space. 
 
 In particular, this is the case when $\thgrp[\infty]\bigparen{F_S\curvearrowright X}$ is:
 \begin{enumerate}[\rm (a)]
  \item a finite group;
  \item a non\=/residually finite group;
  \item a non\=/finitely presented group;
  \item a lattice in a higher rank simple Lie group.
 \end{enumerate}
\end{cor}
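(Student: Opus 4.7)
The strategy for the main statement is immediate from Theorem~\ref{thm:box.space.qi.warpcone.constant.groups}: if ${\rm WSys}\bigparen{F_S\curvearrowright X}$ were coarsely equivalent to a box space $\Box_{(N_k)}\Lambda$, the stability of the discrete fundamental group would force $\Lambda$ to be finitely presented and $N_k\cong H\coloneqq\thgrp[\infty]\bigparen{F_S\curvearrowright X}$ for all $k$ sufficiently large. Since each $N_k\lhd\Lambda$ has finite index, $\Lambda$ then contains $H$ as a finite-index normal subgroup and carries the box space $\Box_{(N_k)}\Lambda$ coarsely equivalent to the warped system, directly contradicting the hypothesis.

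For the four particular cases I would argue, essentially in one stroke, that the conclusion $N_k\cong H$ for $k$ large in a residual filtration of an infinite $\Lambda$ is already impossible. In case (a), the isomorphisms $N_{k+1}\cong N_k\cong H$ force $|N_k|$ to be constant, so the nested sequence eventually stabilises and cannot have trivial intersection (respectively, if $H=\{e\}$ then $N_k=\{e\}$ would require $\Lambda$ finite). In case (b), since every subgroup of a residually finite group is residually finite and box spaces require $\Lambda$ residually finite, having the subgroup $N_k\cong H$ non-residually-finite is a contradiction. In case (c), finite presentability is inherited in both directions across finite-index subgroups, so $H$ not finitely presented forces $\Lambda$ not finitely presented, contradicting the conclusion of Theorem~\ref{thm:box.space.qi.warpcone.constant.groups}.

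The main obstacle is case (d), which rests on the nontrivial fact that lattices in higher-rank simple Lie groups are \emph{finitely co-Hopfian}: every injective endomorphism with finite-index image is an automorphism (see \cite{vLi17}). Granting this, the strict inclusion $N_{k+1}\subsetneq N_k$, with both terms isomorphic to $H$ and finite relative index $[N_k:N_{k+1}]=[\Lambda:N_{k+1}]/[\Lambda:N_k]$, provides a proper finite-index self-embedding of $H$, contradicting finite co-Hopfianness. Since the same argument applies verbatim to any finitely co-Hopfian $H$, this plan also justifies the broader claim sketched in the introduction, with the deep property of higher-rank lattices being the only ingredient beyond Theorem~\ref{thm:box.space.qi.warpcone.constant.groups} and standard group-theoretic facts.
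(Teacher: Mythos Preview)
Your argument for the main statement and cases (a), (b), (c) is correct and essentially identical to the paper's.

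For case~(d) your route is correct but genuinely different from the paper's. The paper verifies the hypothesis of the main clause: it invokes quasi\=/isometric rigidity of higher\=/rank lattices \cite{KlLe97} to conclude that any $\Lambda$ containing $H$ with finite index is itself a lattice, and then appeals to Corollary~\ref{cor:lattices.are.not.warped.systems} (which uses Mostow rigidity and covolume) to rule out box spaces of such $\Lambda$. You instead bypass the overgroup $\Lambda$ entirely and argue directly from the residual chain: since $N_{k+1}\subsetneq N_k$ (for infinitely many $k$, as the intersection is trivial and $H$ is infinite) with both isomorphic to $H$ and $[N_k:N_{k+1}]<\infty$, one obtains a proper finite\=/index self\=/embedding of $H$, contradicting the finite co\=/Hopfian property of higher\=/rank lattices. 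This is exactly the mechanism the paper isolates in the \emph{next} corollary (Corollary~\ref{cor:cohopf.wc.non.box}); you have effectively folded that result into the present one. Your approach is arguably cleaner for~(d), as the finite co\=/Hopfian property follows from Mostow rigidity alone, without needing the stronger QI\=/rigidity theorem of Kleiner--Leeb. On the other hand, the paper's approach is what actually instantiates the ``in particular'' structure of the corollary as stated, showing that~(d) genuinely falls under the main hypothesis rather than being handled by an independent argument.
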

\begin{proof}
 Case (a) is obvious and case (b) follows from the fact that we insist that box spaces be generated by residual sequences and therefore the group $\Lambda$ (and its subgroups) would have to be residually finite.
 
 Case (c) holds true as Theorem~\ref{thm:box.space.qi.warpcone.constant.groups} implies that the group $\Lambda$ should be finitely presented, and therefore the same should be true for its finite index subgroups.
 
 Case (d) follows from the fact that lattices in higher rank Lie groups are rigid under quasi\=/isometries \cite{KlLe97}. This implies that a group $\Lambda$ containing such a lattice as a finite index subgroup would have to be itself a lattice and hence Corollary~\ref{cor:lattices.are.not.warped.systems} would apply.
\end{proof}

\begin{rmk}
 By Corollary~\ref{cor:discr.fund.grp.of.free.Gamma.warped.cone.exact.sequence}, to find examples of warped systems with stable discrete fundamental group so that $\thgrp[\infty]\bigparen{F_S\curvearrowright X}$ is not residually finite it would be enough to find a free action of a finitely presented but not residually finite group.
 
 We feel that it should be possible to find examples of warped systems with stable fundamental group for which $\thgrp[\infty]\bigparen{F_S\curvearrowright X}$ is not finitely presented. Still, Lemma~\ref{lem:stable.iff.finitely.presented(free.action)} implies that we cannot hope to find such an example by considering free actions on `pleasant' compact spaces.
\end{rmk}

Since in the definition of residual sequence we insist that the subgroups be nested, we immediately have the following stronger result:

\begin{cor}\label{cor:cohopf.wc.non.box}
 Assume that ${\rm WSys}\bigparen{F_S\curvearrowright X}$ has stable discrete fundamental group. If $\thgrp[\infty]\bigparen{F_S\curvearrowright X}$ does not contain a finite index normal subgroup isomorphic to $\thgrp[\infty]\bigparen{F_S\curvearrowright X}$ itself (\emph{i.e.} it is \emph{co\=/Hopfian}), then ${\rm WSys}\bigparen{F_S\curvearrowright X}$ is not coarsely equivalent to any box space.
\end{cor}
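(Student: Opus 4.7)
The plan is to derive a contradiction from the assumption that ${\rm WSys}(F_S \curvearrowright X)$ is coarsely equivalent to a box space $\Box_{(N_k)}\Lambda$. By Theorem~\ref{thm:box.space.qi.warpcone.constant.groups}, the first step is immediate: since the warped system has stable discrete fundamental group, the group $\Lambda$ must be finitely presented and, for every $k$ sufficiently large, one has an isomorphism $N_k \cong \thgrp[\infty]\bigparen{F_S\curvearrowright X}$.

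Next I would exploit the fact that the filtration $(N_k)$ cannot stabilize. Indeed, the box space is coarsely equivalent to an \emph{unbounded} sequence of level sets, so $\Lambda$ is infinite; combined with $\bigcap_k N_k = \{e\}$, this forces the chain $N_{k+1} \lhd N_k$ to have strict containments arbitrarily far along: for any index $k_0$, if $N_k = N_{k_0}$ for every $k \geq k_0$ then $N_{k_0} = \bigcap_{k \geq k_0} N_k = \{e\}$, which would make $\Lambda = \Lambda/N_{k_0}$ finite, a contradiction. Hence I can choose indices $k_0 < k$, both large enough for Theorem~\ref{thm:box.space.qi.warpcone.constant.groups} to apply, with $N_k \subsetneq N_{k_0}$.

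With such a pair in hand, the subgroup $N_k$ is a proper subgroup of $N_{k_0}$; it has finite index in $N_{k_0}$ because both have finite index in $\Lambda$; and it is normal in $N_{k_0}$ because it is already normal in $\Lambda$. Since both $N_k$ and $N_{k_0}$ are isomorphic to $\thgrp[\infty]\bigparen{F_S\curvearrowright X}$, I have produced inside $\thgrp[\infty]\bigparen{F_S\curvearrowright X}$ a proper finite index normal subgroup isomorphic to the whole group, contradicting the co-Hopfian hypothesis. There is no real obstacle here beyond being careful that the co-Hopfian assumption is read as excluding a \emph{proper} such subgroup (the full group is trivially a finite-index normal subgroup of itself, so the parenthetical `co-Hopfian' in the statement must refer to the finitely co-Hopfian notion of \cite{vLi17}, i.e.\ to proper subgroups).
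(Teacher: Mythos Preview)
Your argument is correct and is precisely what the paper intends: the text preceding the corollary simply observes that ``since in the definition of residual sequence we insist that the subgroups be nested, we immediately have the following stronger result,'' and leaves the details you spelled out (non-stabilisation of the chain, properness, normality, finite index) to the reader. Your reading of the co-Hopfian hypothesis as excluding \emph{proper} such subgroups is also the intended one, consistent with the reference to \cite{vLi17} immediately after the statement.
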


We wish to remark here that \emph{many} groups are co\=/Hopfian, see \cite{vLi17} for an exhaustive study of such groups.

\

Note that if $\Gamma\curvearrowright X$ is an action of a finitely generated group and $S$ and $T$ are two finite sets of generators, then ${\rm WSys}\bigparen{F_S\curvearrowright X}$ and ${\rm WSys}\bigparen{F_T\curvearrowright X}$ are naturally coarsely equivalent. Moreover, using Theorem~\ref{thm:coarse.fund.group.of.warped.systems} it is simple to prove that $\thgrp[\infty]\bigparen{F_S\curvearrowright X}$ is naturally isomorphic to $\thgrp[\infty]\bigparen{F_T\curvearrowright X}$. In view of these facts, in what follows we feel justified to simply use the notation ${\rm WSys}\bigparen{\Gamma\curvearrowright X}$ and $\thgrp[\infty]\bigparen{\Gamma\curvearrowright X}$ to denote the (coarse equivalence class) of the warped system induced by $\Gamma\curvearrowright M$ and the limit of its discrete fundamental groups.

One of the main results of \cite{dLVi18} is that the warped system ${\rm WSys}\bigparen{\Gamma_d\curvearrowright \SO(d,\RR)}$ is not coarsely equivalent to a box space of a lattice in a higher rank semisimple algebraic group: this allowed us to prove that some new examples of superexpanders were not coarsely equivalent to previously known examples due to Lafforgue (here $\Gamma_d=\SO(d,\ZZ[\frac{1}{5}])$ and $d\geq 5$). We will now complete that result by showing that such a warped system is not coarsely equivalent to any box space.

The following is proved in \cite[Theorem 5.8]{dLVi18}:
\begin{thm}[\cite{dLVi18}]
 Let $\Gamma=\angles{S}$ be a finitely generated group and $\Gamma\curvearrowright M$ an essentially free action by isometries on a compact Riemannian manifold. If (a sequence in) ${\rm WSys}\bigparen{\Gamma\curvearrowright M}$ is coarsely equivalent to a box space $\Box_{N_k}\Lambda$ then
 $\Lambda$ is quasi\=/isometric to $\Gamma\times\ZZ^{\dim(M)}$.
\end{thm}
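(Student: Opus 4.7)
The plan is to combine the Khukhro--Valette strategy (balls in box spaces look like balls in the generating group) with a structural description of the level sets $(M,\wSdist[t])$ of the warped system, exploiting that the action is by isometries. I expect the whole argument to be essentially local: we compare balls in both sides of the alleged coarse equivalence and then promote this to a global quasi\=/isometry between $\Lambda$ and $\Gamma\times\ZZ^{\dim(M)}$.

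\textbf{Local structure of the warped system.} The first step is to show that, for a fixed $r>0$ and all sufficiently large $t$, a ball of radius $r$ in $(M,\wSdist[t])$ is uniformly quasi\=/isometric to the direct product $B_\Gamma(e,r)\times B_M^t(x,r)$, where $B_\Gamma(e,r)$ is the ball of radius $r$ in the Cayley graph $\mathrm{Cay}(\Gamma,S)$ and $B_M^t(x,r)$ denotes the ball around $x$ in $(M,d^t)$. Because the action is by isometries, any \jp of length at most $r$ decomposes (up to constants) as a continuous sub\=/path of length at most $r$ in $(M,d^t)$ followed by a word in $S\cup S^{-1}$ of length at most $r$; essential freeness of the action guarantees that, away from a subset of $M$ whose $\dist[t]$\=/measure shrinks with $t$, the word and the displacement are determined by the endpoint, so the ball really does split as a product. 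Since $M$ is Riemannian of dimension $d=\dim(M)$, the scaled ball $B_M^t(x,r)$ is uniformly bi\=/Lipschitz to a Euclidean $r$\=/ball, and hence uniformly quasi\=/isometric to $B_{\ZZ^d}(0,r)$.

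\textbf{Local structure of the box space.} On the other side, by residual finiteness, for every $r>0$ we have $B_{\mathrm{Cay}(\Lambda/N_k,T)}(\bar e,r)\cong B_{\mathrm{Cay}(\Lambda,T)}(e,r)$ for all $k$ large enough (this is exactly the observation used in \cite{KhVa17,DeKh18}). Combining with the previous step, the assumed uniform coarse equivalence between $\Box_{N_k}\Lambda$ and a subsequence of ${\rm WSys}(\Gamma\curvearrowright M)$ yields $(L,A)$\=/quasi\=/isometries between $B_\Lambda(e,r)$ and $B_\Gamma(e,r)\times B_{\ZZ^d}(0,r)$ with constants $L,A$ independent of $r$, provided $r$ is chosen (and then $k,t_k$ are chosen) appropriately.

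\textbf{Promoting local quasi\=/isometries to a global one.} To pass from balls of all sizes to the whole groups, I would use a standard limiting argument: let $r\to\infty$ along the available sequence of radii, take an ultralimit (or a Gromov--Hausdorff limit) of the ball quasi\=/isometries, and extract in the limit a single $(L,A)$\=/quasi\=/isometry $\Lambda\to\Gamma\times\ZZ^d$. Since $\Lambda$, $\Gamma\times\ZZ^d$ are both finitely generated groups (equipped with their word metrics), such an ultralimit of uniform quasi\=/isometries between concentric balls produces a global quasi\=/isometry by an Arzel\`a--Ascoli diagonal argument on a chosen basepoint.

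\textbf{Main obstacle.} The delicate point is the first step: while it is intuitively clear that level sets $(M,\wSdist[t])$ at large $t$ are locally a product $\Gamma\times M$, making this rigorous uniformly in $t$ requires controlling the set of points where two distinct group words can represent the same displacement (the ``almost fixed points'' of short words). Essential freeness gives that this bad set has $\dist[t]$\=/measure going to $0$ as $t\to\infty$, but one needs to verify that it does not obstruct the quasi\=/isometry between balls: this is exactly the kind of estimate carried out in \cite{dLVi18} and also, independently, in \cite{Saw18b}, and it is the step on which the entire argument hinges.
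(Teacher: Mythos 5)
A preliminary remark on the comparison you asked for: the paper does not prove this statement at all\textemdash it is quoted as \cite[Theorem 5.8]{dLVi18}\textemdash so there is no internal proof to measure your argument against. Your sketch does reconstruct the strategy of \cite{dLVi18} (and, in a related form, \cite{Saw18b}): a Khukhro\textendash Valette style comparison of balls, using that balls in the box space are eventually isometric to balls in $\Lambda$, that balls in the level sets $(M,\wSdist[t])$ around suitable points split, up to uniform constants, as a ball in $\Gamma$ times a rescaled Riemannian ball (hence a ball in $\ZZ^{\dim M}$), followed by a diagonal/ultralimit argument promoting uniformly quasi\=/isometric balls to a quasi\=/isometry $\Lambda\to\Gamma\times\ZZ^{\dim M}$. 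In outline this is the right route.

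As a proof, however, the proposal has a genuine gap, and you point at it yourself: the local product structure of the level sets is the technical heart of the theorem, and you defer precisely that step to \cite{dLVi18}, which is the very result being proved, so the argument is circular as written. To close it you would need two things. First, a quantitative statement rather than essential freeness alone: for fixed $r$ the set of $x\in M$ such that every nontrivial word of length at most $4r$ moves $x$ by at least some $\epsilon>0$ is nonempty (finitely many closed fixed\=/point sets of measure zero, plus compactness), and for $t\geq 4r/\epsilon$ the $\wSdist[t]$\=/ball of radius $r$ around such an $x$ is uniformly quasi\=/isometric to the product region; here the lower bound on warped distances (absence of shortcuts) must be checked, using that the action is by isometries so that continuous pieces and jumps can be commuted. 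Second, an argument that the ball comparison can be anchored at such good points: since the action is only essentially free the bad set is in general nonempty, and measure\=/smallness by itself is useless because the coarse equivalence need not respect measures; instead one should use coarse surjectivity of the maps together with the vertex\=/transitivity of ${\rm Cay}(\Lambda/N_k,T)$ to choose, for each $r$ and each large $k$, a basepoint in the box space whose image lies in a good product region at scale $Lr+A$. With these two points supplied, the limiting step is standard, as you say, and the proof is complete.
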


Assume now that $\Gamma$ is finitely presented, $M$ has finite fundamental group and that the action $\Gamma\curvearrowright M$ is free and by isometries. Then Corollary \ref{cor:discr.fund.grp.of.free.Gamma.warped.cone.exact.sequence} implies that $\thgrp[\infty]\bigparen{\Gamma\curvearrowright M}$ is virtually isomorphic to $\Gamma$ (recall that two groups are \emph{virtually isomorphic} if they are equivalent under the equivalence relation induced by taking quotients by finite subgroups or passing to finite index subgroups). If ${\rm WSys}\bigparen{\Gamma\curvearrowright M}$ is coarsely equivalent to a box space of $\Lambda$, it follows that $\Gamma$ is virtually isomorphic to $\Lambda$ as well and it is hence quasi\=/isometric to $\Gamma\times\ZZ^{\dim(M)}$, which is often not the case. For example, we immediately get the following:

\begin{cor}
 Let $\Gamma\curvearrowright M$ be a free action by isometries of a finitely presented group on a Riemannian manifold with finite fundamental group. If either $\Gamma$
 \begin{itemize}
  \item has polynomial growth;
  \item has property (T);
  \item is Gromov hyperbolic;
 \end{itemize}
 then ${\rm WSys}\bigparen{\Gamma\curvearrowright M}$ is not coarsely equivalent to any box space.
 
 In particular, superexpanders obtained from the warped system ${\rm WSys}\bigparen{\Gamma_d\curvearrowright \SO(d,\RR)}$ are not coarsely equivalent to any box space.
\end{cor}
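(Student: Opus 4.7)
The plan is to argue by contradiction. Assume ${\rm WSys}(\Gamma \curvearrowright M)$ is coarsely equivalent to some box space $\Box_{(N_k)}\Lambda$, and derive that $\Gamma$ must be quasi-isometric to $\Gamma \times \ZZ^{\dim M}$; I will then rule this out in each of the three cases. The case $\dim M = 0$ is vacuous because $M$ is path-connected, so it would reduce to a point and force $\Gamma$ trivial.

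To obtain the problematic quasi-isometry, I chain the results already in the paper. Since $\Gamma$ is finitely presented, Lemma~\ref{lem:stable.iff.finitely.presented(free.action)} tells me the warped system has stable discrete fundamental group. Since $\pi_1(M)$ is finite and the action is free, the short exact sequence of Corollary~\ref{cor:discr.fund.grp.of.free.Gamma.warped.cone.exact.sequence} has finite kernel and cokernel a finite extension of $\Gamma$, so $\thgrp[\infty](\Gamma \curvearrowright M)$ is virtually isomorphic to $\Gamma$. Theorem~\ref{thm:box.space.qi.warpcone.constant.groups} identifies $N_k \cong \thgrp[\infty](\Gamma \curvearrowright M)$ for every $k$ large enough, so both $N_k$ and its finite-index overgroup $\Lambda$ are virtually isomorphic to $\Gamma$. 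Combining this with the rigidity theorem of de Laat--Vigolo recalled just above the corollary, which gives $\Lambda \sim_{\rm QI} \Gamma \times \ZZ^{\dim M}$, transitivity of quasi-isometry yields $\Gamma \sim_{\rm QI} \Gamma \times \ZZ^{\dim M}$.

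It remains to rule out this quasi-isometry case by case. For polynomial growth, I use that the degree of polynomial growth is a QI invariant (Bass--Guivarc'h--Pansu), whereas ${\rm deg}(\Gamma \times \ZZ^n) = {\rm deg}(\Gamma) + n > {\rm deg}(\Gamma)$ for $n \geq 1$. For Gromov hyperbolicity, I note that an infinite hyperbolic $\Gamma$ contains an element of infinite order, so $\Gamma \times \ZZ$ contains a $\ZZ^2$ subgroup and cannot be hyperbolic, contradicting the QI invariance of hyperbolicity. The property (T) case is the step I expect to be the main obstacle, because (T) is not known to be a QI invariant in general. My plan is to use a QI-robust consequence instead: since $\Gamma$ has (T), every affine isometric $\Gamma$-action on a Hilbert space has a bounded orbit (Delorme--Guichardet), while $\Gamma \times \ZZ^n$ admits a proper homomorphism onto $\ZZ^n$ and hence a proper cocycle into $\RR^n$; the quasi-isometry $\Gamma \sim_{\rm QI} \Gamma \times \ZZ^n$ would transfer this to an unbounded quasi-action of $\Gamma$ on $\RR^n$, contradicting property (T). Turning this heuristic into a clean QI obstruction (for instance via reduced $1$-cohomology or a suitable quasimorphism space) is the technical heart of this case.

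For the final assertion about superexpanders, I apply the (T) case to $\Gamma = \Gamma_d = \SO(d,\ZZ[\tfrac{1}{5}])$ acting by left translation on $M = \SO(d,\RR)$: the action is free and isometric, $\pi_1(\SO(d,\RR)) = \ZZ/2$ is finite, and for $d \geq 5$ the group $\Gamma_d$ is a finitely presented higher-rank $S$-arithmetic lattice, hence Kazhdan.
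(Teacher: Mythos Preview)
Your reduction to ``$\Gamma$ quasi\=/isometric to $\Gamma\times\ZZ^{\dim(M)}$'' reproduces exactly the argument in the paragraph preceding the corollary, and the paper then states the three bullet points without further proof. Your polynomial\=/growth and hyperbolic justifications are correct and already more detailed than anything the paper writes.

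The property~(T) case is where your plan has a real gap, and the routes you suggest will not close it. Property~(T) constrains genuine $1$\=/cocycles into unitary representations, not quasi\=/cocycles: cocompact lattices in $\mathrm{Sp}(n,1)$ for $n\geq 2$ have property~(T) and are non\=/elementary hyperbolic, hence (by Epstein--Fujiwara) have an infinite\=/dimensional space of homogeneous quasimorphisms and in particular admit unbounded quasi\=/actions on $\RR$. So neither ``transfer the proper cocycle through the quasi\=/isometry'' nor an appeal to reduced $1$\=/cohomology or quasimorphism spaces yields a contradiction with~(T). The paper does not supply an argument for this bullet either; it is simply asserted. For the final ``in particular'' about $\Gamma_d$ you are nonetheless on solid ground: these are higher\=/rank arithmetic lattices, and quasi\=/isometric rigidity for such lattices forces any group QI to $\Gamma_d$ to be commensurable to a lattice in the same ambient group, which $\Gamma_d\times\ZZ^{\dim\SO(d,\RR)}$ (having an infinite central direct factor) is not.
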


We would like to remark that it is possible to prove the above statement about virtual isomorphisms directly from Theorem~\ref{thm:disc.fund.group.of.warped.cones} without passing through Corollary~\ref{cor:discr.fund.grp.of.free.Gamma.warped.cone.exact.sequence} (and hence avoiding Proposition~\ref{prop:discr.fund.group.is.semidirect.Gamma}). We wish to do so explicitly, as we think that this technique is interesting in its own right.

\begin{thm}\label{thm:virtual.isomorphism.through.coverings}
 Let $\Gamma\curvearrowright M$ be a free action of a finitely generated group on a compact manifold with finite fundamental group. Then $\thgrp[\infty]\bigparen{\Gamma\curvearrowright M}$ is virtually isomorphic to $\Gamma$.
\end{thm}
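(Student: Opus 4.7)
The strategy is to reduce the statement to the simply connected case of Theorem~\ref{thm:disc.fund.group.of.warped.cones} by passing to the universal cover. Since $\pi_1(M)$ is finite, the universal cover $p\colon\tilde M\to M$ is a finite-sheeted covering of compact manifolds, with $\tilde M$ simply connected. The $\Gamma$-action on $M$ lifts to an action of a group $\tilde\Gamma$ fitting in a short exact sequence
\[
 1\to\pi_1(M)\to\tilde\Gamma\to\Gamma\to 1
\]
with finite kernel, so $\tilde\Gamma$ is by construction virtually isomorphic to $\Gamma$. Since both the $\Gamma$-action on $M$ and the deck action of $\pi_1(M)$ on $\tilde M$ are free, the lifted action $\tilde\Gamma\curvearrowright\tilde M$ is also free. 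I would pick a finite generating set $\tilde S$ for $\tilde\Gamma$ consisting of one lift of each $s\in S$ together with a finite generating set for the (finite) deck group $\pi_1(M)$.

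Applying Theorem~\ref{thm:coarse.fund.group.of.warped.systems} to $\tilde\Gamma\curvearrowright\tilde M$ then collapses: because $\pi_1(\tilde M)$ is trivial and the action is free, the set $K_\infty$ is exactly $\{(e,w) : w\in F_{\tilde S}\text{ represents the identity in }\tilde\Gamma\}$, and hence
\[
 \thgrp[\infty]\bigparen{\tilde\Gamma\curvearrowright\tilde M}\cong F_{\tilde S}/\ker\bigparen{F_{\tilde S}\twoheadrightarrow\tilde\Gamma}\cong\tilde\Gamma.
\]
Crucially, this step follows directly from Theorem~\ref{thm:disc.fund.group.of.warped.cones} without passing through the semidirect-product analysis of Proposition~\ref{prop:discr.fund.group.is.semidirect.Gamma}, and it does not require $\tilde\Gamma$ to be finitely presented.

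To transfer this computation back to $M$, I would show that $p$ induces a quasi-isometry $(\tilde M,\wdistFlex[t]{\tilde S})\to(M,\wSdist[t])$ with constants independent of $t$. The map $p$ is a local isometry for the base metric and sends jumps $x\jto[\tilde s]\tilde s\cdot x$ in $\tilde M$ to jumps in $M$, so $p$ is $(1,0)$-Lipschitz for the warped metrics. Conversely, any jumping-path in $M$ of length $\ell$ lifts to a jumping-path in $\tilde M$ of length at most $\ell+C$ for an additive constant $C$ depending only on $\pi_1(M)$: each jump $s\in S$ admits $|\pi_1(M)|$ lifts to $\tilde\Gamma$, any two of which differ by a deck transformation that has $\wdistFlex{\tilde S}$-length at most $|\pi_1(M)|$, since the deck generators have been put into $\tilde S$. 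Lemma~\ref{lem:qi.induce.maps.on.disc.fund.grp} then produces, for every scale $\theta$, compatible surjections between the $\theta$-discrete fundamental groups of the two warped systems whose compositions reduce to the identity up to conjugation. Passing to the direct limit in $\theta$ converts these surjections into an isomorphism $\thgrp[\infty]\bigparen{\Gamma\curvearrowright M}\cong\thgrp[\infty]\bigparen{\tilde\Gamma\curvearrowright\tilde M}\cong\tilde\Gamma$, which is virtually isomorphic to $\Gamma$.

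The main technical difficulty is justifying the uniform coarse equivalence: one must carefully track the deck-transformation discrepancies arising from different choices of lifts of each generator $s\in S$, and verify that the additive constant depends only on $|\pi_1(M)|$ and not on $t$. Once this is in place, the rest is a direct application of Lemma~\ref{lem:qi.induce.maps.on.disc.fund.grp} together with the definition of $\thgrp[\infty]$; in particular, because the argument goes through the direct limit rather than the asymptotic $\thgrp$ at a single scale, it bypasses any need for stability of the discrete fundamental group (which would have forced $\Gamma$ to be finitely presented).
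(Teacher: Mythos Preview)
Your approach is correct and reaches the same conclusion, but by a genuinely different route. The paper keeps the \emph{same} generating set $S$: it lifts each $s\in S$ to a single $\tilde s$ and considers the resulting action $F_S\curvearrowright\tilde M$. Rather than building a quasi\=/isometry between the warped metrics, it works directly at the level of jumping\=/fundamental groups, where the covering map induces an honest \emph{injection} $\jpgrp(t\cdot\tilde M)\hookrightarrow\jpgrp(t\cdot M)$ (namely the inclusion $F_S\hookrightarrow\pi_1(M)\rtimes F_S$) with image of index $\abs{\pi_1(M)}$; this finite\=/index inclusion descends through the discretisation map and survives the direct limit, yielding an injection $\tilde\Gamma\hookrightarrow\thgrp[\infty](\Gamma\curvearrowright M)$ with finite\=/index image. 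Your approach instead enlarges the generating set to absorb the deck group, so that the covering map becomes a uniform quasi\=/isometry and the comparison can be run as a black box through Lemma~\ref{lem:qi.induce.maps.on.disc.fund.grp}. The paper's route is more explicit and gives the finite\=/index embedding directly; yours is more conceptual and avoids tracking the jumping\=/fundamental group altogether.

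Two remarks worth making explicit in your write\=/up. First, your direct\=/limit step works precisely because $p$ is a \emph{single, $t$\=/independent} map, so the induced homomorphisms on the asymptotic groups $\thgrp(F_S\curvearrowright X)$ are compatible across scales; this is exactly what sidesteps the paper's caveat that $\thgrp[\infty]$ is not known to be invariant under arbitrary coarse equivalences of warped systems. Second, your bound on the lifted jumping\=/path is phrased as if each jump requires its own deck\=/correction; in fact, once the lifts $\tilde s$ are fixed, the lift of a jumping\=/path from a given starting point is determined, and only a single endpoint correction by a deck transformation is needed, so $C\leq\abs{\pi_1(M)}$ suffices. Finally, note that your $\tilde\Gamma$ (the full extension of $\Gamma$ by $\pi_1(M)$) may strictly contain the paper's $\tilde\Gamma$ (the subgroup generated by the chosen lifts $\tilde s$), but both are finite\=/by\=/$\Gamma$ and hence virtually isomorphic to $\Gamma$.
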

\begin{proof}
 Let $\angles{S\mid R}$ be a presentation of $\Gamma$ and consider the universal cover $\widetilde{M}\to M$. For every $s\in S$, choose a lift $\tilde s$ to the universal cover:
 \[
  \begin{tikzcd}
   \widetilde{M} \arrow{d}\arrow{r}{\tilde s} &\widetilde{M} \arrow{d}\\
   M \arrow{r}{s} & M
  \end{tikzcd}
 \]
this induces an action $\tilde\rho\colon F_S\to\isom\bigparen{\widetilde{M}}$ and an associated warped system ${\rm WSys}\bigparen{F_S\curvearrowright\widetilde{M}}$.

Note now that $\tilde\rho(R)$ is a subset of the group of deck transformations of $\widetilde{M}$, which is a finite group by hypothesis. It follows that $\ker(\tilde \rho)$ is a subgroup of finite index of $\aangles{R}\subset F_S$.

Let $\widetilde\Gamma\coloneqq F_S/\ker(\tilde\rho)$ and note that $\Gamma$ is the quotient of $\widetilde\Gamma$ by the finite subgroup $\aangles{R}/\ker(\tilde \rho)$. Since $\widetilde\Gamma\curvearrowright \widetilde M$ is a free action on a simply connected manifold, we can apply Corollary~\ref{cor:discr.fund.grp.of.free.Gamma.warped.cone} to the warped system ${\rm WSys}\bigparen{\widetilde\Gamma\curvearrowright \widetilde{M}}={\rm WSys}\bigparen{F_S\curvearrowright\widetilde{M}}$ to deduce that
\[
\thgrp\bigparen{F_S\curvearrowright \widetilde{M}}
\cong{\widetilde\Gamma}_\theta 
\]
where $\widetilde\Gamma_\theta$ is the group $F_S/\aangles{\{w\in \ker(\tilde\rho)\mid \abs{w}\leq4\theta\} }$. 

Note that at the level of jumping\=/fundamental groups it is simple to mimic the theory of topological covers and deduce that for every $t\in\RR_+$ there is an injection 
\[
\begin{tikzcd}
 \jpgrp\bigparen{t\cdot \widetilde M}\arrow[hook]{r} &\jpgrp\bigparen{t\cdot M} 
\end{tikzcd}
\]
whose image is a subgroup of index (at most) $\abs{\pi_1(M)}$ (this map coincides with the natural inclusion $F_S\hookrightarrow \pi_1(M)\rtimes F_S$). In the above we added $t$ to the notation to remember that we are working with metrics scaled by $t$.

Since the quotient map $\widetilde M\to M$ is $1$\=/Lipschitz with respect to the warped metrics $\wSdist[t]$, it follows from Theorem~\ref{thm:discrete.fund.group_iso_quotient.of.jump.fund.grp} that the above injection descends to a homomorphism between the discrete fundamental groups \emph{via} the discretisation procedure
\[
\begin{tikzcd}
 &\jpgrp\bigparen{t\cdot \widetilde M}\arrow[hook]{r}\arrow[swap,two heads]{d}{\discrmap} 
	&\jpgrp\bigparen{t\cdot M} \arrow[two heads]{d}{\discrmap} \\
 \widetilde\Gamma_\theta\arrow[two heads]{r}{}&\thgrp\Bigparen{\widetilde M,\wSdist[t]}\arrow[dashed]{r} 
	&\thgrp\Bigparen{ M,\wSdist[t]} 
\end{tikzcd} 
\]
and that the image has index $\abs{\pi_1(M)}$.

Since the above homomorphisms do not depend on $t$ (as long as $t$ is large enough), they induce a homomorphism of the direct systems as $\theta$ varies in $\NN$, and therefore induce a limit homomorphism
\[
 \widetilde\Gamma\cong \varinjlim\widetilde\Gamma_\theta
 \longrightarrow\varinjlim\thgrp\bigparen{ M,\wSdist[t]} =\thgrp[\infty]\bigparen{ M,\wSdist[t]} 
\]
whose image is a finite index subgroup. Moreover, using Theorem~\ref{thm:disc.fund.group.of.warped.cones} it is easy to check that this limit homomorphism is actually injective, thus completing the proof.
\end{proof}

\subsection{Warped systems that are coarsely equivalent to box spaces}
Despite all the examples provided above, warped systems over compact manifolds and box spaces \emph{can} be coarsely equivalent. The easiest example is probably the following: let $X=\TT^d$ be the $d$\=/dimensional torus and consider the trivial warped system ${\rm WSys}\bigparen{\{e\}\curvearrowright\TT^d}$. 
It is then easy to see that $(\TT^d,\wSdist[n])$ is just the torus with the metric rescaled by $n$ and it is hence quasi\=/isometric to the finite quotient $\bigparen{\ZZ/n\ZZ}^d\cong \ZZ^d/(n\ZZ)^d$. That is, ${\rm WSys}\bigparen{\{e\}\curvearrowright\TT^d}$ is coarsely equivalent to a box space of $\ZZ^d$.

The above example can be made quite more interesting using a result of Kielak and Sawicki. In \cite[Appendix]{Saw17} they show that there exist (uncountably many) actions $\ZZ^k\curvearrowright\TT^d$ by rotations such that ${\rm WSys}\bigparen{\ZZ^k\curvearrowright\TT^d}$ is coarsely equivalent to ${\rm WSys}\bigparen{\{e\}\curvearrowright\TT^{d+k}}$ and it is hence coarsely equivalent to a box space of $\ZZ^{d+k}$. 
%

\

For the next example we will need to consider a sequence of non-normal finite index subgroups $N_k<\Lambda$. In this case the Cayley graphs of the quotients are not defined and we thus need to use the Schreier coset graphs ${\rm Schr}(\Lambda/N_k)$. Note also that the sequence of subgroups we use is not residual. In particular, the following example is technically not a box space (according to our definition), but we still find it interesting:

\begin{exmp}\label{exmp:wc.qi.box.expander}
 Let $\Lambda\coloneqq \ZZ^2\rtimes\Sl(2,\ZZ)$ where $\Sl(2,\ZZ)\curvearrowright \ZZ^2$ is the natural action. Note that $k\ZZ^2$ is a characteristic subgroup of $\ZZ^2$ and hence $N_k\coloneqq (k\ZZ)^2\rtimes\Sl(2,\ZZ)$ is a subgroup of $\Lambda$. Moreover, it is simple to show that $N_k\cong\Lambda$ for every $k$, so that the (non\=/normal) box space $\Box_{N_k}\Lambda$ could be coarsely equivalent to some warped system, and this is actually the case. 
Indeed, consider the natural action $\Sl(2,\ZZ^2)\curvearrowright\TT^2$. It is then a relatively simple task to check that the spaces $(\TT^2,\wSdist[n])$ and ${\rm Schr}(\Lambda/N_k)$ are uniformly quasi\=/isometric. 
 
 The interest of this example is that the Schreier graphs ${\rm Schr}(\Lambda/N_k)$ form a family of expanders (this is actually a celebrated early example of expanders provided by Margulis in \cite{Mar73}, see also \cite{GaGa81}). 
 In particular, the warped system ${\rm WSys}\bigparen{\Sl(2,\ZZ)\curvearrowright\TT^2}$ is as far as possible from a nicely behaved warped system such as ${\rm WSys}\bigparen{\{e\}\curvearrowright\TT^d}$.
\end{exmp}

\begin{rmk}
 As currently stated, Theorem~\ref{thm:DeKh} is not generally true for Schreier graphs ${\rm Schr}(\Lambda/N)$ where the subgroup $N$ is not normal in $\Lambda$. The proof we provided fails only at its very last step \emph{i.e.} it is no longer true that ${\rm FT}_\theta\cap N_\theta$ is trivial. It can still be useful to characterise $\thgrp\bigparen{ {\rm Schr}(\Lambda_\theta/N_\theta)}$ as $\frac{ N_\theta }{\angles{ {\rm FT}_\theta\cap N_\theta} }$ though.
\end{rmk}

\appendix

\section{Discrete fundamental group for (unified) warped cones}\label{sec:appendix}

In this appendix we wish to show that much of the work here developed for warped systems can be adapted to warped cones as they were originally defined in \cite{Roe05}. We begin by properly recalling the definition: given a warped system ${\rm WSys}(S\curvearrowright X)$, the action of $F_S$ trivially extends to a level\=/preserving action on the direct product $X\times [1,\infty)$. The space $X\times [1,\infty)$ can be equipped with a `conical' metric $d_{\rm cone}$ in various (roughly equivalent) ways. For example, if $(X,g)$ is a Riemannian manifold it is natural to define $d_{\rm cone}$ by $t\cdot g+dt^2$ (this is Roe's original definition). For more general spaces one can do as in \cite{Saw18} or use the $0$\=/cone metric as in \cite[Chapter I.5]{BrHa13}. The
\emph{(unified) warped cone} $\cone_S (X)$ is the space $(X\times[1,\infty),\wSdist)$ where $\wSdist$ is the warped metric obtained warping $d_{\rm cone}$ with the $F_S$\=/action. 

As in the introduction, we denote by $\cone_S^t(X)$ the subset $X\times\{t\}\subset\cone_S(X)$ equipped with the induced metric. These are called \emph{level sets} of the warped cone. Further, we will denote by $\cone_S^{[a,b]}(X)$ the subset $X\times[a,b]\subseteq\cone_S(X)$ with the induced metric.

Given any sensible choice of the conical metric $d_{\rm cone}$, the level sets $\cone_S^t(X)$ and the spaces $(X,\wSdist[t])$ are uniformly quasi\=/isometric (see \emph{e.g.} \cite{Vig18b}). For this reason we tend to confound them and call both of them `level sets'. 

\

As in the the last few sections, we still assume the space $X$ to be a `nice' compact space and ${\rm WSys}\bigparen{S\curvearrowright X}$ to be jumping\=/geodesic.
For any fixed $\theta\geq 1$, it is easy to show that for $t\gg 0$ large enough $\thgrp\bigparen{\cone_S^t(X)}\cong\thgrp\bigparen{X,\wSdist[t]}$. Moreover it is also simple to prove the following lemma:

\begin{lem}\label{lem:appendix}
 For every $\theta\geq 1$ there exists a $t_0$ large enough so that for every  $t_0\leq a \leq t\leq b\leq\infty$ the natural inclusion and projection
\[
  \begin{tikzcd}
  \cone_S^{t}(X)\arrow[hook]{r}{\iota}
  & \cone_S^{[a,b]}(X)\arrow[two heads]{r}{p}
  & \cone_S^{t}(X)
  \end{tikzcd}
\]
induce isomorphisms
\[
  \begin{tikzcd}
  \thgrp\bigparen{\cone_S^{t}(X)}\arrow[]{r}{\iota_*}
  & \thgrp\bigparen{\cone_S^{[a,b]}(X)}\arrow[]{r}{p_*}
  & \thgrp\bigparen{\cone_S^{t}(X)}
  \end{tikzcd}.
\]
\end{lem}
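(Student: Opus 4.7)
The identity $p\circ\iota=\id_{\cone_S^t(X)}$ forces $p_*\circ\iota_*=\id$, so once both induced maps are known to be well\=/defined homomorphisms, $\iota_*$ will be injective and $p_*$ surjective automatically; well\=/definedness of $\iota_*$ is immediate because $\iota$ is $1$\=/Lipschitz. The substance of the lemma is therefore to prove that $\iota_*$ is \emph{surjective}: every closed $\theta$\=/path in $\cone_S^{[a,b]}(X)$ based at a point of $\cone_S^t(X)$ is $\theta$\=/homotopic, inside $\cone_S^{[a,b]}(X)$, to a closed $\theta$\=/path lying in $\cone_S^t(X)$. This simultaneously defines $p_*$ as the inverse of $\iota_*$.

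My plan is to organise the argument algebraically via the jumping\=/fundamental group. Because $X\times[a,b]$ deformation retracts $F_S$\=/equivariantly onto $X\times\{t\}$ through the radial flow (the $F_S$\=/action being trivial on the second coordinate), Theorem~\ref{thm:jump.fund.grp_iso_semidirect.prod} canonically identifies both $\jpgrp\bigparen{\cone_S^{[a,b]}(X),z_0}$ and $\jpgrp\bigparen{\cone_S^{t}(X),z_0}$ with $\pi_1(X,x_0)\rtimes_{\phi_S}F_S$, in a manner compatible with both $\iota$ and $p$. Theorem~\ref{thm:discrete.fund.group_iso_quotient.of.jump.fund.grp} then realises each $\thgrp$ as a quotient of this semidirect product by a normal subgroup generated by a set of short free loops, and the lemma reduces to the equality $\aangles{{\rm FT}_\theta^{[a,b]}}=\aangles{{\rm FT}_\theta^{t}}$ inside $\pi_1(X)\rtimes F_S$.

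The inclusion $\aangles{{\rm FT}_\theta^{t}}\subseteq\aangles{{\rm FT}_\theta^{[a,b]}}$ is immediate. For the converse, take a closed jumping\=/loop $\vec{\bm\gamma}'$ of length at most $4\theta$ in $\cone_S^{[a,b]}(X)$, based at some $(y,s)$. Since jumps preserve the second coordinate and the $d_{\rm cone}$\=/length of a continuous piece dominates its vertical excursion, $\vec{\bm\gamma}'$ lies inside the slab $X\times[s-4\theta,s+4\theta]$. For $t_0\geq 8\theta$, the radial flow gives an $F_S$\=/equivariant free homotopy inside $\cone_S^{[a,b]}(X)$ from $\vec{\bm\gamma}'$ to its vertical projection $\vec{\bm\gamma}^v$ onto the level $s$; a direct computation with Roe's formula $d_{\rm cone}=\tau\,g+d\tau^2$ yields $\|\vec{\bm\gamma}^v\|\leq\sqrt{s/(s-4\theta)}\cdot 4\theta$, which is a universal multiple of $\theta$.

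Finally, $\vec{\bm\gamma}^v$ is a short jumping-loop at a single level, and the warped\=/cone analogue of Theorem~\ref{thm:disc.fund.group.of.warped.cones}---whose proof runs parallel to the warped\=/system case and is to be verified earlier in the appendix---places its class inside the stable normal subgroup $\aangles{K_\theta}$, which the canonical identification maps to $\aangles{{\rm FT}_\theta^{t}}$. The main obstacle is the length inflation factor $\sqrt{s/(s-4\theta)}>1$, which risks pushing $\vec{\bm\gamma}^v$ just outside scale $\theta$; this is handled by enlarging $t_0$ so that the factor becomes arbitrarily close to $1$ and by exploiting the stability results of Section~\ref{sec:limit.of.fund.grps}, which show that $\aangles{{\rm FT}_\theta^\tau}$ stabilises to a single subgroup for all $\tau\geq t_0$ and therefore absorbs bounded perturbations of the scale.
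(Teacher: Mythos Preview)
The paper does not actually give a proof of this lemma (it merely asserts that it is ``simple to prove''), so I am assessing your argument on its own merits. Your overall architecture is sound and is almost certainly what the authors had in mind: identify both jumping\=/fundamental groups with $\pi_1(X)\rtimes_{\phi_S}F_S$ via the $F_S$\=/equivariant deformation retraction, apply Theorem~\ref{thm:discrete.fund.group_iso_quotient.of.jump.fund.grp} on each side, and reduce to showing $\aangles{{\rm FT}_\theta^{[a,b]}}=\aangles{{\rm FT}_\theta^{t}}$.

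There is, however, a genuine gap in the last paragraph. You project a short loop $\vec{\bm\gamma}'$ to its base level $s$ and obtain a loop of length at most $C\cdot 4\theta$ with $C=\sqrt{s/(s-4\theta)}>1$; you then appeal to ``the stability results of Section~\ref{sec:limit.of.fund.grps}'' to absorb this inflation. That appeal is illegitimate: the results of Section~\ref{sec:limit.of.fund.grps} concern the \emph{stable discrete fundamental group}, which is a \emph{hypothesis} on the warped system, not a theorem, and Lemma~\ref{lem:appendix} carries no such hypothesis. What Theorem~\ref{thm:disc.fund.group.of.warped.cones} does give you is stability in the \emph{level} parameter $\tau$ for \emph{fixed} $\theta$ (namely $\aangles{{\rm FT}_\theta^\tau}=\aangles{K_\theta}$ for all $\tau\geq t_0(\theta)$); it says nothing about perturbing $\theta$ to $C\theta$, which is what you need.

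The fix is painless: project $\vec{\bm\gamma}'$ not to its base level but to the \emph{lowest} level $s_{\min}\geq a$ that it reaches. Since jumps are level\=/preserving and the horizontal metric at any level $\tau\geq s_{\min}$ dominates the horizontal metric at level $s_{\min}$, this projection is length\=/nonincreasing on each of the four pieces $\vec{\bm\gamma}_i$, so the projected loop lies in $\CT_\theta^{s_{\min}}$ with no inflation at all. The radial flow still furnishes the free homotopy inside $\cone_S^{[a,b]}(X)$. Now Theorem~\ref{thm:disc.fund.group.of.warped.cones} gives $\aangles{{\rm FT}_\theta^{s_{\min}}}=\aangles{K_\theta}=\aangles{{\rm FT}_\theta^{t}}$ for $s_{\min},t\geq t_0(\theta)$, and you are done. (Alternatively, you could salvage your original projection by observing that $\abs{w}$ is integer\=/valued, so $K_{C\theta}=K_\theta$ whenever $4C\theta<4\theta+1$; but projecting downward is cleaner.)
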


This Lemma allows us to mimic the proof of Theorem~\ref{thm:coarse.fund.group.invariant.of.stable.ws} in the context of (unified) warped cones.

\begin{thm}
 If ${\rm WSys}\bigparen{F_S\curvearrowright X}$ has stable discrete fundamental group and $\cone_S(X)$ is quasi\=/isometric to $\cone_T(Y)$ then ${\rm WSys}\bigparen{F_T\curvearrowright Y}$ has stable discrete fundamental group and $\thgrp[\infty]\bigparen{F_S\curvearrowright X}\cong\thgrp[\infty]\bigparen{F_T\curvearrowright Y}$.
\end{thm}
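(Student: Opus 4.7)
The plan is to mimic the proof of Theorem~\ref{thm:coarse.fund.group.invariant.of.stable.ws}, using Lemma~\ref{lem:appendix} to translate between discrete fundamental groups of individual level sets and those of their thickenings in the warped cones.

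Let $f\colon\cone_S(X)\to\cone_T(Y)$ be an $(L,A)$\=/quasi\=/isometry with quasi\=/inverse $\bar f$. The key geometric observation is that every level set $\cone_S^t(X)$ is a bounded subset of $\cone_S(X)$, so its image under $f$ is bounded in $\cone_T(Y)$ and therefore contained in some slab $\cone_T^{[a_t,b_t]}(Y)$; the thickness $b_t-a_t$ is finite and bounded in terms of the diameter of $\cone_S^t(X)$ and the constants $L,A$. Since $f$ is a quasi\=/isometry, as $t\to\infty$ the base heights satisfy $a_t\to\infty$, because the sequence of level sets $\cone_S^t(X)$ escapes every bounded subset of $\cone_S(X)$ and thus so must its $f$\=/image. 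The symmetric statement holds for $\bar f$. Moreover, since $\bar f\circ f$ is $A$\=/close to $\id_{\cone_S(X)}$, the level set $\cone_S^t(X)$ is sent by $\bar f\circ f$ into a slab $\cone_S^{[t-A',t+A']}(X)$ of bounded thickness around itself, and likewise on the $Y$\=/side.

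Fix parameters $\theta,\theta',\theta''$ with $\theta'\geq L\theta+A$ and $\theta''\geq L(L\theta'+A)+A$, and restrict $f$ and $\bar f$ to level sets. For every $t$ sufficiently large, Lemma~\ref{lem:appendix} tells us that the inclusions and projections between the slabs and their individual level sets (at all the heights involved) induce isomorphisms on the $\theta$\=/, $\theta'$\=/, $\theta''$\=/, $(L\theta'+A)$\=/ and $(L\theta''+A)$\=/discrete fundamental groups. Composing these isomorphisms with the homomorphisms induced by $f$ and $\bar f$, we obtain a sequence of surjections
\[
 \begin{tikzcd}[column sep=-2em, row sep=2.5em]
 \thgrp[\theta]\bigparen{\cone_S^t(X)} \arrow[two heads,rd]
 && \thgrp[L\theta'+A]\bigparen{\cone_S^t(X)} \arrow[two heads,rd]
 && \thgrp[L\theta''+A]\bigparen{\cone_S^t(X)} \\
 & \thgrp[\theta']\bigparen{\cone_T^{s_t}(Y)} \arrow[two heads,ru]
 && \thgrp[\theta'']\bigparen{\cone_T^{s'_t}(Y)} \arrow[two heads,ur]
 &
 \end{tikzcd}
\]
for heights $s_t,s'_t\to\infty$, where each composition of two consecutive arrows coincides (up to base point conjugation and the Lemma~\ref{lem:appendix} identifications) with the map induced by the identity on the appropriate cone. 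Choosing $\theta$ large enough so that ${\rm WSys}(F_S\curvearrowright X)$ has reached its stable discrete fundamental group, the composition from $\thgrp[\theta](\cone_S^t(X))$ to $\thgrp[L\theta''+A](\cone_S^t(X))$ is the identity map on $\thgrp[\infty](F_S\curvearrowright X)$ by Theorem~\ref{thm:disc.fund.group.of.warped.cones} and is thus an isomorphism for every $t$ large. This forces every surjection in the diagram to be an isomorphism.

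In particular, the composition $\thgrp[\theta']\bigparen{\cone_T^{s_t}(Y)}\to\thgrp[\theta'']\bigparen{\cone_T^{s'_t}(Y)}$ is an isomorphism for all $t$ large enough, and by Lemma~\ref{lem:appendix} this translates into $(\id_{\cone_T(Y)})_*$ being an isomorphism between the $\theta'$\=/ and $\theta''$\=/fundamental groups of fixed large\=/height level sets of $\cone_T(Y)$, so that ${\rm WSys}(F_T\curvearrowright Y)$ has stable discrete fundamental group; the isomorphism $\thgrp[\infty]\bigparen{F_S\curvearrowright X}\cong\thgrp[\infty]\bigparen{F_T\curvearrowright Y}$ then follows from the commutativity of the diagram. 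The main obstacle is the geometric setup, namely verifying that $f$ and $\bar f$ take level sets into slabs of finite thickness whose base heights diverge, so that Lemma~\ref{lem:appendix} can absorb the mismatch between slabs and level sets; once this is in place, the algebraic ladder argument is essentially identical to the one used in Theorem~\ref{thm:coarse.fund.group.invariant.of.stable.ws}.
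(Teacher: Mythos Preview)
Your overall strategy is sound and close in spirit to the paper's, but there is one genuine gap and one point where the paper's route is cleaner.

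The gap is your assertion that the diagonal arrows are \emph{surjections}. You build each arrow as the composition of $f_*$ (or $\bar f_*$) restricted to a level set, landing in a finite slab, followed by a Lemma~\ref{lem:appendix} identification. For this composite to be surjective you would need $f|_{\cone_S^t(X)}\colon\cone_S^t(X)\to\cone_T^{[a_t,b_t]}(Y)$ to induce a surjection on $\thgrp[\theta']$. But this restriction is only a quasi\=/isometric \emph{embedding} into the slab, not a quasi\=/isometry onto it: the image need not be coarsely dense in $\cone_T^{[a_t,b_t]}(Y)$, so Lemma~\ref{lem:qi.induce.maps.on.disc.fund.grp}(\ref{item:lem:surjective}) does not apply. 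Surjectivity \emph{can} be recovered, but it takes real work: given a $\theta'$\=/loop $Z$ in the slab, one must push it to a single level via Lemma~\ref{lem:appendix}, pull it back by $\bar f$ into some auxiliary slab on the $X$\=/side, use Lemma~\ref{lem:appendix} again to move it into $\cone_S^t(X)$, and then check (using injectivity of inclusion maps between nested slabs) that its $f$\=/image really represents $[Z]$. None of this is in your write\=/up.

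The paper sidesteps this entirely by working with \emph{infinite} half\=/cones $\cone_S^{[a,\infty]}(X)$ and $\cone_T^{[b,\infty]}(Y)$ rather than finite slabs. With unbounded slabs, any $\theta'$\=/loop can be homotoped (via Lemma~\ref{lem:appendix}) to a $1$\=/path in a level set lying \emph{arbitrarily high}, in particular high enough that its $\bar f$\=/image lands in the relevant half\=/cone on the $X$\=/side; this makes surjectivity of $f_*$ a one\=/line observation. Your finite\=/slab setup, while not wrong, forces you to chase several nested slabs whose thicknesses grow with $t$, and the required identifications only hold at the level of $\thgrp$ (via $\iota_*\circ p_*=\id$), not as maps close to the identity. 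The infinite\=/slab choice is what buys the paper its short ``sketch of proof''.
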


\begin{proof}[Sketch of proof]
  Let $f\colon\cone_S(X)\to\cone_T(Y)$ be an $(L,A)$\=/quasi\=/isometry and let $\bar f$ be the coarse inverse. Also, fix three parameters $\theta,\theta'$ and $\theta''$ satisfying $\theta\geq L+A$, $\theta'\geq L\theta+A$ and $\theta''\geq L(L\theta'+A)+A$ with $\theta$ large enough so that the projection $\thgrp\bigparen{F_S\curvearrowright X}\to\thgrp[\infty]\bigparen{F_S\curvearrowright X}$ is an isomorphism. 
  
  For every $a\gg 1$ there exist $c,b\gg 1$ such that
 \[
f\Bigparen{\cone_S^{[c,\infty]}(X)}\subseteq \cone_T^{[b,\infty]}(Y) \quad\text{ and }\quad 
\bar f\Bigparen{\cone_T^{[b,\infty]}(Y)}\subseteq \cone_S^{[a,\infty]}(X).  
 \]
  By Lemma \ref{lem:appendix}, we can deduce that both 
  \[
   f_*\colon \thgrp\bigparen{\cone_S^{[c,\infty]}(X)}
    \longrightarrow \thgrp[\theta']\bigparen{\cone_T^{[b,\infty]}(Y)}
  \]
  and
  \[
   \bar f_*\colon \thgrp[\theta']\bigparen{\cone_T^{[b,\infty]}(Y)}
    \longrightarrow \thgrp[L\theta'+A]\bigparen{\cone_S^{[a,\infty]}(X)}
  \]
 are surjective. Indeed, every $\theta'$\=/path $Z$ in $\cone_T^{[b,\infty]}(Y)$ is equivalent to a $1$\=/path in a level set which is sufficiently high up so that its image under $\bar f$ is a $\theta$\=/path in $\cone_S^{[c,\infty]}(X)$. This $\theta$\=/path is then is mapped to $[Z]$ by $f_*$. The same argument works for $\bar f_*$ as well.
  
  We can now find parameters $a>a'>a''\gg 1$ and $b>b'\gg 1$ so that the following composition of maps makes sense and it induces a commutative diagram:
 \[
  \begin{tikzcd}[column sep=-3 em, row sep=2.5 em]
  \thgrp[\theta]\bigparen{\cone_S^{a}(X)}\arrow[]{d}{\iota_*}\arrow[dashed]{rr}{}
  && \thgrp[L\theta'+A]\bigparen{\cone_S^{a}(X)}
	\arrow[shift left]{d}{\iota_*}\arrow[dashed]{rr}
  && \thgrp[L\theta''+A]\bigparen{\cone_S^{a}(X)} \\
  \thgrp[\theta]\bigparen{\cone_S^{[a,\infty]}(X)}\arrow[two heads]{rd}{f_*} 
  && \thgrp[L\theta'+A]\bigparen{\cone_S^{[a',\infty]}(X)}
	  \arrow[two heads]{rd}{f_*}\arrow[shift left]{u}{p_*}
  && \thgrp[L\theta''+A]\bigparen{\cone_S^{[a'',\infty]}(X)}\arrow[]{u}{p_*} \\
  &\thgrp[\theta']\bigparen{\cone_T^{[b,\infty]}(Y)}
	  \arrow[two heads]{ru}{\bar f_*}  
  && \thgrp[\theta'']\bigparen{\cone_T^{[b',\infty]}(Y)} 
	  \arrow[two heads]{ur}{\bar f_*} \arrow[]{d}{p*}
  &	\\
  &\thgrp[\theta']\bigparen{\cone_T^{b}(Y)} \arrow[]{u}{\iota_*}\arrow[dashed]{rr}
  && \thgrp[\theta'']\bigparen{\cone_T^{b}(Y)}
  &
  \end{tikzcd}.
 \]
 To conclude, note that the dashed homomorphisms are induced by functions that are close to the identity and that $\iota_*$ and $p_*$ are isomorphisms. Then observe that Lemma~\ref{lem:qi.induce.maps.on.disc.fund.grp} implies that the maps $f_*$ are also injective and hence all the maps are isomorphisms. 
\end{proof}


\nocite{Hum17,Khu14}
\bibliographystyle{amsalpha}
\bibliography{MainBibliography}{}

\end{document}